\theoremstyle{definition}
\newtheorem{thm}{Theorem}[section]
\newtheorem{Def}[thm]{Definition}
\newtheorem{pro}[thm]{Proposition}
\newtheorem{cor}[thm]{Corollary}
\newtheorem{lem}[thm]{Lemma}
\newtheorem*{mainthm}{Theorem}
\theoremstyle{definition}
\begin{document}
\title[Finite abelian group actions on the Razak-Jacelon algebra]{Approximate representability of 
finite abelian group actions on the Razak-Jacelon algebra}
\author{Norio Nawata}
\address{Department of Pure and Applied Mathematics, Graduate School of Information Science 
and Technology, Osaka University, Yamadaoka 1-5, Suita, Osaka 565-0871, Japan}
\email{nawata@ist.osaka-u.ac.jp}
\keywords{Razak-Jacelon algebra; Approximate representability; Rohlin property; 
Kirchberg's central sequence C$^*$-algebra.}
\subjclass[2020]{Primary 46L55, Secondary 46L35; 46L40}
\thanks{This work was supported by JSPS KAKENHI Grant Number 20K03630}

\begin{abstract}
Let $A$ be a simple separable nuclear monotracial C$^*$-algebra, and let $\alpha$ be 
an  outer action of a finite abelian group $\Gamma$ on $A$. 
In this paper, we show that $\alpha\otimes \mathrm{id}_{\mathcal{W}}$ on $A\otimes\mathcal{W}$ 
is approximately representable if and only if the characteristic invariant of $\tilde{\alpha}$ is trivial, where 
$\mathcal{W}$ is the Razak-Jacelon algebra and $\tilde{\alpha}$ is the induced action 
on the injective II$_1$ factor $\pi_{\tau_{A}}(A)^{''}$.  As an application of this result, we  
classify such actions up to conjugacy and cocycle conjugacy. 
We also construct the model actions. 
\end{abstract}
\maketitle

\section*{Introduction}

In \cite{Jones}, Jones gave a complete classification of finite group actions on the injective 
II$_1$ factor up to conjugacy. This can be regarded as a generalization of 
Connes' classification \cite{C3} of periodic automorphisms of the injective II$_1$ factor. 
In this paper, we study a C$^*$-analog of these results. 

There exist some difficulties for the classification of (amenable) group actions on 
``classifiable'' C$^*$-algebras 
because of $K$-theoretical obstructions. We refer the reader to \cite{I} for details. 
In spite of these difficulties, Gabe and Szab\'o classified outer actions of countable discrete 
amenable groups on Kirchberg algebras up to cocycle conjugacy by  equivariant $KK$-theory in 
\cite{GS}. This classification can be regarded as a C$^*$-analog in Kirchberg algebras of Ocneanu's 
classification theorem \cite{Oc}. 
For stably finite classifiable C$^*$-algebras, such a classification is an interesting open problem. 

The Razak-Jacelon algebra $\mathcal{W}$ (\cite{J} and \cite{Raz}) is the simple separable 
nuclear monotracial $\mathcal{Z}$-stable C$^*$-algebra which is $KK$-equivalent to $\{0\}$. 
We regard $\mathcal{W}$ as a monotracial analog of the Cuntz algebra $\mathcal{O}_2$. 
Indeed, if $A$ is a simple separable nuclear monotracial C$^*$-algebra, then $A\otimes\mathcal{W}$ 
is isomorphic to $\mathcal{W}$ by classification results in \cite{CE} and \cite{EGLN} 
(see also \cite{Na4}). Let $\alpha$ be an action on a simple separable nuclear monotracial 
C$^*$-algebra $A$. Then the  action $\alpha\otimes\mathrm{id}_{\mathcal{W}}$ on 
$A\otimes\mathcal{W}$ is an action on $\mathcal{W}$. 
We call such a tensor product type action a \textit{$\mathcal{W}$-type action}. 
Note that there exist no 
$K$-theoretical obstructions for $\mathcal{W}$-type actions. 
Therefore we can recognize difficulties due to stably finiteness by studying 
$\mathcal{W}$-type actions. 
In \cite{Na5}, the author showed that if $\alpha$ is a strongly outer action of 
a countable discrete amenable group, then $\mathcal{W}$-type actions are unique up to 
cocycle conjugacy. 
(Note that an action $\alpha$ of a discrete countable group on a simple separable 
monotracial C$^*$-algebra $A$ is \textit{strongly outer} if the induce action $\tilde{\alpha}$ by 
$\alpha$ on $\pi_{\tau_A}(A)^{''}$ is outer, where $\pi_{\tau_A}$ is 
the Gelfand-Naimark-Segal (GNS) representation associated with the unique tracial state $\tau_A$ 
on $A$.) 
This result can be regarded as a monotracial analog of Szab\'o's 
equivariant Kirchberg-Phillips type absorption theorem for $\mathcal{O}_2$ \cite{Sza4}.  
In this paper, we study $\mathcal{W}$-type outer actions of finite abelian groups, 
which include ``weakly inner'' (or non-strongly outer) actions. 

One of the main results in this paper is the characterization of approximate representability
of $\alpha\otimes\mathrm{id}_{\mathcal{W}}$ by using the characteristic invariant of $\tilde{\alpha}$.
Note that approximate representability is the dual notion of the Rohlin 
property \cite{I1} (see also \cite{GHS}, \cite{GSan1}, \cite{Na0} and \cite{San}). 
Also, we can classify approximately representable actions by using classification results of 
C$^*$-algebras (see \cite{GSan2}, \cite{I1} and \cite{Na0}). 
As an application of this result, we can classify such actions up to conjugacy and cocycle conjugacy. 
For an action $\alpha$ on a simple separable monotracial C$^*$-algebra $A$ of a countable
discrete group $\Gamma$, put 
$N(\tilde{\alpha}):=\{g\in \Gamma \; | \; \tilde{\alpha}_g=\mathrm{Ad}(u)\; \text{for some}\; u\in 
\pi_{\tau_A}(A)^{''}\}$ and let $i(\tilde{\alpha})$ be the inner invariant of $\tilde{\alpha}$ defined in 
\cite{Jones}. We show the following classification result in this paper. 

\begin{mainthm}
(Corollary \ref{main:cor}) \ \\
Let $A$ and $B$ be 
simple separable nuclear monotracial C$^*$-algebras, and let $\alpha$ and $\beta$ be 
outer actions of a finite abelian group $\Gamma$ on $A$ and $B$, respectively. 
Assume that the characteristic invariants of $\tilde{\alpha}$ and $\tilde{\beta}$ are trivial. 
Then \ \\
(i) $\alpha\otimes \mathrm{id}_{\mathcal{W}}$ on $A\otimes\mathcal{W}$ and 
$\beta\otimes \mathrm{id}_{\mathcal{W}}$ on $B\otimes\mathcal{W}$ are cocycle conjugate 
if and only if $N(\tilde{\alpha})=N(\tilde{\beta})$; \ \\
(ii) $\alpha\otimes \mathrm{id}_{\mathcal{W}}$ on $A\otimes\mathcal{W}$ and 
$\beta\otimes \mathrm{id}_{\mathcal{W}}$ on $B\otimes\mathcal{W}$ are conjugate if and only if 
$N(\tilde{\alpha})=N(\tilde{\beta})$ and $i(\tilde{\alpha})=i(\tilde{\beta})$.
\end{mainthm}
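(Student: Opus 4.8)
The plan is to reduce the statement to a question about actions on the injective II$_1$ factor and then invoke Jones' classification. Since $A$ and $B$ are simple separable nuclear monotracial C$^*$-algebras, the von Neumann algebras $\pi_{\tau_A}(A)''$ and $\pi_{\tau_B}(B)''$ are both the injective II$_1$ factor $R$ (here one uses that a nontrivial $\Gamma$ forces $A$ and $B$ to be infinite dimensional, as $\mathbb{M}_n$ has no outer automorphism; the case $\Gamma=\{1\}$ is immediate from $A\otimes\mathcal{W}\cong B\otimes\mathcal{W}\cong\mathcal{W}$), and $\tilde\alpha$, $\tilde\beta$ are honest actions of $\Gamma$ on $R$. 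By the classification theorem for $\mathcal{W}$-type actions with trivial characteristic invariant established above (the ``in particular'' statement of the abstract), $\alpha\otimes\mathrm{id}_{\mathcal{W}}$ and $\beta\otimes\mathrm{id}_{\mathcal{W}}$ are conjugate (resp.\ cocycle conjugate) if and only if $\tilde\alpha$ and $\tilde\beta$ are conjugate (resp.\ cocycle conjugate) on $R$. It therefore suffices to prove, for two actions of a finite abelian group $\Gamma$ on $R$ whose characteristic invariants are trivial, that: (i) they are cocycle conjugate if and only if $N(\tilde\alpha)=N(\tilde\beta)$; and (ii) they are conjugate if and only if $N(\tilde\alpha)=N(\tilde\beta)$ and $i(\tilde\alpha)=i(\tilde\beta)$.

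For (ii) I would quote \cite{Jones}: the complete conjugacy invariant of a finite group action on $R$ is given by the normal subgroup, the characteristic invariant and the inner invariant, and once the characteristic invariant vanishes this reduces to the pair $\bigl(N(\,\cdot\,),\,i(\,\cdot\,)\bigr)$. The ``only if'' direction of (ii) is then immediate, since $N$ and $i$ are conjugacy invariants, and the ``if'' direction is precisely Jones' uniqueness statement in the trivial-characteristic-invariant regime.

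For (i), the forward direction is elementary: if $\tilde\beta_g=\mathrm{Ad}(v_g)\circ\tilde\alpha_g$ for an $\tilde\alpha$-cocycle $(v_g)_{g\in\Gamma}$, then $\tilde\beta_g$ is inner exactly when $\tilde\alpha_g$ is, so $N(\,\cdot\,)$ is a cocycle conjugacy invariant. For the reverse direction, set $N:=N(\tilde\alpha)=N(\tilde\beta)$. Here I would use the vanishing of the characteristic invariant first to choose unitaries $u_n\in R$, $n\in N$, implementing $\tilde\alpha|_N$, forming a unitary representation of $N$ that is fixed pointwise by $\tilde\alpha$, and then to extend $(u_n^*)_{n\in N}$ to a genuine $\tilde\alpha$-cocycle on $\Gamma$; the obstruction to this extension is exactly the characteristic invariant, so this is the step where the hypothesis is genuinely consumed. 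The resulting perturbed action $\tilde\alpha'$ is trivial on $N$, hence is the inflation of an \emph{outer} action of the finite abelian group $\Gamma/N$ on $R$; carrying out the same construction for $\tilde\beta$ and invoking Connes--Jones uniqueness of outer actions of finite groups on $R$ \cite{C3, Jones}, the two inflated actions are conjugate. Therefore $\tilde\alpha$ is cocycle conjugate to $\tilde\alpha'$, which is conjugate to $\tilde\beta'$, which in turn is cocycle conjugate to $\tilde\beta$. This also explains why $i$ drops out in (i): it is annihilated by exactly this perturbation.

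The genuine obstacle does not lie in this Corollary but in the classification theorem it rests on. That theorem requires the characterization of approximate representability of $\alpha\otimes\mathrm{id}_{\mathcal{W}}$ via the characteristic invariant (the paper's main theorem), a Rohlin-type uniqueness statement for the dual action, and an Evans--Kishimoto style one-sided intertwining argument carried out inside $\mathcal{W}$ tensored with Kirchberg's central sequence C$^*$-algebra. Granting that input, the only real care needed for the present statement is the bookkeeping that matches C$^*$-algebraic cocycle conjugacy and conjugacy with Jones' von Neumann invariants, together with the observation above that the inner invariant is a conjugacy but not a cocycle conjugacy invariant while $N$ is both.
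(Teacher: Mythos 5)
Your proposal is correct and follows the paper's route exactly: the corollary is deduced by combining the preceding classification theorem (which reduces conjugacy and cocycle conjugacy of $\alpha\otimes\mathrm{id}_{\mathcal{W}}$ and $\beta\otimes\mathrm{id}_{\mathcal{W}}$ to that of $\tilde{\alpha}$ and $\tilde{\beta}$ on the injective II$_1$ factor) with Jones' and Ocneanu's classification (Theorem \ref{thm:jones}). The only difference is that you also sketch a proof of the Jones input (perturbing away the inner part via the trivial characteristic invariant and invoking uniqueness of outer actions of $\Gamma/N$), which the paper simply cites.
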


To the author's best knowledge, this classification is the first abstract classification result 
for ``weakly inner''  (or non-strongly outer) actions on stably finite C$^*$-algebras without inductive 
limit type structures. 
Also, we construct the model actions of the actions in the theorem above. 
This construction might be of independent interest. 

\section{Preliminaries}\label{sec:pre}

\subsection{Group actions}

Let  $\alpha$ and $\beta$ be actions of a discrete group $\Gamma$ on C$^*$-algebras $A$ and 
$B$, respectively. 
We say that $\alpha$ is \textit{conjugate} to $\beta$ if there exists an isomorphism $\theta$ from $A$ 
onto $B$ such that $\theta\circ \alpha_g= \beta_g\circ \theta$ for any $g\in\Gamma$. 
An \textit{$\alpha$-cocycle} is a map $u$ from $\Gamma$ to 
the unitary group of the multiplier algebra $M(A)$ of $A$ such that $u_{gh}=u_g\alpha_g(u_h)$. 
Note that we denote the induced action on $M(A)$ by the same symbol $\alpha$ for simplicity. 
We say that 
$\alpha$ is \textit{cocycle conjugate} to $\beta$ if there exist an isomorphism $\theta$ from $A$ to 
$B$ and a $\beta$-cocycle $u$ such that $\theta\circ \alpha_g=\mathrm{Ad}(u_g)\circ \beta_g \circ 
\theta$ for any $g\in \Gamma$. 
Let 
$$
N(\alpha):= \{g\in\Gamma \; |\; \alpha_g=\mathrm{Ad}(u)\; \text{for some}\; u\in M(A)\}.
$$
It is said to be that $\alpha$ is \textit{outer} if $N(\alpha)=\{\iota\}$ where $\iota$ is 
the identity element in $\Gamma$. 

We denote by $A^{\alpha}$ and $A\rtimes_{\alpha}\Gamma$ the fixed point subalgebra and 
the reduced crossed product C$^*$-algebra, respectively. 
Let $E_{\alpha}$ denote the canonical conditional expectation from $A\rtimes_{\alpha}\Gamma$ onto 
$A$. 
If $A$ is simple and $\alpha$ is outer, then  $A\rtimes_{\alpha}\Gamma$ is simple by \cite{K}. 
Assume that $\Gamma$ is a finite abelian group. Let 
$$
e_{\alpha}:=
\frac{1}{|\Gamma|}\sum_{g\in\Gamma}\lambda_g\in M(A\rtimes_{\alpha}\Gamma)
$$ 
where $\lambda_g$ is the implementing unitary of $\alpha_g$ in $M(A\rtimes_{\alpha}\Gamma)$ 
and $|\cdot|$ denotes cardinality. 
Then $e_{\alpha}$ is a projection and 
$e_{\alpha}(A\rtimes_{\alpha}\Gamma)e_{\alpha}$ is isomorphic to $A^{\alpha}$. 
We denote by $\hat{\alpha}$ the dual action of $\alpha$, that is, 
$\hat{\alpha}_{\eta}(\sum_{g\in \Gamma}a_g\lambda_g)=
\sum_{g\in \Gamma}\eta(g)a_g\lambda_g$ for any 
$\sum_{g\in \Gamma}a_g\lambda_g\in A\rtimes_{\alpha}\Gamma$ and $\eta\in \hat{\Gamma}$. 
Note that $\hat{\Gamma}$ is isomorphic to $\Gamma$ since we assume that 
$\Gamma$ is a finite abelian group. 

Let $T_1(A)$ denote the tracial state space of $A$. 
Every tracial state on $A$ can be uniquely extended to a tracial state on $M(A)$. We denote it by the 
same symbol for simplicity. If $\varphi$ is a nondegenerate homomorphism from $A$ to $B$, then 
$\varphi$ induces an affine continuous map $T(\varphi)$ from $T_1(B)$ to $T_1(A)$ by 
$T(\varphi)(\tau)= \tau\circ \varphi$ for any $\tau\in T_1(B)$. 
Hence every action $\alpha$ on $A$ induces an action $T(\alpha)$ on $T_1(A)$. 
We denote by $T_1(A)^{\alpha}$ the fixed point set of this induced action. 
Straightforward arguments show the following proposition. 

\begin{pro}\label{pro:conjugacy-trace-spaces}
Let  $\alpha$ and $\beta$ be actions of a finite abelian group $\Gamma$ on C$^*$-algebras $A$ and 
$B$, respectively. Then \ \\
(i) if $\alpha$ and $\beta$ are cocycle conjugate, then there exists an affine homeomorphism $F$
from $T_1(B\rtimes_{\beta}\Gamma)$ onto $T_1(A\rtimes_{\alpha}\Gamma)$ such that 
$F\circ T(\hat{\beta}_{\eta})=T(\hat{\alpha}_{\eta})\circ F$ for any $\eta\in \hat{\Gamma}$; \ \\
(ii) if $\alpha$ and $\beta$ are conjugate, then there exists an affine homeomorphism $F$
from $T_1(B\rtimes_{\beta}\Gamma)$ onto $T_1(A\rtimes_{\alpha}\Gamma)$ such that 
$F(\tau)(e_{\alpha})=\tau (e_\beta)$ for any $\tau \in T_1(B\rtimes_{\beta}\Gamma)$ and 
$F\circ T(\hat{\beta}_{\eta})=T(\hat{\alpha}_{\eta})\circ F$ for any $\eta\in \hat{\Gamma}$.
\end{pro}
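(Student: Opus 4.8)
The plan is to derive both statements from a single construction: a cocycle conjugacy between $\alpha$ and $\beta$ lifts to an isomorphism of the crossed products which intertwines the dual actions and carries $e_\alpha$ to $e_\beta$ when the cocycle is trivial. First I would fix a cocycle conjugacy, i.e. an isomorphism $\theta\colon A\to B$ and a $\beta$-cocycle $u$ with $\theta\circ\alpha_g=\mathrm{Ad}(u_g)\circ\beta_g\circ\theta$ for all $g\in\Gamma$, and define $\Theta\colon A\rtimes_\alpha\Gamma\to B\rtimes_\beta\Gamma$ on generators by $\Theta(a)=\theta(a)$ for $a\in A$ and $\Theta(\lambda^\alpha_g)=u_g\lambda^\beta_g$, where $\theta$ also denotes its extension to the multiplier algebras. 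The relation $\lambda^\beta_g u_h(\lambda^\beta_g)^*=\beta_g(u_h)$ together with the cocycle identity $u_{gh}=u_g\beta_g(u_h)$ shows that $g\mapsto u_g\lambda^\beta_g$ respects the group law, while the covariance relation above shows $\Theta(\lambda^\alpha_g)\Theta(a)\Theta(\lambda^\alpha_g)^*=\Theta(\alpha_g(a))$; hence $\Theta$ is a well-defined $*$-homomorphism, and the analogous construction applied to $\theta^{-1}$ and the $\alpha$-cocycle $g\mapsto\theta^{-1}(u_g^*)$ produces a two-sided inverse, so $\Theta$ is an isomorphism, extending to an isomorphism of the multiplier algebras.

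Next I would check that $\Theta$ intertwines the dual actions. Since $\hat\alpha_\eta$ and $\hat\beta_\eta$ fix $A$ and $B$ pointwise and multiply the implementing unitaries by the scalar $\eta(g)$, the identity $\Theta(\lambda^\alpha_g)=u_g\lambda^\beta_g$ and the centrality of scalars give $\hat\beta_\eta\circ\Theta=\Theta\circ\hat\alpha_\eta$ for every $\eta\in\hat\Gamma$. Now set $F:=T(\Theta)$, that is, $F(\tau)=\tau\circ\Theta$. Because $\Theta$ is an isomorphism, $F$ is an affine weak$^*$-homeomorphism from $T_1(B\rtimes_\beta\Gamma)$ onto $T_1(A\rtimes_\alpha\Gamma)$, and the intertwining relation yields $F\circ T(\hat\beta_\eta)=T(\hat\alpha_\eta)\circ F$ for all $\eta\in\hat\Gamma$; this proves (i).

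For (ii), a conjugacy is exactly the special case $u_g=1$ for all $g$, so $\Theta(\lambda^\alpha_g)=\lambda^\beta_g$ and therefore $\Theta(e_\alpha)=\frac{1}{|\Gamma|}\sum_{g\in\Gamma}\lambda^\beta_g=e_\beta$ in $M(B\rtimes_\beta\Gamma)$. Extending $\tau\in T_1(B\rtimes_\beta\Gamma)$ uniquely to the multiplier algebra, we obtain $F(\tau)(e_\alpha)=\tau(\Theta(e_\alpha))=\tau(e_\beta)$, and the dual-action intertwining is inherited from (i); this proves (ii).

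The only genuine work is bookkeeping. Verifying that the formula for $\Theta$ is multiplicative and invertible requires keeping the cocycle identity, the covariance relation, and the direction of all compositions straight, and one must make sure the passage to multiplier algebras is legitimate so that $\lambda_g$ and $e_\alpha$ live where they should and tracial states extend uniquely, as recalled in the preliminaries. One should also note that nothing forces $A$ or $B$ to be unital, since the whole argument takes place in the multiplier algebras. There is no deep input here — this is the ``straightforward argument'' the statement refers to — and its force is felt only later, when $F$, the dual actions, and the projections $e_\alpha$, $e_\beta$ are combined with the characteristic invariant.
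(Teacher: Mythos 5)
The paper gives no proof of this proposition (it only says ``Straightforward arguments show the following proposition''), and your argument is exactly the standard one being alluded to: the cocycle conjugacy induces the isomorphism $\Theta(a)=\theta(a)$, $\Theta(\lambda^\alpha_g)=u_g\lambda^\beta_g$ of crossed products, which intertwines the dual actions and, when $u\equiv 1$, carries $e_\alpha$ to $e_\beta$. All the verifications (the cocycle identity making $g\mapsto u_g\lambda^\beta_g$ multiplicative, the inverse built from the $\alpha$-cocycle $g\mapsto\theta^{-1}(u_g^*)$, and the passage to multiplier algebras via the unique tracial extension) check out, so the proposal is correct and there is nothing to flag.
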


Let $\tau$ be an $\alpha$-invariant tracial state on $A$, that is, $\tau\in T_1(A)^{\alpha}$. 
Then $\alpha$ induces an action $\tilde{\alpha}$ on $\pi_{\tau}(A)^{''}$ where 
$\pi_{\tau}$ is the GNS representation associated with $\tau$. 

\subsection{Finite abelian group actions on the injective II$_1$ factor}

We shall recall some results in \cite{Jones}. We refer the reader to 
\cite{Jones}, \cite{Jones2} and \cite{Oc} for details. 
Let $M$ be a II$_1$ factor, and let $\delta$ be an action of a finite abelian group $\Gamma$ 
on $M$. Although we can consider in a more general setting, 
we assume that $\Gamma$ is a finite abelian group for simplicity. 
Also, note that the von Neumann algebraic crossed product of 
$(M, \Gamma, \delta)$ is isomorphic to (the reduced crossed product C$^*$-algebra) 
$M\rtimes_{\delta}\Gamma$ by this assumption. 
By definition of $N(\delta)$, there exists a map $v$ from $N(\delta)$ to the unitary group of 
$M$ such that $\delta_h=\mathrm{Ad}(v_h)$ for any $h\in N(\delta)$. 
For any $g\in \Gamma$ and $h\in N(\delta)$, there exists  a complex number 
$\lambda_{\delta}(g, h)$ with $|\lambda_{\delta} (g, h)|=1$ such that 
$\delta_g(v_h)=\lambda_{\delta}(g,h)v_h$. It is easy to see that 
$\lambda_{\delta}(g, h)$ does not depend on the choice of $v$. 
We say that \textit{the characteristic invariant of $\delta$ is trivial} if 
$\lambda_{\delta}(g,h)=1$ for any $g\in \Gamma$ and $h\in N(\delta)$. 
We refer the reader to \cite[Section 1.2]{Jones} for the precise definition of 
the characteristic invariant. 
Note that we may assume that $v$ is a unitary representation since 
$N(\delta)$ is a finite abelian group. 
Indeed, if $N(\delta)$ is a cyclic group generated by $g$ of order $n$, then there exists a 
complex number $\gamma$ with $|\gamma| =1$ such that $v_g^n=\gamma 1$. 
Choose an $n$-th root $\gamma^{\prime}$ of $\gamma$ and define a map $v^{\prime}$ from 
$N(\delta)$ to the unitary group of $M$ by $v^{\prime}_{g^k}:= \gamma^{\prime k}v_g^k$ for any 
$1\leq k\leq n$. Then $v^{\prime}$ is a unitary representation such that 
$\delta_h=\mathrm{Ad}(v_h^{\prime})$ for any $h\in N(\delta)$. 
Since every finite abelian group is a finite direct sum of cyclic groups of finite order, 
if $N(\delta)$ is a finite abelian group, then there exists such a unitary representation. 

\begin{pro}\label{pro:non-trivial-characteristic}
Let $M$ be a II$_1$ factor, and let $\delta$ be an action of a finite abelian group $\Gamma$ 
on $M$. If the characteristic invariant of $\delta$ is not trivial, then the dual action $\hat{\delta}$ 
is not outer. 
\end{pro}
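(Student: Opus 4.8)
The plan is to write down, for each $h\in N(\delta)$, an explicit unitary in $M\rtimes_\delta\Gamma$ that implements a dual automorphism $\hat\delta_{\eta_h}$ for a suitable character $\eta_h\in\hat\Gamma$ manufactured out of $\lambda_\delta(\,\cdot\,,h)$, and then to observe that nontriviality of the characteristic invariant forces at least one such $\eta_h$ to be nontrivial. First I would fix $h\in N(\delta)$ and a unitary $v_h\in M$ with $\delta_h=\mathrm{Ad}(v_h)$, and check that $g\mapsto\lambda_\delta(g,h)$ is a character of $\Gamma$: applying the identity $\delta_{g_1}\circ\delta_{g_2}=\delta_{g_1g_2}$ to $v_h$ and using $\delta_g(v_h)=\lambda_\delta(g,h)v_h$ yields $\lambda_\delta(g_1g_2,h)=\lambda_\delta(g_1,h)\lambda_\delta(g_2,h)$, so $\eta_h:=\lambda_\delta(\,\cdot\,,h)$ belongs to $\hat\Gamma$ (and it is well defined, since $\lambda_\delta(g,h)$ does not depend on the choice of $v_h$).

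Next I would consider the unitary $w:=v_h^*\lambda_h\in M\rtimes_\delta\Gamma$ (the crossed product is unital because $\Gamma$ is finite) and compute $\mathrm{Ad}(w)$ on the generating set $M\cup\{\lambda_g:g\in\Gamma\}$. For $x\in M$ one gets $wxw^*=v_h^*\delta_h(x)v_h=x$, so $\mathrm{Ad}(w)$ is trivial on $M$. For $\lambda_g$, using that $\Gamma$ is abelian so $\lambda_h\lambda_g\lambda_h^*=\lambda_g$, and then $\lambda_gv_h=\delta_g(v_h)\lambda_g=\lambda_\delta(g,h)v_h\lambda_g$, one gets $w\lambda_gw^*=v_h^*\lambda_gv_h=\lambda_\delta(g,h)\lambda_g=\eta_h(g)\lambda_g$. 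Since an automorphism of $M\rtimes_\delta\Gamma$ is determined by its restriction to $M$ and its values on the $\lambda_g$, this shows $\mathrm{Ad}(w)=\hat\delta_{\eta_h}$; in other words $\hat\delta_{\eta_h}$ is inner for every $h\in N(\delta)$. Now if the characteristic invariant of $\delta$ is not trivial, there are $g_0\in\Gamma$ and $h_0\in N(\delta)$ with $\lambda_\delta(g_0,h_0)\ne1$, so $\eta_{h_0}$ is a nontrivial element of $\hat\Gamma$ for which $\hat\delta_{\eta_{h_0}}$ is inner; hence $N(\hat\delta)\ne\{\iota\}$ and $\hat\delta$ is not outer.

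I do not anticipate a real obstacle: the whole argument is a short computation inside the crossed product, and the only points deserving a line of care are that the C$^*$-algebraic and von Neumann crossed products coincide here (already remarked in the text) so that $\hat\delta$ is the expected object, that $\lambda_\delta(g,h)$ is independent of the chosen $v_h$ so that $\eta_h$ is well defined, and that agreement of two automorphisms of $M\rtimes_\delta\Gamma$ on $M$ and on the implementing unitaries implies equality. If one wishes, the computation can be packaged as a homomorphism $N(\delta)\to N(\hat\delta)\subseteq\hat\Gamma$, $h\mapsto\lambda_\delta(\,\cdot\,,h)$, and the proposition is exactly the statement that this map is nontrivial whenever the characteristic invariant is nontrivial.
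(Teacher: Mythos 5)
Your proof is correct and takes essentially the same route as the paper: the paper also sets $\eta_0=\lambda_\delta(\,\cdot\,,h_0)$ and observes that $\hat{\delta}_{\eta_0}=\mathrm{Ad}(\lambda_{h_0}v_{h_0}^*)$, and your unitary $v_{h_0}^*\lambda_{h_0}$ is a scalar multiple of that one, so it implements the same automorphism. You merely write out the verifications (that $\eta_{h_0}$ is a character and that the adjoint action agrees with $\hat{\delta}_{\eta_{h_0}}$ on $M$ and on the $\lambda_g$) that the paper leaves implicit.
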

\begin{proof}
Since the characteristic invariant of $\delta$ is not trivial, there exist $g_0\in\Gamma$ and 
$h_0\in N(\delta)$ such that $\lambda_{\delta}(g_0, h_0)\neq 1$. 
Define a map $\eta_0$ from $\Gamma$ to $\mathbb{C}$ by $\eta_0(g):=\lambda_{\delta} (g, h_0)$ 
for any $g\in\Gamma$. Then $\eta_0$ is a nontrivial character on $\Gamma$. 
Let $v_{h_0}$ be a unitary element in $M$ such that $\delta_{h_0}=\mathrm{Ad}(v_{h_0})$. 
Then we have 
\begin{align*}
\mathrm{Ad}(\lambda_{h_0}v_{h_0}^*)\left(\sum_{g\in \Gamma}a_g\lambda_g\right)
&=\sum_{g\in\Gamma}\lambda_{h_0}v_{h_0}^*a_g \lambda_gv_{h_0}\lambda_{h_0}^* \\
&= \sum_{g\in\Gamma}\lambda_{h_0}v_{h_0}^*a_g \delta_{g}(
v_{h_0})\lambda_g\lambda_{h_0}^* \\
&=\sum_{g\in\Gamma}\lambda_{\delta}(g,h_0) \lambda_{h_0}v_{h_0}^*a_gv_{h_0}\lambda_{h_0}^*
\lambda_{g} \\
&=\sum_{g\in\Gamma}\lambda_{\delta}(g,h_0) \lambda_{h_0}\delta_{h^{-1}_0}(a_g)\lambda_{h_0}^*
\lambda_{g} \\
&= \sum_{g\in\Gamma}\eta_0(g)a_g\lambda_{g} \\
&= \hat{\delta}_{\eta_0}\left( \sum_{g\in \Gamma}a_g\lambda\right)
\end{align*}
for any $\sum_{g\in \Gamma}a_g\lambda_g \in M\rtimes_{\gamma}\Gamma$. 
Therefore $\hat{\delta}$ is not outer. 
\end{proof}

In the rest of this section, we assume that the characteristic invariant of $\delta$ is trivial, and 
$v$ is a unitary representation $v$ of $N(\delta)$ on 
$M$ such that $\delta_h=\mathrm{Ad}(v_h)$ for any $h\in N(\delta)$. 
Define a homomorphism $\Phi_v$ from the group algebra $\mathbb{C}N(\delta)$ to $M$ 
by $\Phi_v(\sum_{h\in N(\delta)}c_h h)=\sum_{h\in N(\delta)}c_hv_h$. 
Let $P_{N(\delta)}$ be the set of minimal projections in  $\mathbb{C}N(\delta)$, and let 
$M(P_{N(\delta)})$ be the set of probability measures on $P_{N(\delta)}$. 
Note that we have $P_{N(\delta)}=\left\{\frac{1}{|N(\delta)|}\sum_{h\in N(\delta)} \eta (h)h\; |\; \eta\in 
\hat{N(\delta)}\right\}$.
Define a probability measure $m_{v}(\delta)$ on $P_{N(\delta)}$ by $\tau_{M}(\Phi_v(p))$ for any 
$p\in P_{N(\delta)}$ where $\tau_{M}$ is the unique tracial state on $M$. 
Note that $m_v(\delta)$ depends on the choice of $v$. 
For any character $\eta$ of $N(\delta)$, define an automorphism $\partial(\eta)$ of 
$\mathbb{C}N(\delta)$ by $\partial(\eta) (\sum_{h\in N(\delta)}c_h h)=
\sum_{h\in N(\delta)}\eta(h)c_h h$. 
Define an equivalence relation $\sim$ on $M(P_{N(\delta)})$ by $m\sim m^{\prime}$ if there exists a 
character $\eta$ of $N(\delta)$ such that  $m = m^{\prime }\circ \partial(\eta)$. 
Put $i(\delta):= [m_v(\delta)]\in M(P_{N(\delta)})/\sim$. Then $i(\delta)$ does not depend on the choice 
of $v$ and $i(\delta)$ is a conjugacy invariant. The following theorem is a part of 
Ocneanu's classification theorem and Jones' classification theorem. 

\begin{thm}\label{thm:jones}
(Cf. \cite[Theorem 2.6]{Oc} and \cite[Theorem 1.4.8]{Jones}) \ \\
Let $\delta$ and $\delta^{\prime}$ be actions of a finite abelian group $\Gamma$ on the 
injective II$_1$ factor $M$. Assume that the characteristic invariants of $\delta$ and 
$\delta^{\prime}$ are trivial. Then \ \\ 
(i) $\delta$ and $\delta^{\prime}$ are cocycle conjugate if and only if 
$N(\delta)=N(\delta^{\prime})$; \ \\
(ii) $\delta$ and $\delta^{\prime}$ are conjugate if and only if 
$N(\delta)=N(\delta^{\prime})$ and $i(\delta)=i(\delta^{\prime})$.
\end{thm}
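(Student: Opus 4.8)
The plan is to deduce the theorem from Ocneanu's uniqueness theorem for outer actions of amenable groups on the injective II$_1$ factor $M$ (\cite[Theorem 2.6]{Oc}) together with Jones' analysis of the inner part (\cite{Jones}). The two ``only if'' directions are routine: an isomorphism carries inner automorphisms to inner automorphisms and $\mathrm{Ad}(u_g)\circ\delta_g$ is inner exactly when $\delta_g$ is, so $N(\delta)$ is preserved under both conjugacy and cocycle conjugacy, while $i(\delta)$ was already observed above to be a conjugacy invariant. So all the work is in the converses.

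For the ``if'' direction of (i), put $N:=N(\delta)$ and, using triviality of the characteristic invariant, fix a unitary representation $v\colon N\to U(M)$ with $\delta_h=\mathrm{Ad}(v_h)$. The key step is to extend $v^*$ (up to a character twist on $N$, if needed) to a $\delta$-cocycle $w$ on all of $\Gamma$; here the vanishing of $\lambda_\delta$ is precisely what removes the cohomological obstruction to the extension. Replacing $\delta$ by $\mathrm{Ad}(w_\bullet)\circ\delta$ — which leaves $N$ unchanged — we may assume $\delta$ is trivial on $N$, so that it descends to an outer action $\bar\delta$ of $\Gamma/N$ on $M$. Running the same reduction for $\delta^{\prime}$ and invoking \cite[Theorem 2.6]{Oc}, the actions $\bar\delta$ and $\bar\delta^{\prime}$ are cocycle conjugate; lifting the conjugating isomorphism and cocycle through $\Gamma\to\Gamma/N$ gives a cocycle conjugacy between the modified actions, and hence between $\delta$ and $\delta^{\prime}$.

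For the ``if'' direction of (ii) one must upgrade cocycle conjugacy to genuine conjugacy, and this is what forces $i(\delta)$ into play. I would first build, for each subgroup $N\trianglelefteq\Gamma$ and each probability measure $m$ on $P_N$, a model action $a_{N,m}$ on $M\cong M\otimes M$ of the form $g\mapsto\mathrm{Ad}(\tilde v_g)\otimes\beta_{\bar g}$, where $\beta$ is an outer action of $\Gamma/N$, the representation $v$ of $N$ is chosen so that $\tau_M(\Phi_v(p))=m(p)$ for all $p\in P_N$ (possible since $p\mapsto m(p)$ is, via Fourier transform, a normalized positive-definite function on $N$ that can be realized inside $M$), and $\tilde v$ is the cocycle extension of $v$ as in (i). Then $N(a_{N,m})=N$, its characteristic invariant is trivial, and $i(a_{N,m})=[m]$; moreover the freedom to replace $v_h$ by $\eta(h)v_h$ for a character $\eta$ implements exactly the relation $\sim$, so that $a_{N,m}$ and $a_{N,m^{\prime}}$ are conjugate iff $[m]=[m^{\prime}]$. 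It then remains to show that an arbitrary $\delta$ with trivial characteristic invariant is conjugate to $a_{N(\delta),\,m_v(\delta)}$: starting from the cocycle conjugacy provided by (i), one first adjusts the implementing isomorphism so that the images of the $v_h$ have the prescribed trace distribution — this is where $i(\delta)=[m_v(\delta)]$ enters — and then absorbs the leftover cocycle into the model by an Ocneanu-type cocycle-vanishing argument for the outer factor $\beta$.

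The hard parts will be the cohomological extension step in (i), where triviality of the characteristic invariant must be exploited in an essential way, and the concluding cocycle-absorption step in (ii), which is the technical heart of Jones' classification; these, together with the deep uniqueness theorem \cite[Theorem 2.6]{Oc} on which the whole argument rests, are precisely what we are importing from \cite{Oc} and \cite{Jones}.
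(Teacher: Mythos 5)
The paper offers no proof of this theorem: it is imported verbatim from \cite[Theorem 2.6]{Oc} and \cite[Theorem 1.4.8]{Jones}, as the ``Cf.'' in its statement indicates. Your outline is a faithful reconstruction of how those references establish it (the routine invariance of $N(\delta)$ and $i(\delta)$, reduction modulo $N(\delta)$ via a cocycle extension of $v^*$, Ocneanu's uniqueness theorem for the outer quotient action, and Jones' model-action/cocycle-absorption argument for upgrading to conjugacy), and it defers the genuinely hard steps to exactly the same sources the paper cites, so it is consistent with the paper's treatment.
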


Define a map $\Pi_v$ from $\mathbb{C}N(\delta)$ to $M\rtimes_{\delta}\Gamma$ 
by $\Pi_v(\sum_{h\in N(\delta)}c_h h)=\sum_{h\in N(\delta)}c_h v_h\lambda_h^*$. 
Then $\Pi_v$ is an isomorphism from $\mathbb{C}N(\delta)$ onto 
the center $Z(M\rtimes_{\delta}\Gamma)$ of $M\rtimes_{\delta}\Gamma$ 
by \cite[Corollary 2.2.2]{Jones}. 
This implies that $T_1(M\rtimes_{\delta}\Gamma)$ is an $|N(\delta)|$-simplex. 
Indeed, for any $p\in P_{N(\delta)}$, define a tracial state $\tau_{p}$ on 
$M\rtimes_{\delta}\Gamma$ by $\tau_{p}(x):= |N(\delta)|\tau_{M}\circ E_{\delta}(\Pi_v(p)x)$
for any $x\in M\rtimes_{\delta}\Gamma$. 
Then $\tau_{p}$ is a unique tracial state on $\Pi_v(p)M\rtimes_{\delta}\Gamma$ since 
$\Pi_v(p)M\rtimes_{\delta}\Gamma$ is a factor. 
If $\tau$ is a tracial state on $M\rtimes_{\delta}\Gamma$, then 
$\tau(x)=\sum_{p\in P_{N(\delta)}}\tau (\Pi_v(p)x)=\sum_{p\in P_{N(\delta)}}\tau (\Pi_v(p))\tau_{p}(x)$ 
for any $x\in M\rtimes_{\delta}\Gamma$. 
Hence $T_1(M\rtimes_{\delta}\Gamma)$ is an $|N(\delta)|$-simplex and 
the set of extremal tracial states on $M\rtimes_{\delta}\Gamma$ is equal to 
$\{\tau_{p}\; |\; p\in P_{N(\delta)}\}$. 
Easy computations show that we have 
$\tau_{p}(e_{\delta})=\frac{|N(\delta)|}{|\Gamma|}\tau_{M}(\Phi_v(p))$ for any 
$p\in P_{N(\delta)}$. Therefore we can recover $i(\delta)$ by considering the extremal tracial states 
on $M\rtimes_{\delta}\Gamma$. 

\subsection{Finite abelian group actions on monotracial C$^*$-algebras}

We say a C$^*$-algebra $A$ is \textit{monotracial} if $A$ has a unique tracial state and no 
unbounded traces. For a monotracial C$^*$-algebra $A$, 
we denote by $\tau_{A}$ the unique tracial state on $A$ unless otherwise specified.
Let $A$ be a simple separable monotracial C$^*$-algebra, and let $\alpha$ be 
an outer action of a finite abelian group $\Gamma$ on $A$. 
Then $\pi_{\tau_{A}}(A)^{''}$ is a II$_1$ factor. 
Note that $A$ is not of type I since $A$ has an outer action. 
Of course, $N(\tilde{\alpha})$ and $i(\tilde{\alpha})$ are a cocycle conjugacy invariant and 
a conjugacy invariant for $\alpha$ on $A$, respectively. 
Since we assume that $A$ is simple, $\tau_A\circ E_{\alpha}$ is faithful. 
Hence 
we can regard $A\rtimes_{\alpha}\Gamma$ and 
$M(A\rtimes_{\alpha}\Gamma)$ as subalgebras of 
$\pi_{\tau_A\circ E_{\alpha}}(A\rtimes_{\alpha}\Gamma)^{''}\cong 
\pi_{\tau_{A}}(A)^{''}\rtimes_{\tilde{\alpha}}\Gamma$.

\begin{lem}\label{lem:trace-spaces-crossed-products}
Let $B$ be a simple separable C$^*$-algebra with a compact tracial state space $T_1(B)$, and let 
$\beta$ be an action of a finite group $\Gamma$ on $B$ with $T_1(B)^{\beta}=\{\tau_{0}\}$.  
Assume that $\pi_{\tau_{0}}(B)^{''}$ has finitely many extremal tracial states (this is equivalent to that 
every tracial state on $\pi_{\tau_{0}}(B)^{''}$ is normal). 
Then the restriction map $T_1(\pi_{\tau_{0}}(B)^{''})\ni \tau \mapsto \tau|_B\in T_1(B)$ is an 
affine homeomorphism. 
\end{lem}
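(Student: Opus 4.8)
The plan is the following. Write $M:=\pi_{\tau_0}(B)''$ and let $\xi_0$ be the cyclic vector of the GNS representation $\pi_{\tau_0}$, so that $\bar\tau_0:=\langle\,(\cdot)\,\xi_0,\xi_0\rangle$ is a faithful normal tracial state on $M$; in particular $M$ is finite. Since $B$ is simple, $\pi_{\tau_0}$ is faithful and we identify $B$ with the $\sigma$-weakly dense C$^*$-subalgebra $\pi_{\tau_0}(B)$ of $M=B''$. The restriction map $r\colon T_1(M)\ni\tau\mapsto\tau|_B\in T_1(B)$ is affine and weak$^*$-continuous, and both $T_1(M)$ and $T_1(B)$ are weak$^*$-compact (the latter by hypothesis); hence it suffices to show that $r$ is a bijection, since a continuous affine bijection between compact convex sets is automatically an affine homeomorphism.

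Surjectivity I would deduce from the hypothesis $T_1(B)^\beta=\{\tau_0\}$ alone. For every $\tau\in T_1(B)$ the average $\frac1{|\Gamma|}\sum_{g\in\Gamma}\tau\circ\beta_g$ is a $\beta$-invariant tracial state, hence equals $\tau_0$, so that $\tau\le|\Gamma|\,\tau_0$. Applying the noncommutative Radon--Nikodym theorem to this estimate inside $\pi_{\tau_0}$ produces a positive $T\in\pi_{\tau_0}(B)'$ with $\tau(b)=\langle\pi_{\tau_0}(b)T^{1/2}\xi_0,\,T^{1/2}\xi_0\rangle$ for all $b\in B$, so the vector state $\omega:=\langle\,(\cdot)\,T^{1/2}\xi_0,\,T^{1/2}\xi_0\rangle$ on $M$ is normal and restricts to $\tau$ on $B$. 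Since $\omega$ is $\sigma$-weakly continuous and tracial on the $\sigma$-weakly dense subalgebra $B$, the maps $x\mapsto\omega(xy)$ and $x\mapsto\omega(yx)$ agree for each fixed $y$ (first on $B$, then on $M$ by continuity, then one varies $y$ the same way), so $\omega$ is tracial on $M$ and $r(\omega)=\tau$. Note this argument in fact gives each $\tau\in T_1(B)$ a \emph{normal} tracial extension to $M$.

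For injectivity I would first record that, under either form of the hypothesis, every tracial state on $M$ is normal. Indeed, for a finite von Neumann algebra the conditions ``$M$ has finitely many extremal tracial states'', ``$Z(M)$ is finite dimensional'', and ``every tracial state on $M$ is normal'' are equivalent: if $Z(M)$ is infinite dimensional, composing its infinitely many characters with the center-valued trace of $M$ yields infinitely many distinct extremal tracial states — each such trace being the unique tracial state with the prescribed restriction to $Z(M)$, because a tracial state annihilates $\overline{[M,M]}$ — while if $Z(M)$ is finite dimensional then $M$ is a finite direct sum of finite factors, each carrying a unique (hence normal) tracial state, so that every tracial state on $M$ is a convex combination of these. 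Granting this, if $\tau_1,\tau_2\in T_1(M)$ agree on $B$ then they are normal, hence $\sigma$-weakly continuous, and they agree on the $\sigma$-weakly dense subalgebra $B$, so $\tau_1=\tau_2$. Thus $r$ is a bijection, and the lemma follows.

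The only step that is not a routine application of Radon--Nikodym and the bicommutant theorem is the equivalence invoked at the start of the last paragraph, namely that having only finitely many extremal tracial states forces all tracial states on $M$ to be normal; this is where some care is needed. The genuine new input in the whole argument is the domination $\tau\le|\Gamma|\,\tau_0$, which is precisely what uniqueness of the $\beta$-invariant trace buys us.
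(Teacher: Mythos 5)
Your proof is correct, but the surjectivity argument is genuinely different from the paper's. For injectivity both arguments are the same in substance: every tracial state on $M=\pi_{\tau_0}(B)''$ is normal, hence determined by its restriction to the $\sigma$-weakly dense subalgebra $\pi_{\tau_0}(B)$; you additionally justify (via the center-valued trace and the Dixmier property) why ``finitely many extremal tracial states'' forces all tracial states to be normal, a point the paper leaves implicit in its parenthetical hypothesis. For surjectivity the paper argues by convex geometry: it lists the finitely many extremal traces $\tau_1,\dots,\tau_k$ of $M$, observes that their restrictions are extremal (factorial) traces of $B$, and shows that any further extremal trace $\sigma$ of $B$ would give, after averaging over $\Gamma$, two distinct barycentric representations of $\tau_0$ by extreme points of $T_1(B)$, contradicting the Choquet simplex property (Krein--Milman is used here, so compactness of $T_1(B)$ matters). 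You instead extend an arbitrary $\tau\in T_1(B)$ directly: the $\Gamma$-average equals $\tau_0$, so $\tau\le|\Gamma|\tau_0$, the commutant Radon--Nikodym theorem yields a normal positive extension to $M$, and traciality passes to the weak closure by separate $\sigma$-weak continuity of multiplication. Your route buys a stronger and more robust statement --- every trace of $B$ extends normally, with no appeal to the finiteness of the extremal trace set, to compactness of $T_1(B)$, or to the simplex structure --- at the cost of invoking the Radon--Nikodym machinery; the paper's route is shorter given its standing hypotheses and stays entirely at the level of extreme points of trace simplices.
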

\begin{proof}
It is obvious that the restriction map is a continuous affine map. 
Since $\pi_{\tau_{0}}(B)$ is weakly dense in $\pi_{\tau_{0}}(B)^{''}$ and every tracial state on 
$\pi_{\tau_{0}}(B)^{''}$ is normal, the restriction map is injective. 
We shall show the surjectivity. 
Let $\{\tau_1,\tau_2,...,\tau_{k}\}$ be the set of extremal tracial states 
on $\pi_{\tau_{0}}(B)^{''}$. 
Note that $\tau_{1}|_B$, $\tau_{2}|_B$,..., $\tau_{k}|_B$  are extremal tracial states on $B$ 
because $\tau$ is an extremal tracial state if and only if $\tau$ is a 
factorial tracial state. 
Since $T_1(B)$ is compact, it is enough to show that the set of extremal tracial states on $B$ is equal to 
$\{\tau_1|_B,\tau_2|_B,...,\tau_{k}|_B\}$ by the Krein-Milman theorem. 
On the contrary, suppose that there were an extremal tracial state $\sigma$ on $B$ such that 
$\sigma\notin \{\tau_1|_B,\tau_2|_B,...,\tau_{k}|_B\}$.
Since $\tau_0$ is $\beta$-invariant, $\beta$ induces an action $\tilde{\beta}$ on 
$\pi_{\tau_{0}}(B)^{''}$. It is easy to  see that $\{\tau_1,\tau_2,...,\tau_{k}\}$ is 
a $\tilde{\beta}$-invariant set. Hence $\{\tau_1|_B,\tau_2|_B,...,\tau_{k}|_B\}$ is a $\beta$-invariant set 
and $\sigma\circ \beta_g\notin \{\tau_1|_B,\tau_2|_B,...,\tau_{k}|_B\}$ for any $g\in \Gamma$. 
Since $\tau_0$ is the unique $\beta$-invariant tracial state on $B$, 
$$
\tau_0=\frac{1}{|\Gamma|}\sum_{g\in\Gamma}\tau_1\circ \beta_g=
\frac{1}{|\Gamma|}\sum_{g\in\Gamma}\sigma \circ \beta_g .
$$
Since $T_1(B)$ is a Choquet simplex, which we remind the reader requires $\tau_0$ to have a unique 
representation as a convex combination of the finitely many extremal traces of $T_1(B)$, this is 
a contradiction. 
\end{proof}

\begin{pro}\label{pro:trace-spaces-crossed-products}
Let $A$ be a simple separable monotracial C$^*$-algebra, and let $\alpha$ be 
an outer action of a finite abelian group $\Gamma$ on $A$. 
Then the restriction map $T_1(\pi_{\tau_A\circ E_{\alpha}}(A\rtimes_{\alpha}\Gamma)^{''})\ni 
\tau \mapsto 
\tau|_{A\rtimes_{\alpha}\Gamma}\in T_1(A\rtimes_{\alpha}\Gamma)$ is an affine homeomorphism. 
\end{pro}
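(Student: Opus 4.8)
The plan is to apply the previous lemma to the crossed product $B := A\rtimes_{\alpha}\Gamma$ equipped with the dual action $\hat{\alpha}$ of $\hat{\Gamma}$, so the key point is to verify that the three hypotheses of the lemma hold in this setting, with $\tau_0 = \tau_A\circ E_\alpha$. First I would check simplicity and compactness of the trace space: since $A$ is simple and $\alpha$ is outer, $A\rtimes_\alpha\Gamma$ is simple by \cite{K}; and because $\Gamma$ is finite, $A\rtimes_\alpha\Gamma$ is unital (it contains the implementing unitaries $\lambda_g$ and $A$ has an approximate unit, or more cleanly $1_{M(A)}$ lies in the multiplier algebra), so $T_1(A\rtimes_\alpha\Gamma)$ is automatically compact.

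Next I would identify the unique $\hat{\alpha}$-invariant tracial state. A tracial state $\omega$ on $A\rtimes_\alpha\Gamma$ is $\hat{\alpha}$-invariant precisely when $\omega(a\lambda_g) = 0$ for all $g\neq\iota$, i.e.\ $\omega = \omega|_A\circ E_\alpha$; and $\omega|_A$ must then be an $\alpha$-invariant tracial state on $A$, which is forced to be $\tau_A$ since $A$ is monotracial. Hence $T_1(A\rtimes_\alpha\Gamma)^{\hat{\alpha}} = \{\tau_A\circ E_\alpha\}$, giving the hypothesis $T_1(B)^{\hat{\alpha}} = \{\tau_0\}$ with $\Gamma$ replaced by $\hat{\Gamma}$ (which is a finite abelian group). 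The von Neumann algebra $\pi_{\tau_A\circ E_\alpha}(A\rtimes_\alpha\Gamma)^{''}$ is, as noted in the text just before the lemma, isomorphic to $\pi_{\tau_A}(A)^{''}\rtimes_{\tilde{\alpha}}\Gamma$, a finite von Neumann algebra whose center is $\Pi_v(\mathbb{C}N(\tilde{\alpha}))$ and hence finite-dimensional; so it has only finitely many extremal tracial states, which is the remaining hypothesis. Applying the lemma with $(B,\Gamma,\beta) = (A\rtimes_\alpha\Gamma,\hat{\Gamma},\hat{\alpha})$ then yields that the restriction map is an affine homeomorphism.

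The main obstacle I anticipate is not any deep estimate but rather bookkeeping: making sure the lemma is genuinely applicable, i.e.\ that $\pi_{\tau_A\circ E_\alpha}(A\rtimes_\alpha\Gamma)^{''}$ really has finitely many extremal traces (equivalently, finite-dimensional center). This follows from $Z(M\rtimes_{\tilde\alpha}\Gamma)\cong \mathbb{C}N(\tilde\alpha)$ via \cite[Corollary 2.2.2]{Jones} in the case when the characteristic invariant of $\tilde\alpha$ is trivial; in general one should note that $N(\tilde\alpha)$ is finite and the center of a crossed product of a factor by a finite group is always finite-dimensional, so the argument does not secretly require triviality of the characteristic invariant. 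Once that is in hand the proof is a one-line invocation of the lemma, and I would simply write it as such, perhaps recording the explicit description of the extremal traces for later use via the formula $\tau_p(x) = |N(\tilde\alpha)|\,\tau_M\circ E_{\tilde\alpha}(\Pi_v(p)x)$ restricted to $A\rtimes_\alpha\Gamma$.

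\begin{proof}
Since $\Gamma$ is finite, $A\rtimes_{\alpha}\Gamma$ is unital, so $T_1(A\rtimes_{\alpha}\Gamma)$ is compact; and since $A$ is simple and $\alpha$ is outer, $A\rtimes_{\alpha}\Gamma$ is simple by \cite{K}. Consider the dual action $\hat{\alpha}$ of the finite abelian group $\hat{\Gamma}$ on $A\rtimes_{\alpha}\Gamma$. If $\omega\in T_1(A\rtimes_{\alpha}\Gamma)^{\hat{\alpha}}$, then averaging over $\hat{\Gamma}$ shows $\omega(a\lambda_g)=0$ for every $g\neq \iota$, so $\omega=\omega|_A\circ E_{\alpha}$; moreover $\omega|_A$ is an $\alpha$-invariant tracial state on $A$, hence equals $\tau_A$ since $A$ is monotracial. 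Therefore $T_1(A\rtimes_{\alpha}\Gamma)^{\hat{\alpha}}=\{\tau_A\circ E_{\alpha}\}$. Finally, $\pi_{\tau_A\circ E_{\alpha}}(A\rtimes_{\alpha}\Gamma)^{''}\cong \pi_{\tau_A}(A)^{''}\rtimes_{\tilde{\alpha}}\Gamma$ is the crossed product of the II$_1$ factor $\pi_{\tau_A}(A)^{''}$ by the finite group $\Gamma$, so its center is finite-dimensional and it has finitely many extremal tracial states. Applying the previous lemma to $B=A\rtimes_{\alpha}\Gamma$, $\Gamma$ replaced by $\hat{\Gamma}$, $\beta=\hat{\alpha}$, and $\tau_0=\tau_A\circ E_{\alpha}$, we conclude that the restriction map is an affine homeomorphism.
\end{proof}
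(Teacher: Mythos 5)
Your overall strategy is exactly the paper's: apply the preceding lemma to $B=A\rtimes_{\alpha}\Gamma$ with the dual action $\hat{\alpha}$ of $\hat{\Gamma}$ and $\tau_0=\tau_A\circ E_{\alpha}$. Your direct Fourier-coefficient computation showing $T_1(A\rtimes_{\alpha}\Gamma)^{\hat{\alpha}}=\{\tau_A\circ E_{\alpha}\}$ is a fine substitute for the paper's appeal to the Takesaki--Takai duality theorem, and your verification that $\pi_{\tau_A\circ E_{\alpha}}(A\rtimes_{\alpha}\Gamma)^{''}\cong\pi_{\tau_A}(A)^{''}\rtimes_{\tilde{\alpha}}\Gamma$ has finite-dimensional center (hence finitely many extremal tracial states) without assuming triviality of the characteristic invariant is correct and worth making explicit, since the proposition makes no such assumption.

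There is, however, one genuine error: your justification of the compactness of $T_1(A\rtimes_{\alpha}\Gamma)$. You claim that $A\rtimes_{\alpha}\Gamma$ is unital because $\Gamma$ is finite, but $A$ is not assumed unital --- indeed the algebras to which this proposition is ultimately applied, such as $A\otimes\mathcal{W}$, are stably projectionless and hence non-unital --- and for non-unital $A$ the crossed product $A\rtimes_{\alpha}\Gamma$ is non-unital: the implementing unitaries $\lambda_g$ live only in $M(A\rtimes_{\alpha}\Gamma)$, not in the crossed product itself. For a non-unital C$^*$-algebra the tracial state space need not be weak-$*$ compact, so this hypothesis of the lemma genuinely requires an argument. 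The paper obtains it from \cite[Lemma 2.2]{Na0}, using that $A$ is monotracial in the sense of having a unique tracial state and no unbounded traces. With that citation (or an equivalent argument) substituted for the unitality claim, the rest of your proof goes through.
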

\begin{proof}
By the outerness of $\alpha$, $A\rtimes_{\alpha}\Gamma$ is simple. 
\cite[Corollary 2.2.3]{Jones} implies that 
$\pi_{\tau_A\circ E_{\alpha}}(A\rtimes_{\alpha}\Gamma)^{''}\cong 
\pi_{\tau_{A}}(A)^{''}\rtimes_{\tilde{\alpha}}\Gamma$ 
has finitely many extremal tracial states. 
Since $A$ is monotracial, \cite[Lemma 2.2]{Na0} implies that $T_1(A\rtimes_{\alpha}\Gamma)$ 
is compact. Furthermore, we see that $T_1(A\rtimes_{\alpha}\Gamma)^{\hat{\alpha}}$ 
is a one point set by the Takesaki-Takai duality theorem. 
Applying Lemma \ref{lem:trace-spaces-crossed-products} to 
$B=A\rtimes_{\alpha}\Gamma$ and $\beta=\hat{\alpha}$, 
we obtain the conclusion. 
\end{proof}

The following corollary is an immediate consequence of the proposition above and the previous subsection. 

\begin{cor}
Let $A$ be a simple separable monotracial C$^*$-algebra, and let $\alpha$ be 
an outer action of a finite abelian group $\Gamma$ on $A$. 
Assume that the characteristic invariant of $\tilde{\alpha}$ is trivial. Then 
$T_1(A\rtimes_{\alpha}\Gamma)$ is an $|N(\tilde{\alpha})|$-simplex. 
\end{cor}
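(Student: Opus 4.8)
The plan is to combine the two immediately preceding results. First I would apply Proposition~\ref{pro:trace-spaces-crossed-products} to the outer action $\alpha$ of the finite abelian group $\Gamma$ on the simple separable monotracial C$^*$-algebra $A$, which gives an affine homeomorphism between $T_1(A\rtimes_{\alpha}\Gamma)$ and $T_1(\pi_{\tau_A\circ E_{\alpha}}(A\rtimes_{\alpha}\Gamma)^{''})$. Using the identification $\pi_{\tau_A\circ E_{\alpha}}(A\rtimes_{\alpha}\Gamma)^{''}\cong \pi_{\tau_{A}}(A)^{''}\rtimes_{\tilde{\alpha}}\Gamma$ recorded in this subsection, it then suffices to analyze the tracial state space of the von Neumann crossed product $M\rtimes_{\tilde{\alpha}}\Gamma$, where $M:=\pi_{\tau_{A}}(A)^{''}$ is a II$_1$ factor.

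Next I would invoke the discussion following Theorem~\ref{thm:jones}: since the characteristic invariant of $\tilde{\alpha}$ is trivial, the map $\Pi_v$ is an isomorphism of $\mathbb{C}N(\tilde{\alpha})$ onto the center $Z(M\rtimes_{\tilde{\alpha}}\Gamma)$ by \cite[Corollary 2.2.2]{Jones}. As observed there, this forces the extremal tracial states on $M\rtimes_{\tilde{\alpha}}\Gamma$ to be exactly the family $\{\tau_p \mid p\in P_{N(\tilde{\alpha})}\}$ indexed by the $|N(\tilde{\alpha})|$ minimal projections of $\mathbb{C}N(\tilde{\alpha})$, so that $T_1(M\rtimes_{\tilde{\alpha}}\Gamma)$ is an $|N(\tilde{\alpha})|$-simplex. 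Transporting this back through the affine homeomorphism of the first step yields that $T_1(A\rtimes_{\alpha}\Gamma)$ is an $|N(\tilde{\alpha})|$-simplex, which is the assertion.

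There is essentially no obstacle here: the statement is flagged in the excerpt as ``an immediate consequence of the proposition above and the previous subsection,'' and the proof is a two-line composition. The only point requiring a word of care is that Proposition~\ref{pro:trace-spaces-crossed-products} does provide a genuine affine homeomorphism (not merely a bijection), so that the simplex structure — in particular the extreme points and barycentric decomposition — is preserved; this is immediate from the proposition's conclusion. Thus I would simply write: ``By Proposition~\ref{pro:trace-spaces-crossed-products}, $T_1(A\rtimes_{\alpha}\Gamma)$ is affinely homeomorphic to $T_1(\pi_{\tau_{A}}(A)^{''}\rtimes_{\tilde{\alpha}}\Gamma)$, which is an $|N(\tilde{\alpha})|$-simplex by the discussion after Theorem~\ref{thm:jones}.''

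\begin{proof}
By Proposition~\ref{pro:trace-spaces-crossed-products}, the restriction map gives an affine homeomorphism from $T_1(\pi_{\tau_A\circ E_{\alpha}}(A\rtimes_{\alpha}\Gamma)^{''})$ onto $T_1(A\rtimes_{\alpha}\Gamma)$. Since $\pi_{\tau_A\circ E_{\alpha}}(A\rtimes_{\alpha}\Gamma)^{''}\cong \pi_{\tau_{A}}(A)^{''}\rtimes_{\tilde{\alpha}}\Gamma$ and the characteristic invariant of $\tilde{\alpha}$ is trivial, the discussion following Theorem~\ref{thm:jones} shows that $T_1(\pi_{\tau_{A}}(A)^{''}\rtimes_{\tilde{\alpha}}\Gamma)$ is an $|N(\tilde{\alpha})|$-simplex. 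Hence $T_1(A\rtimes_{\alpha}\Gamma)$ is an $|N(\tilde{\alpha})|$-simplex.
\end{proof}
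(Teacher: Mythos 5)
Your proof is correct and matches the paper's intended argument: the paper itself declares the corollary ``an immediate consequence of the proposition above and the previous subsection,'' which is precisely the composition you write out (the affine homeomorphism from Proposition~\ref{pro:trace-spaces-crossed-products} together with the identification of $Z(\pi_{\tau_A}(A)''\rtimes_{\tilde{\alpha}}\Gamma)$ with $\mathbb{C}N(\tilde{\alpha})$ via $\Pi_v$). No issues.
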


A probability measure $m$ on a finite set $P$ is said to have 
\textit{full support} if $m(p)>0$ for any $p\in P$. 

\begin{pro}\label{pro:realized-invariant}
Let $A$ be a simple separable monotracial C$^*$-algebra, and let $\alpha$ be 
an outer action of a finite abelian group $\Gamma$ on $A$. 
Assume that the characteristic invariant of $\tilde{\alpha}$ is trivial. 
If $m$ is a probability measure on $P_{N(\tilde{\alpha})}$ such that 
$i(\tilde{\alpha})=[m]$, then $m$ has full support. 
\end{pro}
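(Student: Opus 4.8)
The plan is to reduce the statement to a positivity property of the trace on the minimal projections of $\mathbb{C}N(\tilde\alpha)$, and then to establish that positivity via the von Neumann algebra picture. First I would unwind the hypothesis $i(\tilde\alpha)=[m]$: by definition of $i(\tilde\alpha)=[m_v(\tilde\alpha)]$ and of the equivalence relation $\sim$, there is a character $\eta$ of $N(\tilde\alpha)$ with $m=m_v(\tilde\alpha)\circ\partial(\eta)$. Since $\partial(\eta)$ is an automorphism of $\mathbb{C}N(\tilde\alpha)$, it permutes the set $P_{N(\tilde\alpha)}$ of minimal projections (concretely, $\partial(\eta)$ sends $\frac1{|N(\tilde\alpha)|}\sum_h\xi(h)h$ to $\frac1{|N(\tilde\alpha)|}\sum_h(\eta\xi)(h)h$, i.e.\ it acts as translation by $\eta$ on $\hat{N(\tilde\alpha)}\cong P_{N(\tilde\alpha)}$). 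Hence $m$ has full support if and only if $m_v(\tilde\alpha)$ has full support, and so it suffices to prove that $m_v(\tilde\alpha)$ has full support for one (equivalently every) choice of unitary representation $v$ of $N(\tilde\alpha)$ on $M:=\pi_{\tau_A}(A)^{''}$ implementing $\tilde\alpha|_{N(\tilde\alpha)}$.

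Next I would translate ``$m_v(\tilde\alpha)$ has full support'' into a concrete statement. By definition $m_v(\tilde\alpha)(p)=\tau_M(\Phi_v(p))$ for $p\in P_{N(\tilde\alpha)}$, where $\Phi_v$ is the algebra homomorphism $\mathbb{C}N(\tilde\alpha)\to M$ with $\Phi_v(h)=v_h$. Writing a typical minimal projection as $p_\xi=\frac1{|N(\tilde\alpha)|}\sum_{h\in N(\tilde\alpha)}\xi(h)h$ for $\xi\in\widehat{N(\tilde\alpha)}$, we must show $\tau_M\big(\frac1{|N(\tilde\alpha)|}\sum_h\xi(h)v_h\big)>0$ for every character $\xi$. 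Now $\Phi_v(p_\xi)$ is a nonzero projection in $M$ (its image $\Pi_v$ lands in the center $Z(M\rtimes_{\tilde\alpha}\Gamma)$ and is an isomorphism onto it by \cite[Corollary 2.2.2]{Jones}, so in particular $\Phi_v$ is injective on $\mathbb{C}N(\tilde\alpha)$, hence each $\Phi_v(p_\xi)$ is a nonzero projection), and $\tau_M$ is a faithful normal tracial state on the factor $M$. Therefore $\tau_M(\Phi_v(p_\xi))>0$, which is exactly the required full support.

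Assembling these two observations gives the result, so I would write it as: fix $v$ a unitary representation of $N(\tilde\alpha)$ on $M$ with $\tilde\alpha_h=\mathrm{Ad}(v_h)$ (this exists by the discussion preceding Proposition~\ref{pro:non-trivial-characteristic}, using that the characteristic invariant of $\tilde\alpha$ is trivial); note $\Phi_v$ is injective because $\Pi_v$ is an isomorphism onto $Z(M\rtimes_{\tilde\alpha}\Gamma)$; deduce that each $\Phi_v(p)$, $p\in P_{N(\tilde\alpha)}$, is a nonzero projection in $M$; apply faithfulness of $\tau_M$ to conclude $m_v(\tilde\alpha)$ has full support; and finally transport this through the character $\eta$ relating $m$ and $m_v(\tilde\alpha)$ to conclude $m$ has full support.

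The only genuine point requiring care — the ``main obstacle'', though it is mild — is the injectivity of $\Phi_v$, i.e.\ that no nonzero element of $\mathbb{C}N(\tilde\alpha)$ maps to $0$ in $M$. This is where outerness enters essentially: it is equivalent to linear independence of $\{v_h\}_{h\in N(\tilde\alpha)}$ in $M$, which fails in general but holds here because $\tilde\alpha$ restricted to $\Gamma\setminus N(\tilde\alpha)$ consists of properly outer automorphisms and, on $N(\tilde\alpha)$, the triviality of the characteristic invariant forces the $v_h$ to be a genuine (non-projective) representation whose spectral projections $\Phi_v(p_\xi)$ are nonzero. I would simply cite \cite[Corollary 2.2.2]{Jones}, which gives that $\Pi_v$ — and hence $\Phi_v$, since $\Phi_v(x)=\Pi_v(x)$ multiplied by a unitary in a way that preserves being nonzero, or more directly since $\Pi_v(h)=v_h\lambda_h^*$ and $\{\lambda_h\}$ implement $\tilde\alpha$ — is an isomorphism onto the center; once injectivity of $\Phi_v$ is in hand the rest is immediate from faithfulness of the trace on a II$_1$ factor.
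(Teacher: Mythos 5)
Your reduction modulo the equivalence relation $\sim$ is fine, but the core step of your argument --- deducing that each $\Phi_v(p_\xi)$ is a nonzero projection in $M=\pi_{\tau_A}(A)''$ from the fact that $\Pi_v$ is an isomorphism onto $Z(M\rtimes_{\tilde{\alpha}}\Gamma)$ --- does not work, and this is exactly where the proposition has content. The maps $\Phi_v(h)=v_h$ and $\Pi_v(h)=v_h\lambda_h^*$ differ by the $h$-dependent factors $\lambda_h^*$, so they are not related by multiplication by a single unitary, and injectivity of $\Pi_v$ does not pass to $\Phi_v$. Concretely, let $\Gamma=N=\mathbb{Z}_4$ act on the hyperfinite II$_1$ factor by $\delta_j=\mathrm{Ad}(u^j)$, where $u$ is a unitary with $u^4=1$ whose spectrum is $\{1,i,-1\}$ but omits $-i$: then the characteristic invariant is trivial, $N(\delta)=\mathbb{Z}_4$, each $\delta_j$ with $j\neq 0$ is nontrivial, and $\Pi_v$ is still an isomorphism onto the center of the crossed product, yet the spectral projection $\Phi_v(p_\xi)$ corresponding to the eigenvalue $-i$ is zero. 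So the statement is simply false at the purely von Neumann algebraic level --- which is why Jones' classification admits arbitrary probability measures as the invariant $i(\delta)$, and why Lemma \ref{lem:jones} can realize measures without full support (there the resulting action has $N(\alpha)=N\neq\{\iota\}$, hence is not outer on the C$^*$-algebra). Your remark that ``outerness enters essentially'' points in the right direction, but proper outerness of $\tilde{\alpha}$ on $\Gamma\setminus N(\tilde{\alpha})$ has no bearing on the linear independence of $\{v_h\}_{h\in N(\tilde{\alpha})}$, and a genuine (non-projective) unitary representation can perfectly well have vanishing spectral projections.

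The missing ingredient is C$^*$-algebraic: outerness of $\alpha$ on $A$ itself (i.e.\ $N(\alpha)=\{\iota\}$, including on the elements of $N(\tilde{\alpha})$ where $\tilde{\alpha}$ is inner) makes $A\rtimes_{\alpha}\Gamma$ simple by Kishimoto's theorem \cite{K}, hence every tracial state on it is faithful. By Proposition \ref{pro:trace-spaces-crossed-products} the extremal trace $\tau_p$ of $M\rtimes_{\tilde{\alpha}}\Gamma$ attached to $p\in P_{N(\tilde{\alpha})}$ restricts to a tracial state on $A\rtimes_{\alpha}\Gamma$, and the identity $\tau_p(e_{\alpha})=\frac{|N(\tilde{\alpha})|}{|\Gamma|}\tau_M(\Phi_v(p))$ from Section 2.2 converts $m(p)$ into the value of this faithful trace on the nonzero projection $e_{\alpha}\in M(A\rtimes_{\alpha}\Gamma)$, which is therefore strictly positive. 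That is the paper's proof; your argument never invokes simplicity of the crossed product or any other consequence of $N(\alpha)=\{\iota\}$, and it cannot be repaired without some such input.
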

\begin{proof}
By Proposition \ref{pro:trace-spaces-crossed-products} and the previous subsection, we may assume 
that 
$m(p)=\frac{|\Gamma|}{|N(\delta)|}\tau_{p}(e_{\alpha})$ for any $p\in P_{N(\tilde{\alpha})}$ 
where $\tau_{p}$ is the extremal tracial state on 
$A\rtimes_{\alpha}\Gamma$ corresponding to $p$. 
Since $\alpha$ is outer, $A\rtimes_{\alpha}\Gamma$ is simple. Hence 
we have $\tau_{p}(e_{\alpha})>0$ for any  $p\in P_{N(\tilde{\alpha})}$ because 
$\tau_{p}$ is faithful and $e_{\alpha}$ is a non-zero projection in 
$M(A\rtimes_{\alpha}\Gamma)$. Therefore we obtain the conclusion. 
\end{proof}

\subsection{Kirchberg's relative central sequence C$^*$-algebras} 

Fix a free ultrafilter $\omega$ on $\mathbb{N}$. Let $A$ and $B$ be C$^*$-algebras, and let 
$\Phi$ be a homomorphism from $A$ to $B$. Put 
$$
B^{\omega}:=\ell^{\infty}(\mathbb{N}, B)/\{\{x_n\}_{n\in\mathbb{N}}\in \ell^{\infty}(\mathbb{N}, B)\; 
|\;  \lim_{n\to\omega} \|x_n\|=0\} 
$$
and we regard $B$ as a C$^*$-subalgebra of $B^{\omega}$ consisting of equivalence classes of 
constant sequences.  
We denote by $(x_n)_n$ a representative of an element in $B^{\omega}$. 
Set 
$$
F(\Phi (A), B)=B^{\omega}\cap \Phi(A)^{\prime}/ \{(x_n)_n\in B^{\omega}\cap \Phi(A)^{\prime}\; |
\; (x_n \Phi(a))_n=0\; \text{for any}\; a\in A\} 
$$
and we call it \textit{Kirchberg's relative central sequence C$^*$-algebra}. 
If $A=B$ and $\Phi=\mathrm{id}_A$, then we denote $F(\Phi(A), B)$ by $F(B)$. 
Every action $\alpha$ of a discrete group $\Gamma$ on $B$ with 
$\alpha_g(\Phi(A))=\Phi(A)$ for any $g\in\Gamma$ induces an action on $F(\Phi(A), B)$. 
We denote it by the same symbol $\alpha$ for simplicity unless otherwise specified. 

Let $A$ be a simple separable non-type I nuclear monotracial C$^*$-algebra and $B$ a 
monotracial C$^*$-algebra with strict comparison, and let $\Phi$ be a homomorphism from $A$ to $B$. 
Assume that $\tau_{B}$ is faithful and $\tau_A=\tau_B\circ \Phi$. 
Then $F(\Phi(A), B)$ has a tracial state $\tau_{B, \omega}$ such that 
$\tau_{B,\omega}([(a_n)_n])=\lim_{n\to\omega}\tau_B(a_n)$ for any $[(a_n)_n]\in F(\Phi(A), B)$ by 
\cite[Proposition 2.1]{Na4}. 
Put 
$$
\mathcal{M}:= \ell^{\infty}(\mathbb{N}, \pi_{\tau_B}(B)^{''})/
\{\{x_n\}_{n\in\mathbb{N}}\in \ell^{\infty}(\mathbb{N}, \pi_{\tau_B}(B)^{''})\; |\; 
\lim_{n\to\omega}\tilde{\tau}_B(x_n^*x_n)=0\}
$$
where $\tilde{\tau}_B$ is the unique normal extension of $\tau_B$ on $\pi_{\tau_B}(B)^{''}$, and let 
$$
\mathcal{M}(\Phi(A), B):= \mathcal{M}\cap \pi_{\tau_B}(\Phi(A))^{\prime}.
$$
If $A=B$ and $\Phi=\mathrm{id}_A$, then we denote $\mathcal{M}(\Phi(A), B)$ by 
$\mathcal{M}(B)$. Note that $\mathcal{M}(B)$ is equal to the von Neumann algebraic central sequence 
algebra of $\pi_{\tau_{B}}(B)^{''}$. For any homomorphism $\Phi$ from $A$ to $B$, 
$\mathcal{M}(B)$ is a subalgebra of $\mathcal{M}(\Phi(A), B)$. 

Let $\beta$ be an action of a finite abelian group $\Gamma$ on $B$ such that 
$\beta_g(\Phi(A))=\Phi(A)$ for any $g\in\Gamma$.
Then $\beta$ induces an action $\tilde{\beta}$ on $\mathcal{M}(\Phi(A), B)$. 
By \cite[Proposition 3.11]{Na5}, we have the following proposition. Note that \cite[Proposition 3.11]{Na5} 
is based on Matui and Sato's techniques \cite{MS}, \cite{MS2} and \cite{MS3} (with pioneering works 
\cite{Sa0} and \cite{Sa}). 
See also \cite{Sa2} and \cite{Sza6}. 

\begin{pro}\label{pro:strict-comparison}
With notation as above, assume that $\mathcal{M}(\Phi(A), B)^{\tilde{\beta}}$ is a factor 
and $\beta_g|_{\Phi(A)}$ is outer for any $g\in\Gamma\setminus \{\iota\}$. If $a$ and $b$ 
are positive elements in 
$F(\Phi(A), B)^{\beta}$ satisfying $d_{\tau_{B},\omega}(a)< d_{\tau_{B}, \omega}(b)$, then there exists 
an element $r$ in $F(\Phi(A), B)^{\beta}$ such that $r^*br=a$. 
\end{pro}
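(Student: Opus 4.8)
The plan is to reduce the claimed comparison property in $F(\Phi(A),B)^{\beta}$ to the von Neumann algebraic side, where a Murray--von Neumann type argument in the factor $\mathcal{M}(\Phi(A),B)^{\tilde\beta}$ does the work, and then to transfer the resulting relation back. First I would invoke the standard surjection: by \cite[Proposition 2.1]{Na4} and the hypotheses ($B$ monotracial with strict comparison, $\tau_B$ faithful, $\tau_A=\tau_B\circ\Phi$, $A$ simple separable non-type~I nuclear) there is a canonical surjective homomorphism $F(\Phi(A),B)\to\mathcal{M}(\Phi(A),B)$ which is $\Gamma$-equivariant and intertwines $\tau_{B,\omega}$ with the trace on $\mathcal{M}(\Phi(A),B)$ coming from $\tilde\tau_B$; on positive elements it carries the rank function $d_{\tau_B,\omega}$ to the appropriate support-projection trace. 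Taking fixed points, $F(\Phi(A),B)^{\beta}\to\mathcal{M}(\Phi(A),B)^{\tilde\beta}$ is still surjective (finite group, so the averaging projection is completely positive and surjects onto the fixed-point algebra on each side, compatibly). This is exactly where \cite[Proposition 3.11]{Na5} and the Matui--Sato machinery enter, and it is the technical heart: one needs the fixed-point algebra $F(\Phi(A),B)^{\beta}$ to have the relevant structure (a weakly dense embedding, property (SI)-type transfer) so that the von Neumann comparison lifts.

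Next, working inside the II$_1$ factor $N:=\mathcal{M}(\Phi(A),B)^{\tilde\beta}$: let $\bar a,\bar b$ be the images of $a,b$. The inequality $d_{\tau_B,\omega}(a)<d_{\tau_B,\omega}(b)$ translates, via the standard dictionary, into a strict inequality of traces of support projections, $\tilde\tau(s(\bar a))<\tilde\tau(s(\bar b))$ (here $\tilde\tau$ is the trace on $N$, which is a factor so the trace is unique and faithful). In a finite factor, $s(\bar a)$ is then Murray--von Neumann subequivalent to $s(\bar b)$, so there is a partial isometry $w\in N$ with $w^*w=s(\bar a)$ and $ww^*\le s(\bar b)$. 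Using functional calculus one produces $\bar r\in N$ with $\bar r^*\bar b\bar r=\bar a$: concretely take $\bar r=\bar b^{-1/2}(\text{restricted to its support})\, w\, \bar a^{1/2}$, or more carefully approximate $\bar b^{-1/2}$ by $f_\varepsilon(\bar b)\bar b^{1/2}\to s(\bar b)$ and pass to a limit, exploiting that $N$ is a von Neumann algebra so the relevant limits exist in it. One gets an exact solution $\bar r^*\bar b\bar r=\bar a$ in $N$.

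Finally I would lift. Choose any $r_0\in F(\Phi(A),B)^{\beta}$ mapping to $\bar r$; then $r_0^*br_0-a$ maps to $0$ in $\mathcal{M}(\Phi(A),B)^{\tilde\beta}$, i.e. $\tau_{B,\omega}\big((r_0^*br_0-a)^*(r_0^*br_0-a)\big)=0$ and moreover $r_0^*br_0-a$ lies in the trace-kernel ideal $J_{B,\omega}:=\ker(F(\Phi(A),B)\to\mathcal{M}(\Phi(A),B))$, which is a $\beta$-invariant (hence $\beta$-fixed on its fixed-point part) $\sigma$-ideal. A standard correction argument — of exactly the kind used in Matui--Sato and in \cite[Proposition 3.11]{Na5} — then upgrades the approximate solution to an exact one: using that $a\in F(\Phi(A),B)^{\beta}$ and that $J_{B,\omega}\cap F(\Phi(A),B)^{\beta}$ is a $\sigma$-ideal with the right excision properties, one perturbs $r_0$ within $F(\Phi(A),B)^{\beta}$ to an $r$ with $r^*br=a$ on the nose; the point is that $d_{\tau_B,\omega}(a)<d_{\tau_B,\omega}(b)$ gives genuine ``room'' below $b$, so the perturbation can be absorbed. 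The main obstacle is this last exact-lifting step together with establishing the surjectivity $F(\Phi(A),B)^{\beta}\to N$ with controlled kernel; both rely essentially on the factoriality of $N=\mathcal{M}(\Phi(A),B)^{\tilde\beta}$ and the outerness of $\beta_g|_{\Phi(A)}$ for $g\ne\iota$, which are precisely the two running hypotheses, and on the cited \cite[Proposition 3.11]{Na5}. I expect the proof to be short once \cite[Proposition 3.11]{Na5} is quoted, amounting to: apply that proposition (or its proof) to produce the surjection with $\sigma$-ideal kernel, do the elementary finite-factor comparison, and lift.
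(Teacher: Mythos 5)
The paper does not actually prove this proposition: it is imported verbatim by the citation ``By \cite[Proposition 3.11]{Na5}, we have the following proposition,'' so the only honest comparison is with the Matui--Sato-type argument that the cited result encapsulates. Your outline follows exactly that expected route (pass to the tracial von Neumann central sequence algebra, compare in the factor $\mathcal{M}(\Phi(A),B)^{\tilde{\beta}}$, lift back through the trace-kernel ideal), and the surjectivity of $F(\Phi(A),B)^{\beta}\to\mathcal{M}(\Phi(A),B)^{\tilde{\beta}}$ via averaging over the finite group is fine.

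There is, however, a genuine gap in your final lifting step, and it shows up in the fact that your argument never uses the standing hypothesis that $B$ has strict comparison. The trace-kernel ideal $J$ is a $\sigma$-ideal, but the naive correction you describe does not close: if $j:=r_0^*br_0-a\in J$ and $e\in J_+\cap\{a,b,r_0\}'$ satisfies $ej=j$, then $(1-e)^{1/2}r_0^*br_0(1-e)^{1/2}=(1-e)a$, not $a$, so quotienting by $J$ and ``absorbing the perturbation'' does not by itself produce an exact solution. The W$^*$-comparison only controls $b$ away from $0$ (your own computation with $f_\varepsilon(\bar b)$ shows you must land the partial isometry under a spectral projection $\chi_{[\delta,\infty)}(\bar b)$); the part of $b$ with small spectrum has to be handled at the C$^*$-level, which is precisely where the strict comparison of $B$ (equivalently, comparison in $F(\Phi(A),B)$ relative to $d_{\tau_B,\omega}$, in the style of property (SI) and the $(b-\varepsilon)_+$ cut-down argument of Matui--Sato) enters, followed by a diagonal/reindexing argument over $\omega$ to convert the family of $\varepsilon$-approximate solutions into an exact $r$. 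Without that ingredient the strict inequality $d_{\tau_B,\omega}(a)<d_{\tau_B,\omega}(b)$ is not actually exploited where it is needed, and the proof as written would equally ``prove'' the false statement with $\leq$ in place of $<$.
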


\section{Approximate representability and classification}\label{sec:app}

We shall recall the definition of the Rohlin property and approximate representability for 
finite abelian group actions.

\begin{Def}
Let $A$ be a separable C$^*$-algebra, and let $\alpha$ be an action of a finite abelian group 
$\Gamma$ on $A$. \ \\
(i) We say that $\alpha$ has the \textit{Rohlin property} if there exists a partition of unity 
$\{p_g\}_{g\in\Gamma}$ consisting of projections in $F(A)$ such that 
$$
\alpha_g(p_h)=p_{gh}
$$ 
for any $g,h\in \Gamma$. \ \\
(ii) We say that $\alpha$ is \textit{approximately representable} if there exists a 
map $w$ from $\Gamma$ to $(A^{\alpha})^{\omega}$ such that 
the map $u$ from $\Gamma$ to $F(A^{\alpha})$ given by $u_g=[w_g]$ is a unitary representation of 
$\Gamma$ and 
$$
\alpha_g(a)=w_gaw_g^* \quad \text{in} \quad A^{\omega}
$$
for any $g\in\Gamma$ and $a\in A$. 
\end{Def}

We refer the reader to \cite{I1}, \cite{GSan1}, 
\cite{Na0} and \cite{San} for basic properties of the Rohlin property and 
approximate representability. See \cite{GHS} for some generalization. 

\begin{pro}\label{pro:rohlin-outer}
Let $A$ be a separable C$^*$-algebra, and let $\alpha$ be an action of a finite group 
$\Gamma$ on $A$. Assume that $\tau$ is an $\alpha$-invariant tracial state on $A$. 
If $\alpha$ has the Rohlin property, then $\tilde{\alpha}$ on 
$\pi_{\tau}(A)^{''}$ is outer. 
\end{pro}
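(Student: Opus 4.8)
The plan is to argue by contradiction, exploiting the Rohlin projections to show that any would-be implementing unitary for $\tilde\alpha_g$ must be a scalar, which forces $g=\iota$. So suppose $\tilde\alpha$ on $M:=\pi_\tau(A)''$ is not outer; pick $g\in\Gamma\setminus\{\iota\}$ and a unitary $v\in M$ with $\tilde\alpha_g=\mathrm{Ad}(v)$. Since $g$ has finite order (say order $k$), after the standard trick of averaging $v,\tilde\alpha_g(v),\dots,\tilde\alpha_g^{k-1}(v)$ up to a phase we may assume $v$ is a unitary with $v^k$ a scalar, but in fact we will not even need this refinement: the key point is simply that $v\in M$ implements $\tilde\alpha_g$.

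First I would transport the Rohlin projections into the von Neumann algebra. The partition of unity $\{p_h\}_{h\in\Gamma}\subset F(A)$ lifts to representing sequences of positive contractions in $A$; passing to the GNS picture and using that $\tau_\omega$ on $F(A)$ dominates the relevant traces, I get projections $\{q_h\}_{h\in\Gamma}$ in the central sequence algebra $\mathcal{M}(A)=\mathcal{M}\cap\pi_\tau(A)'$ (more precisely, in $M^\omega\cap M'$ with $\sum_h q_h=1$), satisfying $\tilde\alpha_g(q_h)=q_{gh}$ and $\tilde\tau_\omega(q_h)=1/|\Gamma|$ for every $h$. The crucial features are that these $q_h$ are mutually orthogonal, they commute with $M$ (hence with $v$, viewing $v\in M\subset M^\omega$), and $\tilde\alpha_g$ permutes them freely: since $g\neq\iota$ the map $h\mapsto gh$ has no fixed point, so $q_h$ and $\tilde\alpha_g(q_h)=q_{gh}$ are orthogonal for every $h$.

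Now I would derive the contradiction. For any $x\in M^\omega$ one has $\tilde\alpha_g(x)=vxv^*$ (the automorphism of $M^\omega$ induced by $\tilde\alpha_g$ restricts to $\mathrm{Ad}(v)$ since $v\in M$ and the Rohlin-type elements are handled by the induced action on the central sequence algebra). Applying this with $x=q_h$ gives $q_{gh}=\tilde\alpha_g(q_h)=vq_hv^*$. Then compute, using traciality of $\tilde\tau_\omega$ and $q_hq_{gh}=0$:
$$
0=\tilde\tau_\omega(q_{gh}q_h)=\tilde\tau_\omega(vq_hv^*q_h).
$$
On the other hand, since $v$ commutes with $q_h$ (as $q_h\in M'$), we get $vq_hv^*q_h=q_hvv^*q_h=q_h$, whence $\tilde\tau_\omega(q_h)=0$, contradicting $\tilde\tau_\omega(q_h)=1/|\Gamma|>0$. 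Therefore no such $v$ exists and $\tilde\alpha_g$ is outer for all $g\neq\iota$.

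The main obstacle is the bookkeeping in the first step: one must be careful that the Rohlin projections, which a priori live in $F(A)=A^\omega\cap A'/\mathrm{(annihilator)}$, descend to genuine mutually orthogonal projections in the tracial central sequence algebra $M^\omega\cap M'$ with the stated trace values, and that the action $\tilde\alpha$ there is compatible with $\mathrm{Ad}(v)$ on $M^\omega$. This uses that $A$ is separable (so representing sequences exist), that $\tau$ is faithful on $A$ and $\alpha$-invariant (so the GNS construction and the induced trace $\tilde\tau_\omega$ behave well), and a standard approximation/excision argument to replace the lifted positive contractions by exact projections of the correct trace. Everything after that is the short trace computation above.
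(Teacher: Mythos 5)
Your proposal is correct and follows essentially the same route as the paper: push the Rohlin partition of unity from $F(A)$ into the von Neumann algebraic central sequence algebra of $\pi_\tau(A)''$, and then use that an inner automorphism $\mathrm{Ad}(v)$ with $v\in\pi_\tau(A)''$ acts trivially there, which contradicts the free permutation $q_h\mapsto q_{gh}$ of mutually orthogonal projections of trace $1/|\Gamma|$. The paper phrases the final contradiction as $P_g=\tilde\alpha_g(P_\iota)=P_\iota$ forcing $g=\iota$ rather than via your trace computation, but this is only a cosmetic difference.
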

\begin{proof}
Let $\mathcal{M}_{\omega}$ be a von Neumann algebraic central sequence algebra of 
$\pi_{\tau}(A)^{''}$. Then there exists a unital homomorphism from $F(A)$ to 
$\mathcal{M}_{\omega}$ (see, for example, \cite[Proposition 2.2]{Na2}). 
Hence there exists 
a partition of unity $\{P_g\}_{g\in\Gamma}$ consisting of projections in $\mathcal{M}_{\omega}$ 
such that 
$$
\tilde{\alpha}_g(P_h)=P_{gh}
$$ 
for any $g,h\in \Gamma$ since $\alpha$ has the Rohlin property. This shows that $\tilde{\alpha}$ 
is outer. Indeed, if $\tilde{\alpha}_g$ is an inner automorphism of 
$\pi_{\tau}(A)^{''}$, then $\tilde{\alpha}_g(P_{\iota})=P_{\iota}$. 
Therefore we have $P_{g}=P_{\iota}$, and hence $g=\iota$.  
\end{proof}

\begin{pro}\label{pro:unitary}
Let $A$ be a separable C$^*$-algebra, and let $\alpha$ be an action of a finite group 
$\Gamma$ on $A$. 
For any $v\in (A^{\alpha})^{\omega}\cap (A^{\alpha})^{\prime}$, if 
$[v]$ is a unitary element in $F(A^{\alpha})$, then $v^*va=vv^*a=av^*v=avv^*=a$ for any $a\in A 
\subset A^{\omega}$. 
\end{pro}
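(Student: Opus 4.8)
The plan is to translate the hypothesis into the assertion that $v^{*}v$ and $vv^{*}$ act as the identity on the fixed point algebra $A^{\alpha}$, and then to transport this fact across the group-averaging projection $\Theta\colon A\to A^{\alpha}$, $\Theta(a):=\frac{1}{|\Gamma|}\sum_{g\in\Gamma}\alpha_{g}(a)$, using only positivity. The step that carries information from $A^{\alpha}$ to all of $A$ is the one nontrivial point; everything else is bookkeeping.

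First I would choose the approximate unit with care. Since $A$ is separable, pick a quasicentral approximate unit $(f_{n})_{n}$ of $A$ and set $h_{n}:=\Theta(f_{n})\in A^{\alpha}$. Because each $\alpha_{g}$ is an automorphism, a routine estimate shows that $(h_{n})_{n}$ is again a positive contractive approximate unit of $A$ which is quasicentral in $A$, hence a quasicentral approximate unit of $A^{\alpha}$ as well. Therefore $e:=(h_{n})_{n}$ defines an element of $(A^{\alpha})^{\omega}\cap (A^{\alpha})'$ representing the unit of $F(A^{\alpha})$, and, what matters for us, $ea=ae=a$ in $A^{\omega}$ for \emph{every} $a\in A$, not merely for $a\in A^{\alpha}$. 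The hypothesis now unpacks as follows: since $[v]$ is a unitary of the unital C$^{*}$-algebra $F(A^{\alpha})$ with unit $[e]$, we have $[v^{*}v]=[e]=[vv^{*}]$, which by the definition of $F(A^{\alpha})$ says exactly that $v^{*}v-e$ and $vv^{*}-e$ annihilate $A^{\alpha}$; that is, $(v^{*}v)b=(vv^{*})b=b$ in $A^{\omega}$ for all $b\in A^{\alpha}$, and, taking adjoints (both operators are positive), $b(v^{*}v)=b(vv^{*})=b$ as well.

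The key step is to upgrade ``$A^{\alpha}$'' to ``$A$''. Put $w:=v^{*}v-e$, a self-adjoint element of $A^{\omega}$ with $wb=bw=0$ for all $b\in A^{\alpha}$. Given $a\in A$, note that $\Theta(aa^{*})\in A^{\alpha}$ and that
\[
0\le aa^{*}\le |\Gamma|\,\Theta(aa^{*}),
\]
since $|\Gamma|\,\Theta(aa^{*})-aa^{*}=\sum_{g\ne\iota}\alpha_{g}(aa^{*})\ge 0$. Conjugating this operator inequality by the self-adjoint element $w$, and using that $w$ annihilates $\Theta(aa^{*})\in A^{\alpha}$, gives in $A^{\omega}$
\[
0\le w\,aa^{*}\,w\le |\Gamma|\,w\,\Theta(aa^{*})\,w=0,
\]
so $w\,aa^{*}\,w=0$ and hence $\|wa\|^{2}=\|w\,aa^{*}\,w^{*}\|=\|w\,aa^{*}\,w\|=0$, i.e. $wa=0$ in $A^{\omega}$ and thus $v^{*}v\,a=ea=a$. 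The identical computation with $vv^{*}-e$ in place of $w$ gives $vv^{*}a=a$. Finally, applying the identities $v^{*}v\,x=x$ and $vv^{*}x=x$ to $x=a^{*}$ and taking adjoints, using the self-adjointness of $v^{*}v$ and $vv^{*}$, yields $a\,v^{*}v=a$ and $a\,vv^{*}=a$.

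I expect the only genuine obstacle to be this transfer from $A^{\alpha}$ to $A$, and the device that dissolves it is the elementary domination $aa^{*}\le |\Gamma|\,\Theta(aa^{*})$ together with conjugation by a self-adjoint element. The remaining ingredients — producing one approximate unit that is simultaneously quasicentral and an approximate unit of both $A$ and $A^{\alpha}$ (which the averaging $\Theta$ supplies in a single step), and the fact that $F(A^{\alpha})$ is unital with unit represented by any quasicentral approximate unit of $A^{\alpha}$ — are standard.
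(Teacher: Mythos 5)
Your proof is correct. It rests on the same underlying fact as the paper's --- that for finite $\Gamma$ the inclusion $A^{\alpha}\subset A$ is nondegenerate, which is exactly what the averaging map $\Theta$ delivers --- but the execution differs. The paper takes an \emph{arbitrary} approximate unit $\{h_n\}$ of $A^{\alpha}$, notes $v^*vh_n=h_n$ because $[v]$ is a unitary of $F(A^{\alpha})$, cites nondegeneracy to conclude $\{h_n\}$ is automatically an approximate unit of $A$, and finishes with the one-line estimate $\|v^*va-a\|\le\|v^*v\|\,\|a-h_na\|+\|h_na-a\|\to 0$. You instead manufacture a particular approximate unit by averaging a quasicentral one of $A$, and replace the norm estimate by a positivity argument: conjugating $aa^*\le|\Gamma|\,\Theta(aa^*)$ by the annihilating element $w=v^*v-e$. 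That conjugation step is, in substance, the standard proof of the nondegeneracy lemma the paper merely invokes, so your write-up is more self-contained; the price is extra machinery (Arveson's quasicentral approximate units, identifying the unit of $F(A^{\alpha})$) that the direct estimate avoids --- in particular, quasicentrality is never actually needed, since any approximate unit of $A^{\alpha}$ satisfies $v^*vh_n=h_n$ and the triangle inequality does the rest.
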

\begin{proof}
Let $\{h_n\}_{n=1}^{\infty}$ be an approximate unit for $A^{\alpha}$. 
Then $v^*vh_n=h_n$ in $A^{\omega}$ for any $n\in\mathbb{N}$ because $[v]$ is a unitary element in 
$F(A^{\alpha})$. 
Since $\Gamma$ is a finite group, $A^{\alpha}\subset A$ is a nondegenerate inclusion. 
Hence $\{h_n\}_{n=1}^{\infty}$ is an approximate unit for $A$. Therefore, for any $a\in A$,  
we have 
$$
\| v^*va-a\| =\|v^*va- v^*vh_na+ v^*vh_na-a\| \leq \|v^*v\| \|a-h_na\|+ \| h_na-a\|\to 0
$$
as $n\to \infty$. Consequently, $v^*va=a$. Similar arguments show 
$vv^*a=av^*v=avv^*=a$. 
\end{proof}

Let $A$ be a simple separable nuclear monotracial C$^*$-algebra, and let $\alpha$ be 
an outer action of a finite abelian group $\Gamma$ on $A$. We shall consider the action 
$\gamma:=\alpha\otimes\mathrm{id}_\mathcal{W}$ on $A\otimes\mathcal{W}$. 
We denote by $M_{n^{\infty}}$ the uniformly hyperfinite (UHF) algebra of type 
$n^{\infty}$. 
The following lemma is based on \cite[Lemma 3.10]{I1} and \cite[Proposition 2.1.3]{sut}. 

\begin{lem}\label{lem:sutherland}
Let $A$ be a separable C$^*$-algebra, and let $\alpha$ be 
an action of a finite abelian group $\Gamma$ on $A$ and put 
$\gamma=\alpha\otimes\mathrm{id}_{\mathcal{W}}$. Assume that for any $g\in\Gamma$, there 
exists an element $v_g$ in $((A\otimes\mathcal{W})^{\gamma})^{\omega}$ 
such that 
$$
\gamma_g(a)=v_gav_g^* \quad \text{in} \quad (A\otimes\mathcal{W})^{\omega}
$$
for any $a\in A\otimes\mathcal{W}$ and $[v_{g}]$ is a unitary element  in 
$F((A\otimes\mathcal{W})^{\gamma})$. 
Then $\gamma$ on 
$A\otimes\mathcal{W}$ is approximately representable. 
\end{lem}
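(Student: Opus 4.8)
The plan is to upgrade the hypothesis — that for each $g\in\Gamma$ we have a \emph{single} unitary-in-$F$ element $v_g$ implementing $\gamma_g$ — into the existence of a genuine unitary \emph{representation} $u$ of $\Gamma$ in $F((A\otimes\mathcal{W})^{\gamma})$ lifting to $((A\otimes\mathcal{W})^{\gamma})^{\omega}$ and still implementing $\gamma$. The point is that approximate representability demands the multiplicativity $u_gu_h=u_{gh}$ exactly in $F$, whereas the hypothesis only gives us the $v_g$'s one at a time with no compatibility. The two tools available for fixing this are: (1) the $\mathcal{W}$-stability, which via Kirchberg-type arguments lets one absorb a copy of $\mathcal{W}$ (or of a matrix algebra / a suitable finite-dimensional or UHF-type building block) into $F((A\otimes\mathcal{W})^{\gamma})$, providing room to perturb; and (2) the fact that $[v_g]$ being unitary forces, by Proposition~\ref{pro:unitary}, the relations $v_g^*v_g a = v_gv_g^* a = a$ for all $a\in A\otimes\mathcal{W}$, so the $v_g$ behave like honest unitaries against the algebra even though they live in a corner-like object. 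This is exactly the setting of \cite[Lemma 3.10]{I1} and \cite[Proposition 2.1.3]{sut}, which the lemma cites.

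The key steps, in order, would be: First, observe that $w_g := v_g$ already satisfies the covariance equation $\gamma_g(a) = w_g a w_g^*$ in $(A\otimes\mathcal{W})^{\omega}$, so only the representation property of $g\mapsto [w_g]$ in $F((A\otimes\mathcal{W})^{\gamma})$ is at stake. Second, compute the ``2-cocycle'' defect: for $g,h\in\Gamma$, the element $[v_g\gamma_g(v_h)v_{gh}^*]$ — or rather $[v_gv_hv_{gh}^*]$, using that $v_h\in ((A\otimes\mathcal{W})^{\gamma})^{\omega}$ so $\gamma_g(v_h)$ and $v_h$ agree in the relevant quotient — commutes with $A\otimes\mathcal{W}$ and implements the trivial automorphism, hence lies in the center of (a suitable ultrapower of) $F((A\otimes\mathcal{W})^{\gamma})$; since $F$ of a $\mathcal{W}$-absorbing algebra has trivial center / is sufficiently large, one gets a scalar-valued or at worst ``small'' obstruction $\sigma(g,h)$. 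Third, since $\Gamma$ is a \emph{finite abelian} group, $H^2(\Gamma,\mathbb{T})$ obstructions can be killed: one uses the $\mathcal{W}$-stability to find a path/sequence of unitaries in $F((A\otimes\mathcal{W})^{\gamma})$ correcting the $v_g$ to $w_g'$ with $[w_g'][w_h'] = [w_{gh}']$ exactly, while preserving the covariance equation (the correcting unitaries are taken in $F((A\otimes\mathcal{W})^{\gamma})$ itself, hence commute with $A\otimes\mathcal{W}$ and do not disturb $\gamma_g(a)=w_g' a (w_g')^*$). Finally, lift the resulting unitary representation $u$ of $\Gamma$ in $F((A\otimes\mathcal{W})^{\gamma})$ back to a map $w':\Gamma\to ((A\otimes\mathcal{W})^{\gamma})^{\omega}$ with $u_g=[w_g']$ — possible because $F((A\otimes\mathcal{W})^{\gamma})$ is a quotient of a relative commutant in the ultrapower and $\Gamma$ is finite, so a finite set of relations can be lifted — and verify this $w'$ witnesses approximate representability.

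The main obstacle I expect is the third step: cohomological vanishing of the $\mathbb{T}$-valued (or central-unitary-valued) 2-cocycle obstruction inside $F((A\otimes\mathcal{W})^{\gamma})$, and in particular checking that $F((A\otimes\mathcal{W})^{\gamma})$ has enough symmetry — e.g.\ that it tensorially absorbs the relevant model, or that its center is trivial — for the correction to be carried out. This is where the hypotheses that $A$ is simple separable nuclear monotracial and $\mathcal{W}$ is the Razak--Jacelon algebra really enter, via the machinery of Proposition~\ref{pro:strict-comparison} and the structure of Kirchberg's relative central sequence algebras; one must ensure $\mathcal{M}((A\otimes\mathcal{W})^{\gamma})$ (or the corresponding $F$-algebra) is a sufficiently rich factor, so that unitaries implementing trivial automorphisms are connected to $1$ and cocycle perturbations are available. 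The remaining steps — the covariance bookkeeping and the final lifting — are routine given \cite[Lemma 3.10]{I1} and \cite[Proposition 2.1.3]{sut}, which handle essentially this mechanism in the von Neumann and UHF-stable settings respectively; the work here is to transport that argument into the $\mathcal{W}$-stable C$^*$-framework.
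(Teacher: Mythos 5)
There is a genuine gap at your third step, and it is the heart of the lemma. You correctly isolate the obstruction $[v_gv_hv_{gh}^*]$ to multiplicativity, but your plan for removing it rests on two claims that fail. First, this element has no reason to be central in $F((A\otimes\mathcal{W})^{\gamma})$: commuting with (the image of) $A\otimes\mathcal{W}$ is built into membership in the central sequence algebra and says nothing about commuting with the rest of $F((A\otimes\mathcal{W})^{\gamma})$, so the obstruction is a genuinely unitary-valued $2$-cocycle, not a $\mathbb{T}$-valued one. Second, even if it were scalar, the assertion that ``since $\Gamma$ is finite abelian, $H^2(\Gamma,\mathbb{T})$ obstructions can be killed'' is false: $H^2(\mathbb{Z}_n\times\mathbb{Z}_n,\mathbb{T})\cong\mathbb{Z}_n$, so there is no cohomological vanishing to appeal to. A proof organized around showing the cocycle is a coboundary in the cohomological sense cannot work in general.

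The mechanism actually used in the paper (and in the cited \cite[Lemma 3.10]{I1} and \cite[Proposition 2.1.3]{sut}) is different: one trivializes the unitary-valued cocycle by stabilizing with the regular representation of $\Gamma$, which works for \emph{every} such cocycle regardless of any cohomology class. Concretely, since $\mathcal{W}\cong\mathcal{W}\otimes M_{|\Gamma|^{\infty}}$ there is a unital copy of $M_{|\Gamma|}(\mathbb{C})$ in $F(A\otimes\mathcal{W})^{\gamma}$ with matrix units $E_{g,h}$ indexed by $\Gamma$; after passing to subsequences and averaging one arranges that the $E_{g,h}$ commute with all $v_k,v_k^*$ and lie in $((A\otimes\mathcal{W})^{\gamma})^{\omega}$. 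Setting $z_g=\sum_{h\in\Gamma}v_gv_hv_{gh}^*E_{h,gh}$ and $w_g=z_g^*v_g$, a direct computation (using Proposition \ref{pro:unitary} to treat the $v_g$ as honest unitaries against $A\otimes\mathcal{W}$) shows $z_g^*z_ga=z_gz_g^*a=a$, that $w_g$ still implements $\gamma_g$, and that $g\mapsto[w_g]$ is exactly multiplicative. Note also that your closing worry about factoriality of $\mathcal{M}$ or richness of $F$ is misplaced for this lemma: it is stated for an arbitrary separable $A$, and the only structural input needed from $\mathcal{W}$ is the $M_{|\Gamma|^{\infty}}$-absorption; simplicity, nuclearity and monotraciality play no role here.
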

\begin{proof}
Since $\mathcal{W}$ is isomorphic to $\mathcal{W}\otimes M_{|\Gamma|^{\infty}}$, there exists a 
unital homomorphism $\psi$ from $M_{|\Gamma|}(\mathbb{C})$ to 
$F(A\otimes\mathcal{W})^{\gamma}$. 
For any $g,h\in \Gamma$, let 
$E_{g,h}\in (A\otimes\mathcal{W})^{\omega}\cap (A\otimes\mathcal{W})^{\prime}$ be a 
representative of $\psi (e_{g,h})$ where $\{e_{g,h}\}_{g,h\in \Gamma}$ are the matrix units 
of $M_{|\Gamma|}(\mathbb{C})$. 
Taking suitable subsequences, we may assume that $E_{g,h}\in \{v_k, v_k^*\; |\; k\in\Gamma\}^{\prime}$ 
for any $g,h\in\Gamma$. Moreover, we may assume that 
$E_{g,h}\in ((A\otimes\mathcal{W})^{\gamma})^{\omega}$ for any 
$g,h\in \Gamma$ by replacing $E_{g,h}$ with $\frac{1}{|\Gamma|}\sum_{k\in \Gamma}
\gamma_g(E_{g,h})$. 
For any $g\in\Gamma$, let $z_g:=\sum_{h\in \Gamma}v_gv_hv_{gh}^*E_{h, gh}$.
Note that we have $v_ga=\gamma_g(a)v_g$ and 
$\gamma_{g^{-1}}(a)v_g^*=v_g^*a$ in $(A\otimes \mathcal{W})^{\omega}$ for any 
$a\in A\otimes\mathcal{W}$ and $g\in\Gamma$ by Proposition \ref{pro:unitary}. 
Hence we have 
$z_g\in  ((A\otimes\mathcal{W})^{\gamma})^{\omega}\cap
(A\otimes\mathcal{W})^{\prime}$ for any $g\in\Gamma$. 
Define a map $w$ from $\Gamma$ to 
$((A\otimes\mathcal{W})^{\gamma})^{\omega}$ by $w_g:= z_g^*v_g$ 
for any $g\in \Gamma$. 
Note that we have  
\begin{align*}
z_g^*z_ga
&= \sum_{h\in \Gamma}E_{gh,h}v_{gh}v_h^*v_g^*\sum_{k\in \Gamma}v_gv_kv_{gk}^*E_{k, gk}a 
= \sum_{h,k\in \Gamma}v_{gh}v_h^*v_g^*v_gv_kv_{gk}^*E_{gh, h}E_{k, gk}a \\
&= \sum_{h\in \Gamma}v_{gh}v_h^*v_g^*v_gv_hv_{gh}^*E_{gh, gh}a 
= \sum_{h\in \Gamma}v_{gh}v_h^*v_g^*v_g\gamma_{g^{-1}}(a)v_hv_{gh}^*E_{gh, gh} \\
&= \sum_{h\in \Gamma} v_{gh}v_h^*\gamma_{g^{-1}}(a)v_hv_{gh}^*E_{gh, gh} 
= \sum_{h\in \Gamma} v_{gh}v_h^*v_h\gamma_{h^{-1}g^{-1}}(a)v_{gh}^*E_{gh, gh} \\
&= \sum_{h\in \Gamma} v_{gh}\gamma_{h^{-1}g^{-1}}(a)v_{gh}^*E_{gh, gh} 
= \sum_{h\in \Gamma} v_{gh}v_{gh}^*aE_{gh, gh} = \sum_{h\in \Gamma} aE_{gh, gh}=a
\end{align*}
in $(A\otimes\mathcal{W})^{\omega}$ for any 
$a\in A\otimes\mathcal{W}$ and $g\in \Gamma$. 
Hence we have 
$$
w_gaw_g^*=z_g^*v_gav_g^*z_g=z_g^*\gamma_g(a)z_g
=z_g^*z_g\gamma_g(a)=\gamma_g(a)
$$
in $(A\otimes\mathcal{W})^{\omega}$ for any 
$a\in A\otimes\mathcal{W}$ and $g\in\Gamma$. 
We shall show the map $u$ from $\Gamma$ to $F((A\otimes\mathcal{W})^{\gamma})$ 
given by $u_g=[w_g]$ is a unitary representation. 
In a similar way as above, we see that  
$
z_gz_g^*a=a 
$ 
in $(A\otimes\mathcal{W})^{\omega}$ for any $a\in A\otimes\mathcal{W}$ and $g\in \Gamma$. 
Hence the image of $u$ is contained in the unitary group of $F((A\otimes\mathcal{W})^{\gamma})$. 
Note that we have 
\begin{align*}
v_gz_hv_g^*z_gz_{gh}^*a
&=v_gz_hv_g^*\sum_{k\in \Gamma}v_gv_kv_{gk}^*E_{k, gk}\sum_{k^{\prime}\in \Gamma}E_{ghk^{\prime},
k^{\prime}}
v_{ghk^{\prime}}v_{k^{\prime}}^*v_{gh}^*a \\
&= v_gz_hv_g^*\sum_{k,k^{\prime}\in \Gamma}v_gv_kv_{gk}^*v_{ghk^{\prime}}v_{k^{\prime}}^*v_{gh}^*
E_{k, gk}E_{ghk^{\prime},k^{\prime}}a \\
&= v_gz_h\sum_{k\in \Gamma}v_g^*v_gv_kv_{gk}^*v_{gk}v_{h^{-1}k}^*v_{gh}^*E_{k,h^{-1}k}a \\
&= v_gz_h\sum_{k\in \Gamma}v_kv_{h^{-1}k}^*v_{gh}^*E_{k,h^{-1}k}a \\
&= v_g\sum_{k, k^{\prime}\in \Gamma}v_hv_{k^{\prime}}v_{hk^{\prime}}^*
v_kv_{h^{-1}k}^*v_{gh}^*E_{k^{\prime}, hk^{\prime}}E_{k, h^{-1}k}a \\
&= v_g\sum_{k^{\prime}\in \Gamma}v_hv_{k^{\prime}}v_{hk^{\prime}}^*
v_{hk^{\prime}}v_{k^{\prime}}^*v_{gh}^*E_{k^{\prime}, k^{\prime}}a \\
&= v_{g}v_{h}v_{gh}^*\sum_{k^{\prime}\in \Gamma}E_{k^{\prime}, k^{\prime}}a
=v_{g}v_{h}v_{gh}^*a 
\end{align*}
in $(A\otimes\mathcal{W})^{\omega}$ for any $a\in A\otimes\mathcal{W}$ and $g, h\in \Gamma$.
This implies that 
$$
z_{gh}^*v_{gh}\gamma_{h^{-1}g^{-1}}(a) =z^*_gv_gz_{h}^*v_{h} \gamma_{h^{-1}g^{-1}}(a) 
\quad \text{in} \quad (A\otimes\mathcal{W})^{\omega}
$$
for any $a\in A\otimes\mathcal{W}$ and $g, h\in \Gamma$.
Consequently, we have $u_{gh}=u_{g}u_{h}$ for any $g,h\in \Gamma$. 
\end{proof}

We have the following lemma by \cite[Corollary 4.6]{Na5}. 

\begin{lem}\label{lem:corollary 4.6}
With notation as above, let  $p$ and $q$ be projections in 
$F(A\otimes\mathcal{W})^{\gamma}$ such that  
$0<\tau_{A\otimes\mathcal{W}, \omega}(p)\leq 1$.
Then $p$ and $q$ are  Murray-von Neumann equivalent if and only if 
$\tau_{A\otimes\mathcal{W}, \omega}(p)=\tau_{A\otimes\mathcal{W}, \omega}(q)$.  
\end{lem}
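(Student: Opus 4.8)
The plan is to reduce the statement to a known comparison result for $F(A\otimes\mathcal{W})^{\gamma}$ together with standard projection-comparison manipulations. One direction is trivial: a trace is an invariant of Murray--von Neumann equivalence, so if $p\sim q$ in $F(A\otimes\mathcal{W})^{\gamma}$ then $\tau_{A\otimes\mathcal{W},\omega}(p)=\tau_{A\otimes\mathcal{W},\omega}(q)$. (Here one should note that the equivalence is meant inside $F(A\otimes\mathcal{W})^{\gamma}$, so the implementing partial isometry lies in the fixed-point algebra and $\tau_{A\otimes\mathcal{W},\omega}$ is $\gamma$-invariant, hence tracial on it.) The content is the converse.

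For the converse I would invoke \cite[Corollary 4.6]{Na5}, which is the source cited in the statement; the role of the present lemma is to package that corollary in the form needed later. So the real work is to check that the hypotheses of that corollary are met. Concretely: $A$ is simple separable nuclear monotracial and non-type-I (it carries an outer action), $\mathcal{W}$ has strict comparison and a unique trace, $\gamma=\alpha\otimes\mathrm{id}_{\mathcal W}$ has $\gamma_g$ outer on $A\otimes\mathcal W$ for $g\neq\iota$ (since $\alpha$ is outer on $A$ and outerness passes to the tensor product with $\mathrm{id}$), and $\mathcal{M}(A\otimes\mathcal W)^{\tilde\gamma}$ is a factor — this last point uses that $\pi_{\tau}(A)^{''}\cong R$ is the injective II$_1$ factor with the McDuff property, so its central-sequence algebra under an outer finite-group action has factorial fixed-point algebra, exactly the setting of Proposition \ref{pro:strict-comparison}. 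Granting these, the strict-comparison statement for positive elements in $F(A\otimes\mathcal W)^{\gamma}$ applies, and in particular to projections $p,q$: if $\tau_{A\otimes\mathcal W,\omega}(p)=\tau_{A\otimes\mathcal W,\omega}(q)$ then $p\precsim q$ and $q\precsim p$.

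From two-sided subequivalence of projections one gets genuine Murray--von Neumann equivalence by a Cantor--Schr\"oder--Bernstein argument, provided the ambient algebra is sufficiently well-behaved; since $F(A\otimes\mathcal W)$ (and its fixed-point subalgebra) has the relevant stability/comparison properties coming from $\mathcal W$-absorption and the trace $\tau_{A\otimes\mathcal W,\omega}$ is faithful on projections when $\tau_{A\otimes\mathcal W,\omega}(p)>0$, the standard argument goes through; this is precisely what \cite[Corollary 4.6]{Na5} records, so I would simply cite it rather than reprove it. The only genuine obstacle is the bookkeeping in the previous paragraph — verifying that $\mathcal{M}(A\otimes\mathcal W)^{\tilde\gamma}$ is a factor and that all hypotheses of Proposition \ref{pro:strict-comparison} and of \cite[Corollary 4.6]{Na5} hold in the monotracial $\mathcal W$-stable setting with $\gamma$ of product type. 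Once that is in place, both implications are immediate and the proof is short.
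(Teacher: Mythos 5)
Your proposal matches the paper, which gives no argument at all for this lemma beyond the single sentence ``We have the following lemma by \cite[Corollary 4.6]{Na5}''; your route --- the easy direction from traciality plus a direct appeal to that corollary, with the hypothesis-checking deferred to the strict-comparison setup of Proposition \ref{pro:strict-comparison} --- is exactly the intended justification. The extra sketch of how the cited corollary itself would be proved is not needed (and the step from $d_\tau(a)<d_\tau(b)$ to two-sided subequivalence at equal traces would need more care if you were actually reproving it), but since you explicitly cite rather than reprove, the proposal is fine.
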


Fix $g_0\in \Gamma$. Define a homomorphism
$\Phi_{g_0}$ from $A\otimes\mathcal{W}$ to $M_2(A\otimes\mathcal{W})$ by 
$$
\Phi_{g_0}(a)= \left(\begin{array}{cc}
               a   &   0    \\ 
               0   &   \gamma_{g_0}(a)    
 \end{array} \right).
$$
Since $\Gamma$ is abelian, we have 
$\gamma_g
\otimes\mathrm{id}_{M_2(\mathbb{C})}(\Phi_{g_0}(A\otimes\mathcal{W}))=\Phi_{g_0}(A\otimes\mathcal{W})$ 
for any $g\in\Gamma$. 
Therefore $\gamma\otimes\mathrm{id}_{M_2(\mathbb{C})}$ 
induces an action on $F(\Phi_{g_0}(A\otimes\mathcal{W}), M_2(A\otimes\mathcal{W}))$. 
We denote it by $\beta$. Also, let $\tau_{\omega}$ denote the induced tracial state on 
$F(\Phi_{g_0}(A\otimes\mathcal{W}), M_2(A\otimes\mathcal{W}))$ by 
$\tau_{M_2(A\otimes\mathcal{W})}$. 

\begin{lem}\label{lem:strict-comparison}
With notation as above, assume that the characteristic invariant of $\tilde{\alpha}$ is trivial. 
If $a$ and $b$ 
are positive elements in 
$F(\Phi_{g_0}(A\otimes\mathcal{W}), M_2(A\otimes\mathcal{W}))^{\beta}$ satisfying 
$d_{\tau_{\omega}}(a)< d_{\tau_{\omega}}(b)$, 
then there exists an element $r$ in $F(\Phi_{g_0}(A\otimes\mathcal{W}), 
M_2(A\otimes\mathcal{W}))^{\beta}$ such that $r^*br=a$. 
\end{lem}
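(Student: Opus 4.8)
The plan is to deduce this from Proposition \ref{pro:strict-comparison}, taking for the triple $(A,B,\Phi)$ there the data $(A\otimes\mathcal{W},\ M_2(A\otimes\mathcal{W}),\ \Phi_{g_0})$ and for the action ``$\beta$'' on the larger algebra the action $\gamma\otimes\mathrm{id}_{M_2(\mathbb{C})}$, so that the action it induces on $F(\Phi_{g_0}(A\otimes\mathcal{W}),M_2(A\otimes\mathcal{W}))$ is exactly the $\beta$ of the present section. First I would dispose of the standing hypotheses of that proposition. Since $A$ is simple, separable, nuclear and monotracial, $A\otimes\mathcal{W}$ (hence also $M_2(A\otimes\mathcal{W})$) is simple, separable, nuclear, monotracial and $\mathcal{Z}$-stable; in particular $M_2(A\otimes\mathcal{W})$ has strict comparison and its unique trace is faithful. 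Moreover the unique trace of $A\otimes\mathcal{W}$ is automatically $\gamma_{g_0}$-invariant, so $\tau_{M_2(A\otimes\mathcal{W})}\circ\Phi_{g_0}=\tau_{A\otimes\mathcal{W}}$. It therefore remains to verify the two extra hypotheses of Proposition \ref{pro:strict-comparison}: that the restriction of $(\gamma\otimes\mathrm{id}_{M_2})_g$ to $\Phi_{g_0}(A\otimes\mathcal{W})$ is outer for every $g\in\Gamma\setminus\{\iota\}$, and that $\mathcal{M}(\Phi_{g_0}(A\otimes\mathcal{W}),M_2(A\otimes\mathcal{W}))^{\widetilde{\gamma\otimes\mathrm{id}_{M_2}}}$ is a factor.

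For the outerness hypothesis, note that $\Gamma$ is abelian, so $(\gamma\otimes\mathrm{id}_{M_2})_g\circ\Phi_{g_0}=\Phi_{g_0}\circ\gamma_g$, and hence the restriction of $(\gamma\otimes\mathrm{id}_{M_2})_g$ to $\Phi_{g_0}(A\otimes\mathcal{W})$ is conjugate, via the isomorphism $\Phi_{g_0}$, to $\gamma_g=\alpha_g\otimes\mathrm{id}_{\mathcal{W}}$ on $A\otimes\mathcal{W}$. It thus suffices to know that $\gamma_g$ is outer for $g\neq\iota$; and since the relative commutant of $1\otimes\mathcal{W}$ in $M(A\otimes\mathcal{W})$ is $M(A)\otimes 1$, an inner $\gamma_g$ would force $\alpha_g$ to be inner, contradicting the outerness of $\alpha$.

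The remaining point, factoriality of $\mathcal{M}(\Phi_{g_0}(A\otimes\mathcal{W}),M_2(A\otimes\mathcal{W}))^{\widetilde{\gamma\otimes\mathrm{id}_{M_2}}}$, is the heart of the matter, and this is where triviality of the characteristic invariant of $\tilde\alpha$ enters. Write $M:=\pi_{\tau_{A\otimes\mathcal{W}}}(A\otimes\mathcal{W})''\cong\pi_{\tau_A}(A)''\,\overline{\otimes}\,\pi_{\tau_{\mathcal{W}}}(\mathcal{W})''$, the injective II$_1$ factor, and let $M^{\omega}$ be its tracial ultrapower. Then $\pi_{\tau_{M_2(A\otimes\mathcal{W})}}(M_2(A\otimes\mathcal{W}))''=M_2(M)$ and $\mathcal{M}=M_2(M^{\omega})$, the weak closure of $\Phi_{g_0}(A\otimes\mathcal{W})$ is $M_{g_0}:=\{\mathrm{diag}(x,\tilde\gamma_{g_0}(x)) : x\in M\}\cong M$, and $\mathcal{M}(\Phi_{g_0}(A\otimes\mathcal{W}),M_2(A\otimes\mathcal{W}))=M_2(M^{\omega})\cap M_{g_0}'$, on which $\widetilde{\gamma\otimes\mathrm{id}_{M_2}}$ is the ultrapower extension of $\tilde\gamma\otimes\mathrm{id}_{M_2}$ with $\tilde\gamma=\tilde\alpha\otimes\mathrm{id}$. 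Since $N(\tilde\gamma)=N(\tilde\alpha)$ and the characteristic invariant of $\tilde\gamma$ coincides with that of $\tilde\alpha$, it is trivial, so I may fix a unitary representation $v$ of $N(\tilde\gamma)$ in $M$ with $\tilde\gamma_h=\mathrm{Ad}(v_h)$. The plan is then to compute the corners of $M_2(M^{\omega})\cap M_{g_0}'$ — the diagonal ones in terms of the asymptotic centraliser $M^{\omega}\cap M'$, the off-diagonal ones in terms of elements of $M^{\omega}$ intertwining $\mathrm{id}$ and $\tilde\gamma_{g_0}$ on $M$, which are under control since $M$ is McDuff and every automorphism of the injective II$_1$ factor is approximately inner — and then, after untwisting by $v$, to read off the $(\tilde\gamma\otimes\mathrm{id}_{M_2})$-fixed point algebra. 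Its factoriality should follow by combining McDuff's property of $M$ with the standard fact underlying Proposition \ref{pro:strict-comparison} itself (cf.\ \cite{Oc}, \cite{MS}) that for an outer action of an amenable group on the injective II$_1$ factor the fixed point algebra of the asymptotic centraliser is a factor, together with the bookkeeping of the inner part $N(\tilde\gamma)$ — which contributes no centre precisely because the characteristic invariant vanishes. Reconciling the $g_0$-twist with this inner part is the step I expect to be the main obstacle; once it is settled, Proposition \ref{pro:strict-comparison} applies and yields the required $r\in F(\Phi_{g_0}(A\otimes\mathcal{W}),M_2(A\otimes\mathcal{W}))^{\beta}$ with $r^*br=a$.
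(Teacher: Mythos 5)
Your reduction to Proposition \ref{pro:strict-comparison} is the right move and matches the paper: the standing hypotheses (strict comparison, faithfulness and invariance of the trace, compatibility with $\Phi_{g_0}$) and the outerness of $(\gamma\otimes\mathrm{id}_{M_2(\mathbb{C})})_g|_{\Phi_{g_0}(A\otimes\mathcal{W})}$ are indeed routine. But the one substantive hypothesis, factoriality of $\mathcal{M}(\Phi_{g_0}(A\otimes\mathcal{W}), M_2(A\otimes\mathcal{W}))^{\tilde{\beta}}$, is exactly the content of the lemma, and your proposal does not prove it: you outline a corner-by-corner computation of $M_2(M^{\omega})\cap M_{g_0}'$ and then yourself flag ``reconciling the $g_0$-twist with the inner part'' as an unresolved obstacle. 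That is a genuine gap, not a detail, because without it nothing distinguishes the case of trivial characteristic invariant from the general one, and the statement is false in general (by Theorem \ref{thm:main} combined with Proposition \ref{pro:non-trivial-characteristic}, approximate representability fails when the invariant is nontrivial, so the comparison argument must break somewhere --- it breaks here).

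The paper's resolution of precisely this point is short but uses the hypothesis in an essential way. Triviality of the characteristic invariant lets one choose the unitary representation $v$ of $N(\tilde{\alpha})$ implementing $\tilde{\alpha}_h=\mathrm{Ad}(v_h)$ to take values in the \emph{fixed-point algebra} $(\pi_{\tau_A}(A)'')^{\tilde{\alpha}}$ (since $\tilde{\alpha}_g(v_h)=\lambda_{\tilde\alpha}(g,h)v_h=v_h$). Because $v_h\otimes 1$ is then fixed by $\tilde{\gamma}_{g_0}$, the diagonal unitary $\mathrm{diag}(v_h\otimes 1,\, v_h\otimes 1)=\mathrm{diag}(x,\tilde{\gamma}_{g_0}(x))$ with $x=v_h\otimes 1$ lies in the weak closure of $\Phi_{g_0}(A\otimes\mathcal{W})$, so $\tilde{\beta}_h$ acts \emph{trivially} on the relative commutant $\mathcal{M}(\Phi_{g_0}(A\otimes\mathcal{W}), M_2(A\otimes\mathcal{W}))$ for every $h\in N(\tilde{\alpha})$. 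This collapses the action to one of the quotient $\Gamma/N(\tilde{\alpha})$, whose restriction to $\mathcal{M}(M_2(A\otimes\mathcal{W}))$ is strongly free by Connes and Ocneanu, and factoriality of the fixed-point algebra then follows from \cite[Proposition 3.14]{Na5}. Your plan of untwisting by $v$ and analysing the corners could likely be pushed through, but only after making this same observation that $v$ can be chosen $\tilde{\alpha}$-invariant; as written, the proposal stops short of the step where the hypothesis on the characteristic invariant actually does its work.
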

\begin{proof}
By Proposition \ref{pro:strict-comparison}, it suffices to show that 
$\mathcal{M}(\Phi_{g_0}(A\otimes\mathcal{W}), M_2(A\otimes\mathcal{W}))^{\tilde{\beta}}$ is 
a factor. Since the characteristic invariant of $\tilde{\alpha}$ is trivial, there exists a group 
homomorphism $v$ from $N(\tilde{\alpha})$ to the unitary group of 
$(\pi_{\tau_{A}}(A)^{''})^{\tilde{\alpha}}$ such that $\tilde{\alpha}_g=\mathrm{Ad}(v_g)$ for any 
$g\in N(\tilde{\alpha})$. Since we have 
$$
\tilde{\gamma}_g\otimes\mathrm{id}_{M_2(\mathbb{C})}= \mathrm{Ad}\left( \left(\begin{array}{cc}
              v_g\otimes  1_{\pi_{\tau_{\mathcal{W}}}(\mathcal{W})^{''}}  &   0    \\ 
               0    &      v_g\otimes 1_{\pi_{\tau_{\mathcal{W}}}(\mathcal{W})^{''}} 
 \end{array} \right)\right)
$$
and $\tilde{\beta}_g([(x_n)_n])=[(\tilde{\gamma}_g\otimes\mathrm{id}_{M_2(\mathbb{C})}(x_n))_n]$, 
$$
\tilde{\beta_g}([(x_n)_n])= \left[\left(\mathrm{Ad}\left( \left(\begin{array}{cc}
              v_g\otimes  1_{\pi_{\tau_{\mathcal{W}}}(\mathcal{W})^{''}}  &   0    \\ 
               0    &      v_g\otimes 1_{\pi_{\tau_{\mathcal{W}}}(\mathcal{W})^{''}} 
 \end{array} \right)\right)(x_n)\right)_n\right]
$$
for any for any $g\in N_{\tilde{\alpha}}$ and $[(x_n)_n]$ in 
$\mathcal{M}(\Phi_{g_0}(A\otimes\mathcal{W}), M_2(A\otimes\mathcal{W}))^{\tilde{\beta}}$. 
Note that  
$$
\left(\begin{array}{cc}
              v_g\otimes  1_{\pi_{\tau_{\mathcal{W}}}(\mathcal{W})^{''}}  &   0    \\ 
               0    &      v_g\otimes 1_{\pi_{\tau_{\mathcal{W}}}(\mathcal{W})^{''}} 
 \end{array} \right)
\in 
\pi_{\tau_{\omega}}(\Phi_{g_0}(A\otimes\mathcal{W}))^{''}
$$
because we have $v_g\otimes  1_{\pi_{\tau_{\mathcal{W}}}(\mathcal{W})^{''}}\in 
\pi_{\tau_{A\otimes\mathcal{W}}}(A\otimes\mathcal{W})^{''}$ and 
$$
\left(\begin{array}{cc}
              v_g\otimes  1_{\pi_{\tau_{\mathcal{W}}}(\mathcal{W})^{''}}  &   0    \\ 
               0    &      v_g\otimes 1_{\pi_{\tau_{\mathcal{W}}}(\mathcal{W})^{''}} 
 \end{array} \right)
=\left(\begin{array}{cc}
              v_g\otimes  1_{\pi_{\tau_{\mathcal{W}}}(\mathcal{W})^{''}}  &   0    \\ 
               0    &      \tilde{\gamma}_{g_0}(v_g\otimes 1_{\pi_{\tau_{\mathcal{W}}}(\mathcal{W})^{''}}) 
 \end{array} \right).
$$
Hence $\tilde{\beta}_g$ is the trivial automorphism for any $g\in N(\tilde{\alpha})$. 
Therefore $\gamma\otimes\mathrm{id}_{M_2(\mathbb{C})}$ induces 
an action $\delta$ of $\Gamma /N(\tilde{\alpha})$ on 
$\mathcal{M}(\Phi_{g_0}(A\otimes\mathcal{W}), M_2(A\otimes\mathcal{W}))$ such that 
$$
\mathcal{M}(\Phi_{g_0}(A\otimes\mathcal{W}), M_2(A\otimes\mathcal{W}))^{\delta}
=
\mathcal{M}(\Phi_{g_0}(A\otimes\mathcal{W}), M_2(A\otimes\mathcal{W}))^{\tilde{\beta}}.
$$
Note that the restriction of $\delta$ on $\mathcal{M}(M_2(A\otimes\mathcal{W}))$ is strongly free or 
$\Gamma /N(\tilde{\alpha})=\{\iota\}$ 
by \cite[Theorem 3.2]{C3}, \cite[Lemma 5.6]{Oc} and the definition of $N(\tilde{\alpha})$. 
(Note that every centrally nontrivial automorphism of a factor is properly centrally nontrivial by 
definition. See \cite[Section 5.2]{Oc}.) 
Therefore \cite[Proposition 3.14]{Na5}(see also \cite[Remark 3.15]{Na5}) 
implies that 
$
\mathcal{M}(\Phi_{g_0}(A\otimes\mathcal{W}), M_2(A\otimes\mathcal{W}))^{\delta}
$
is a factor. Consequently, $\mathcal{M}(\Phi_{g_0}(A\otimes\mathcal{W}),
M_2(A\otimes\mathcal{W}))^{\tilde{\beta}}$ is 
a factor.
\end{proof}

The following theorem is one of the main results in this paper. 

\begin{thm}\label{thm:main}
Let $A$ be a simple separable nuclear monotracial C$^*$-algebra, and let $\alpha$ be 
an outer action of a finite abelian group $\Gamma$ on $A$. 
Then $\gamma=\alpha\otimes\mathrm{id}_{\mathcal{W}}$ on $A\otimes\mathcal{W}$ is approximately 
representable if and only if the characteristic invariant of $\tilde{\alpha}$ is trivial. 
\end{thm}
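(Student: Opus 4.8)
The plan is to prove the two implications separately, using the structural results already established. For the ``only if'' direction, suppose $\gamma = \alpha \otimes \mathrm{id}_{\mathcal W}$ is approximately representable. Then the dual action $\hat\gamma$ on $(A\otimes\mathcal W)\rtimes_\gamma\Gamma$ has the Rohlin property (approximate representability and the Rohlin property are dual under $\Gamma \leftrightarrow \hat\Gamma$), so by Proposition \ref{pro:rohlin-outer} applied to $\hat\gamma$ and the (unique up to the simplex structure) $\hat\gamma$-invariant trace, the induced action $\widetilde{\hat\gamma}$ on $\pi(\,\cdot\,)^{''}$ is outer. On the other hand, $(A\otimes\mathcal W)\rtimes_\gamma\Gamma \cong (A\rtimes_\alpha\Gamma)\otimes\mathcal W$ and the corresponding von Neumann completion is $\pi_{\tau_A}(A)^{''}\rtimes_{\tilde\alpha}\Gamma$, so $\widetilde{\hat\gamma}$ is exactly the dual action $\widehat{\tilde\alpha}$. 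By the contrapositive of Proposition \ref{pro:non-trivial-characteristic}, outerness of $\widehat{\tilde\alpha}$ forces the characteristic invariant of $\tilde\alpha$ to be trivial. The routine check here is identifying the traces and the induced actions compatibly; I would phrase this via the isomorphism $\pi_{\tau_A\circ E_\alpha}((A\rtimes_\alpha\Gamma))^{''}\cong \pi_{\tau_A}(A)^{''}\rtimes_{\tilde\alpha}\Gamma$ recalled in the preliminaries and Proposition \ref{pro:trace-spaces-crossed-products}.

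For the ``if'' direction, assume the characteristic invariant of $\tilde\alpha$ is trivial; by Lemma \ref{lem:sutherland} it suffices to produce, for each $g\in\Gamma$, an element $v_g\in((A\otimes\mathcal W)^\gamma)^\omega$ with $\gamma_g(a)=v_gav_g^*$ for all $a\in A\otimes\mathcal W$ and with $[v_g]$ a unitary in $F((A\otimes\mathcal W)^\gamma)$. The natural strategy is to realize $v_g$ as (the class of) a partial isometry in $F(\Phi_g(A\otimes\mathcal W), M_2(A\otimes\mathcal W))$ implementing a Murray--von Neumann equivalence between the two corners $e_{11}$ and $e_{22}$ of $M_2$, where $\Phi_g$ is the homomorphism defined just before Lemma \ref{lem:strict-comparison}. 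Indeed, a partial isometry $V$ with $V^*V = e_{11}$, $VV^* = e_{22}$ in $F(\Phi_g(\cdot), M_2(\cdot))$ commutes with $\Phi_g(A\otimes\mathcal W)$, and conjugation by $V$ intertwines $a\mapsto a$ with $a\mapsto\gamma_g(a)$; compressing back to the first corner yields the desired $v_g$ in the relative central sequence algebra of $A\otimes\mathcal W$ inside its fixed-point algebra. The crucial input is that such $V$ exists and can be taken $\beta$-invariant (i.e.\ in the fixed-point algebra $F(\Phi_g(\cdot),M_2(\cdot))^\beta$), which is exactly where Lemma \ref{lem:strict-comparison} enters: $e_{11}$ and $e_{22}$ are $\beta$-fixed projections with $d_{\tau_\omega}(e_{11}) = d_{\tau_\omega}(e_{22}) = \tfrac12$, and strict comparison in the fixed-point algebra together with the equivalence criterion of Lemma \ref{lem:corollary 4.6} (applied in the fixed-point picture) gives a $\beta$-invariant $V$.

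I would therefore organize the argument as: (1) fix $g_0 = g$, form $\Phi_g$ and $\beta$; (2) verify $e_{11}, e_{22}$ are projections in $F(\Phi_g(A\otimes\mathcal W),M_2(A\otimes\mathcal W))^\beta$ with equal $\tau_\omega$-values $\tfrac12$; (3) invoke Lemma \ref{lem:strict-comparison} (via Lemma \ref{lem:corollary 4.6} for projections) to get a $\beta$-invariant partial isometry $V$ with $V^*V=e_{11}$, $VV^*=e_{22}$ — this is the step I expect to require the most care, since I must check that the equivalence can be implemented inside the $\beta$-fixed subalgebra and not merely in the ambient relative central sequence algebra; (4) lift $V$ to a representing sequence, compress to the $(1,1)$-corner to obtain $v_g\in((A\otimes\mathcal W)^\gamma)^\omega$ and verify $\gamma_g(a) = v_g a v_g^*$ and that $[v_g]$ is unitary in $F((A\otimes\mathcal W)^\gamma)$; (5) apply Lemma \ref{lem:sutherland} to conclude approximate representability. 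The main obstacle is step (3): one needs the fixed-point algebra $F(\Phi_g(A\otimes\mathcal W),M_2(A\otimes\mathcal W))^\beta$ itself to have enough structure — strict comparison and the matching of traces with Murray--von Neumann equivalence — which Lemmas \ref{lem:strict-comparison} and \ref{lem:corollary 4.6} are precisely designed to supply, so the work is in assembling them correctly rather than in any genuinely new estimate.
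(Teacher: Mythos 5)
Your proposal is correct and follows essentially the same route as the paper: the ``only if'' direction is the contrapositive of the paper's argument (duality between approximate representability and the Rohlin property, Proposition \ref{pro:rohlin-outer}, and Proposition \ref{pro:non-trivial-characteristic}), and the ``if'' direction uses the same $2\times 2$ matrix trick with $\Phi_{g_0}$, Murray--von Neumann equivalence of the two corner projections inside $F(\Phi_{g_0}(A\otimes\mathcal{W}), M_2(A\otimes\mathcal{W}))^{\beta}$ via Lemmas \ref{lem:corollary 4.6} and \ref{lem:strict-comparison}, extraction of the off-diagonal entry $v_{g_0}$, and Lemma \ref{lem:sutherland}. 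The step you flag as delicate (implementing the equivalence $\beta$-invariantly) is exactly the point the paper handles by adapting the argument of \cite[Lemma 6.2]{Na3} with these lemmas substituted.
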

\begin{proof}
First, we shall show the only if part. Assume that the characteristic invariant of $\tilde{\alpha}$ is 
not trivial. By Proposition \ref{pro:non-trivial-characteristic}, the dual action of 
$\tilde{\alpha}$ on $\pi_{\tau_{A}}(A)^{''}\rtimes_{\tilde{\alpha}}\Gamma$ 
is not outer, and 
hence the dual action of $\tilde{\gamma}$ on 
$\pi_{\tau_{A}\otimes\tau_{\mathcal{W}}}(A\otimes\mathcal{W})^{''}\rtimes_{\tilde{\gamma}}\Gamma
\cong\pi_{\tau_A\otimes\tau_{\mathcal{W}}\circ E_{\gamma}}((A\otimes\mathcal{W})\rtimes_{\gamma}\Gamma)^{''}$ 
is not outer. 
Proposition \ref{pro:rohlin-outer} implies that $\hat{\gamma}$ does not have the Rohlin property. 
Therefore $\gamma$ is not approximately representable by \cite[Proposition 4.4]{Na0}. 

We shall show the if part. 
Fix $g_0\in \Gamma$. Let $\{h_n\}_{n\in\mathbb{N}}$ be an approximate unit 
for $(A\otimes\mathcal{W})^{\gamma}$. Note that $\{h_n\}_{n\in\mathbb{N}}$ is also an approximate 
unit for $A\otimes\mathcal{W}$. 
Put 
$$
P:=\left[\left(\left(\begin{array}{cc}
              h_n  &   0    \\ 
               0    &   0 
 \end{array} \right)\right)_n\right]
\quad \text{and} 
\quad
Q:=\left[\left(\left(\begin{array}{cc}
               0  &   0    \\ 
               0  &   h_n 
 \end{array} \right)\right)_n\right]
$$
in 
$F(\Phi_{g_0}(A\otimes\mathcal{W}), M_2(A\otimes\mathcal{W}))$. 
Then $P$ and $Q$ are projections in 
$F(\Phi_{g_0}(A\otimes\mathcal{W}), M_2(A\otimes\mathcal{W}))^{\beta}$. 
Using \cite[Proposition 4.2]{Na5}, Lemma \ref{lem:corollary 4.6} and Lemma \ref{lem:strict-comparison}
instead of \cite[Proposition 2.6]{Na3}, \cite[Corollary 5.5]{Na3} and \cite[Lemma 6.1]{Na3}, 
similar arguments as in the proof of \cite[Lemma 6.2]{Na3} show that $P$ is Murray-von Neumann 
equivalent to $Q$ in $F(\Phi_{g_0}(A\otimes\mathcal{W}), M_2(A\otimes\mathcal{W}))^{\beta}$. 
(See also the proof of \cite[Lemma 4.2]{Na4}.)
Hence there exists an element $V$ in 
$F(\Phi_{g_0}(A\otimes\mathcal{W}), M_2(A\otimes\mathcal{W}))^{\beta}$ such that $V^*V=P$ and 
$VV^*=Q$. 
It is easy to see that there exists an element $v_{g_0}=(v_{g_0, n})_n$ 
in $(A\otimes\mathcal{W})^{\omega}$ 
such that 
$$
V=\left[\left(\left(\begin{array}{cc}
               0  &   0    \\ 
             v_{g_0,n}  &   0 
 \end{array} \right)\right)_n\right].
$$ 
Since we have $\beta_g(V)=V$ for any $g\in\Gamma$, we see that 
$$
\left[\left(\left(\begin{array}{cc}
               0  &   0    \\ 
             v_{g_0,n}  &   0 
 \end{array} \right)\right)_n\right]
=
\left[\left(\left(\begin{array}{cc}
               0  &   0    \\ 
             \frac{1}{|\Gamma|}\sum_{g\in\Gamma}\gamma_g(v_{g_0,n})  &   0 
 \end{array} \right)\right)_n\right].
$$
Hence we may assume that $v_{g_0}$ is an element in $((A\otimes\mathcal{W})^{\gamma})^{\omega}$. 
Since we have $V^*V=P$ and $VV^*=Q$, 
$$
av_{g_0}^*v_{g_0}=av_{g_0}v_{g_0}^*=a
$$ 
for any $a\in A\otimes\mathcal{W}$. 
Furthermore, 
we have 
$$
v_{g_0}a=\gamma_{g_0}(a)v_{g_0}
$$ 
for any $a\in A\otimes\mathcal{W}$ since 
$$
\left(\left(\begin{array}{cc}
               0      &   0    \\ 
          v_{g_0,n}  &   0 
 \end{array} \right)\right)_n
\in M_2(A\otimes\mathcal{W})^{\omega}\cap \Phi_{g_0}(A\otimes\mathcal{W})^{\prime}.
$$ 
These imply that 
$$
\gamma_{g_0}(a)=v_{g_0}av_{g_0}^*
$$
for any $a\in A\otimes\mathcal{W}$ 
and  
$[v_{g_0}]$ is a unitary element in $F((A\otimes\mathcal{W})^{\gamma})$. 
Since $g_0\in\Gamma$ is arbitrary, $\gamma$ is approximately representable 
by Lemma \ref{lem:sutherland}.
\end{proof}

Since $(A\otimes\mathcal{W})\rtimes_{\alpha\otimes\mathrm{id}_{\mathcal{W}}}\Gamma$ 
is isomorphic to $(A\rtimes_{\alpha}\Gamma)\otimes \mathcal{W}$, we see that 
$(A\otimes\mathcal{W})\rtimes_{\alpha\otimes\mathrm{id}_{\mathcal{W}}}\Gamma$ is in 
the class of Elliott-Gong-Lin-Niu's classification theorem \cite[Theorem 7.5]{EGLN} 
(see also \cite[Theorem A]{CE}). 
Furthermore, we see that 
$(A\otimes\mathcal{W})\rtimes_{\alpha\otimes\mathrm{id}_{\mathcal{W}}}\Gamma$ 
is in the class of Robert's classification theorem \cite{Rob}. 
In particular, these C$^*$-algebras and automorphisms can 
be classified by using trace spaces. Note that the map
from $T_1(A\rtimes_{\alpha}\Gamma)$ to $T_1( (A\rtimes_{\alpha}\Gamma)\otimes \mathcal{W})$ 
given by $\tau \mapsto \tau\otimes \tau_{\mathcal{W}}$ is an affine homeomorphism. 
As an application of the theorem above and these classification theorem, 
we obtain the following classification result. 

\begin{thm}
Let $A$ and $B$ be 
simple separable nuclear monotracial C$^*$-algebras, and let $\alpha$ and $\beta$ be 
outer actions of a finite abelian group $\Gamma$ on $A$ and $B$, respectively. 
Assume that the characteristic invariants of $\tilde{\alpha}$ and $\tilde{\beta}$ are trivial. 
Then \ \\
(i) $\alpha\otimes \mathrm{id}_{\mathcal{W}}$ on $A\otimes\mathcal{W}$ and 
$\beta\otimes \mathrm{id}_{\mathcal{W}}$ on $B\otimes\mathcal{W}$ are cocycle conjugate 
if and only if 
$\tilde{\alpha}$ on $\pi_{\tau_A}(A)^{''}$ and $\tilde{\beta}$ on $\pi_{\tau_B}(B)^{''}$ are 
cocycle conjugate; \ \\
(ii) $\alpha\otimes \mathrm{id}_{\mathcal{W}}$ on $A\otimes\mathcal{W}$ and 
$\beta\otimes \mathrm{id}_{\mathcal{W}}$ on $B\otimes\mathcal{W}$ are conjugate if and only if 
$\tilde{\alpha}$ on $\pi_{\tau_A}(A)^{''}$ and $\tilde{\beta}$ on $\pi_{\tau_B}(B)^{''}$ are conjugate.
\end{thm}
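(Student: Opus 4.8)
Throughout write $\gamma:=\alpha\otimes\mathrm{id}_{\mathcal{W}}$ and $\gamma':=\beta\otimes\mathrm{id}_{\mathcal{W}}$; by Theorem~\ref{thm:main} both are approximately representable, since the characteristic invariants of $\tilde\alpha$ and $\tilde\beta$ are trivial. The plan is to transfer the problem to the von Neumann level, where Jones' classification (Theorem~\ref{thm:jones}) applies. As $A$ and $B$ are nuclear, $\pi_{\tau_A}(A)''$ and $\pi_{\tau_B}(B)''$ are the injective II$_1$ factor, and so is $\pi_{\tau_A\otimes\tau_{\mathcal{W}}}(A\otimes\mathcal{W})''\cong\pi_{\tau_A}(A)''\otimes\pi_{\tau_{\mathcal{W}}}(\mathcal{W})''$, on which the action $\tilde\gamma$ induced by $\gamma$ is $\tilde\alpha\otimes\mathrm{id}$. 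A routine check --- restricting an implementing unitary to the first tensor factor for $N(\cdot)$, and the trace computation defining $m_v$ for $i(\cdot)$ --- gives $N(\tilde\gamma)=N(\tilde\alpha)$ and $i(\tilde\gamma)=i(\tilde\alpha)$, and similarly $N(\tilde{\gamma'})=N(\tilde\beta)$, $i(\tilde{\gamma'})=i(\tilde\beta)$; in particular the characteristic invariants of $\tilde\gamma$ and $\tilde{\gamma'}$ are again trivial.

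The ``only if'' direction is then immediate: if $\gamma$ and $\gamma'$ are cocycle conjugate (resp.\ conjugate), then, $A\otimes\mathcal{W}$ and $B\otimes\mathcal{W}$ being monotracial, the implementing isomorphism is automatically trace preserving and thus extends to a cocycle conjugacy (resp.\ conjugacy) of $\tilde\gamma$ with $\tilde{\gamma'}$. Since $N(\cdot)$ is a cocycle conjugacy invariant and $i(\cdot)$ a conjugacy invariant for actions on a II$_1$ factor, the first paragraph yields $N(\tilde\alpha)=N(\tilde\beta)$ (resp.\ and $i(\tilde\alpha)=i(\tilde\beta)$), whence $\tilde\alpha$ and $\tilde\beta$ are cocycle conjugate (resp.\ conjugate) by Theorem~\ref{thm:jones}.

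For the ``if'' direction I would use the crossed product and its dual action as a bridge. For a finite abelian $\Gamma$, any isomorphism $\Theta$ of $(A\otimes\mathcal{W})\rtimes_{\gamma}\Gamma$ onto $(B\otimes\mathcal{W})\rtimes_{\gamma'}\Gamma$ intertwining the dual actions restricts to the fixed point algebras of those dual actions --- namely $A\otimes\mathcal{W}$ and $B\otimes\mathcal{W}$ --- and thereby exhibits $\gamma$ and $\gamma'$ as cocycle conjugate, and as conjugate if in addition $\Theta(e_{\gamma})=e_{\gamma'}$; this is a standard spectral subspace computation. So it suffices to construct such a $\Theta$. Assuming $\tilde\alpha$ and $\tilde\beta$ are cocycle conjugate (resp.\ conjugate), the induced isomorphism $\pi_{\tau_A}(A)''\rtimes_{\tilde\alpha}\Gamma\cong\pi_{\tau_B}(B)''\rtimes_{\tilde\beta}\Gamma$ intertwines the dual actions (resp.\ and carries $e_{\tilde\alpha}$ to $e_{\tilde\beta}$), and, passing through Proposition~\ref{pro:trace-spaces-crossed-products}, the identification $\pi_{\tau_A\circ E_{\alpha}}(A\rtimes_\alpha\Gamma)''\cong\pi_{\tau_A}(A)''\rtimes_{\tilde\alpha}\Gamma$, the map $\tau\mapsto\tau\otimes\tau_{\mathcal{W}}$, and $(A\otimes\mathcal{W})\rtimes_{\gamma}\Gamma\cong(A\rtimes_{\alpha}\Gamma)\otimes\mathcal{W}$, it yields an affine homeomorphism between the tracial state spaces of $(A\otimes\mathcal{W})\rtimes_{\gamma}\Gamma$ and $(B\otimes\mathcal{W})\rtimes_{\gamma'}\Gamma$ intertwining $T(\hat\gamma_\eta)$ with $T(\hat{\gamma'}_\eta)$ for all $\eta\in\hat\Gamma$ (resp.\ and preserving the values at the $e$'s). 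Since these crossed products are isomorphic to $(A\rtimes_\alpha\Gamma)\otimes\mathcal{W}$ and $(B\rtimes_\beta\Gamma)\otimes\mathcal{W}$, they lie in the class of \cite{CE}, \cite{EGLN} and \cite{Rob}, in which algebras and their automorphisms are classified by trace data; so one obtains an isomorphism $\Theta_0$ with $\Theta_0\circ\hat\gamma_\eta\circ\Theta_0^{-1}$ approximately unitarily equivalent to $\hat{\gamma'}_\eta$ for every $\eta\in\hat\Gamma$, and (for the conjugacy statement) with $\Theta_0(e_{\gamma})$ Murray--von Neumann equivalent to $e_{\gamma'}$, as these projections have equal traces and strict comparison holds. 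Finally, since $\gamma$ and $\gamma'$ are approximately representable, $\hat\gamma$ and $\hat{\gamma'}$ have the Rohlin property (\cite{Na0}), and one promotes $\Theta_0$ to an honest $\Theta$ intertwining the dual actions --- and, for conjugacy, with $\Theta(e_\gamma)=e_{\gamma'}$ --- by an Evans--Kishimoto-type one-sided intertwining in the spirit of \cite{Na5}, absorbing the unitary discrepancies using the Rohlin property of the dual action and the relative central sequence estimates of Section~\ref{sec:app} (Lemmas~\ref{lem:sutherland}, \ref{lem:corollary 4.6} and~\ref{lem:strict-comparison}).

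The hard part is this last step. Because $\gamma$ and $\gamma'$ are in general not strongly outer --- $\tilde\alpha$ is implemented by unitaries on $N(\tilde\alpha)$ --- the intertwining argument of \cite{Na5} does not apply verbatim: one must isolate the ``von Neumann part'' carried by $N(\tilde\alpha)$, which is already matched by Jones' theorem, from the complementary ``$\mathcal{W}$-part'', and make the successive perturbations of $\Theta_0$ compatible simultaneously over all of $\hat\Gamma$. For the conjugacy statement one must, in addition, carry the projection $e$ correctly through the whole intertwining, and it is precisely there that $i(\tilde\alpha)$ is the decisive extra invariant.
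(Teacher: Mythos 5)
Your outline coincides with the paper's proof up to the last step: transfer to trace spaces via Proposition~\ref{pro:conjugacy-trace-spaces} and Proposition~\ref{pro:trace-spaces-crossed-products}, realize the resulting affine homeomorphism by an isomorphism $\Theta_0$ of the crossed products using the Elliott--Gong--Lin--Niu and Robert classification theorems, so that $\Theta_0\circ\hat\gamma_\eta\circ\Theta_0^{-1}$ is approximately unitarily equivalent to $\hat{\gamma'}_\eta$ for all $\eta$, and then upgrade. But the step you flag as ``the hard part'' and leave open is, in the paper, not new work at all: it is exactly the content of the already-available uniqueness theorem for Rohlin actions (\cite[Theorem~3.5]{Na0}, an Evans--Kishimoto intertwining in which the Rohlin property of the acting group absorbs all cocycle discrepancies), applied to the \emph{dual} actions $\hat\gamma$ and $\hat{\gamma'}$, which have the Rohlin property precisely because $\gamma$ and $\gamma'$ are approximately representable (Theorem~\ref{thm:main} plus \cite[Proposition~4.4]{Na0}). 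This yields genuine conjugacy of $\hat\gamma$ and $\hat{\gamma'}$, and the descent to cocycle conjugacy (resp.\ conjugacy, using the trace condition on $e_\gamma$) of $\gamma$ and $\gamma'$ is again a quoted duality result, \cite[Proposition~5.4]{Na0} (resp.\ \cite[Corollary~4.5]{Na0}).

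Your closing worry --- that the intertwining of \cite{Na5} fails because $\gamma$ is not strongly outer, so one must ``isolate the von Neumann part carried by $N(\tilde\alpha)$'' --- is a red herring. No intertwining is ever performed on $\gamma$ itself; it is performed on $\hat\gamma$, and there the only hypothesis needed is the Rohlin property, which holds irrespective of any outerness properties of $\gamma$. The non-strong-outerness of $\gamma$ is accounted for entirely upstream, in the invariants $N(\tilde\alpha)$ and $i(\tilde\alpha)$ that enter through the trace-space data (the extremal traces of $A\rtimes_\alpha\Gamma$ and the values $\tau(e_\alpha)$), not in the intertwining. So the gap in your proposal is not that the approach is wrong but that the decisive lemma closing it already exists and you have replaced it with a harder-looking and partly misdirected programme; as written, the proof is incomplete at its crucial point.
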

\begin{proof}
(i) First, we shall show the only if part. Since $\alpha\otimes \mathrm{id}_{\mathcal{W}}$ and 
$\beta\otimes \mathrm{id}_{\mathcal{W}}$ are cocycle conjugate, 
$\tilde{\alpha}\otimes\mathrm{id}_{\pi_{\tau_{\mathcal{W}}}(\mathcal{W})^{''}}$ and 
$\tilde{\beta}\otimes\mathrm{id}_{\pi_{\tau_{\mathcal{W}}}(\mathcal{W})^{''}}$ are cocycle conjugate. 
Since $\tilde{\alpha}\otimes\mathrm{id}_{\pi_{\tau_{\mathcal{W}}}(\mathcal{W})^{''}}$ is conjugate to 
$\tilde{\alpha}$ by \cite[Corollary 5.2.3]{Jones}, 
we see that $\tilde{\alpha}$ and $\tilde{\beta}$ are cocycle conjugate. 

We shall show the if part. 
Since $\tilde{\alpha}$ and $\tilde{\beta}$ are cocycle conjugate, 
 there exists an affine homeomorphism $F$
from $T_1(\pi_{\tau_B}(B)^{''}\rtimes_{\tilde{\beta}}\Gamma)$ onto 
$T_1(\pi_{\tau_A}(A)^{''}\rtimes_{\tilde{\alpha}}\Gamma)$ such that 
$F\circ T(\hat{\tilde{\beta}}_{\eta})=T(\hat{\tilde{\alpha}}_{\eta})\circ F$ for any 
$\eta\in \hat{\Gamma}$ by Proposition \ref{pro:conjugacy-trace-spaces}. 
Proposition \ref{pro:trace-spaces-crossed-products} implies that the restriction map 
$F|_{T_1(B\rtimes_{\beta}\Gamma)}$ is an affine homeomorphism from 
$T_1(B\rtimes_{\beta}\Gamma)$ onto $T_1(A\rtimes_{\alpha}\Gamma)$. 
Define a map $G$ from 
$T_1((B\otimes\mathcal{W})\rtimes_{\beta\otimes\mathrm{id}_{\mathcal{W}}}\Gamma)$ to 
$T_1((A\otimes\mathcal{W})\rtimes_{\alpha\otimes\mathrm{id}_{\mathcal{W}}}\Gamma)$ 
by $G(\tau\otimes\tau_{\mathcal{W}})= F(\tau)\otimes \tau_{\mathcal{W}}$ for any 
$\tau\in T_1(B\rtimes_{\beta}\Gamma)$. Then $G$ is an affine homeomorphism 
such that $G\circ T(\hat{\beta}_\eta\otimes\mathrm{id}_{\mathcal{W}})
= T(\hat{\alpha}_{\eta}\otimes\mathrm{id}_{\mathcal{W}})\circ G$ for any $\eta\in\hat{\Gamma}$. 
By Elliott-Gong-Lin-Niu's classification theorem \cite[Theorem 7.5]{EGLN}, there exists an isomorphism 
$\theta$ from $(A\otimes\mathcal{W})\rtimes_{\alpha\otimes\mathrm{id}_{\mathcal{W}}}\Gamma$ onto 
$(B\otimes\mathcal{W})\rtimes_{\beta\otimes\mathrm{id}_{\mathcal{W}}}\Gamma$ 
such that $T(\theta)=G$. 
Since we have 
$$
T(\theta \circ \hat{\alpha}_{\eta}\otimes\mathrm{id}_{\mathcal{W}} \circ \theta^{-1})
=G^{-1}\circ T(\hat{\alpha}_{\eta}\otimes\mathrm{id}_{\mathcal{W}})\circ G
=T(\hat{\beta}_{\eta}\otimes\mathrm{id}_{\mathcal{W}}),
$$ 
$\hat{\beta}_{\eta}\otimes\mathrm{id}_{\mathcal{W}}$ is 
approximately unitarily equivalent to 
$\theta \circ \hat{\alpha}_{\eta}\otimes\mathrm{id}_{\mathcal{W}} \circ \theta^{-1}$ for any 
$\eta\in \hat{\Gamma}$ by \cite[Theorem 1.0.1]{Rob} and \cite[Proposition 6.2.3]{Rob}.
Therefore \cite[Theorem 3.5]{Na0} implies that $\hat{\alpha}\otimes\mathrm{id}_{\mathcal{W}}$ and 
$\hat{\beta}\otimes\mathrm{id}_{\mathcal{W}}$ are conjugate because 
$\hat{\alpha}\otimes\mathrm{id}_{\mathcal{W}}$ and 
$\hat{\beta}\otimes\mathrm{id}_{\mathcal{W}}$ have the Rohlin property
by Theorem \ref{thm:main} and \cite[Proposition 4.4]{Na0}.  
Consequently, \cite[Proposition 5.4]{Na0} implies that $\alpha\otimes\mathrm{id}_{\mathcal{W}}$ 
and $\beta\otimes\mathrm{id}_{\mathcal{W}}$ are cocycle conjugate. 
\ \\
(ii) Since we can show the only if part by the same argument as in (i), we shall show the if part. 
By Proposition \ref{pro:conjugacy-trace-spaces}, there exists 
an affine homeomorphism $F$
from $T_1(\pi_{\tau_B}(B)^{''}\rtimes_{\tilde{\beta}}\Gamma)$ onto 
$T_1(\pi_{\tau_B}(B)^{''}\rtimes_{\tilde{\alpha}}\Gamma)$ such that 
$F(\tau)(e_{\tilde{\alpha}})=\tau (e_{\tilde{\beta}})$ for any $\tau\in 
T_1(\pi_{\tau_B}(B)^{''}\rtimes_{\tilde{\alpha}}\Gamma)$ and 
$F\circ T(\hat{\tilde{\beta}}_{\eta})=T(\hat{\tilde{\alpha}}_{\eta})\circ F$ for any 
$\eta\in \hat{\Gamma}$. 
Note that we have $e_{\alpha}=e_{\tilde{\alpha}}$ and $e_{\beta}=e_{\tilde{\beta}}$ 
because we regard $M(A\rtimes_{\alpha}\Gamma)$ and $M(B\rtimes_{\beta}\Gamma)$ as 
subalgebras of  $\pi_{\tau_A}(A)^{''}\rtimes_{\tilde{\alpha}}\Gamma$ and 
$\pi_{\tau_B}(B)^{''}\rtimes_{\tilde{\beta}}\Gamma$, respectively. 
By the same argument as in (i), we see that there exists 
an isomorphism 
$\theta$ from $(A\otimes\mathcal{W})\rtimes_{\alpha\otimes\mathrm{id}_{\mathcal{W}}}\Gamma$ onto 
$(B\otimes\mathcal{W})\rtimes_{\beta\otimes\mathrm{id}_{\mathcal{W}}}\Gamma$ 
such that 
$\hat{\beta}_{\eta}\otimes\mathrm{id}_{\mathcal{W}}$ is 
approximately unitarily equivalent to 
$\theta\circ \hat{\alpha}_{\eta}\otimes\mathrm{id}_{\mathcal{W}} \circ \theta^{-1}$ for any 
$\eta\in \hat{\Gamma}$. 
Since we have  
$e_{\alpha\otimes\mathrm{id}_{\mathcal{W}}}= e_{\alpha}\otimes 1_{\mathcal{W}^{\sim}}$ and 
$e_{\beta\otimes\mathrm{id}_{\mathcal{W}}}= e_{\beta}\otimes 1_{\mathcal{W}^{\sim}}$, 
$$
\tau\otimes\tau_{\mathcal{W}} (\theta (e_{\alpha\otimes\mathrm{id}_{\mathcal{W}}}))
=F(\tau)\otimes \tau_{\mathcal{W}}(e_{\alpha}\otimes1_{\mathcal{W}})
=\tau \otimes\tau_{\mathcal{W}}(e_{\beta}\otimes1_{\mathcal{W}})
=\tau \otimes\tau_{\mathcal{W}}(e_{\beta\otimes\mathrm{id}_{\mathcal{W}}})
$$
for any $\tau\in T_1(B\rtimes_{\beta}\Gamma)$. Therefore  
\cite[Corollary 4.5]{Na0} implies that $\alpha\otimes\mathrm{id}_{\mathcal{W}}$ 
and $\beta\otimes\mathrm{id}_{\mathcal{W}}$ are conjugate because 
$\alpha\otimes\mathrm{id}_{\mathcal{W}}$ and $\beta\otimes\mathrm{id}_{\mathcal{W}}$ are 
approximately representable by Theorem \ref{thm:main}. 
\end{proof}

The following corollary is an immediate consequence of the theorem above and Theorem 
\ref{thm:jones}. 

\begin{cor}\label{main:cor}
Let $A$ and $B$ be 
simple separable nuclear monotracial C$^*$-algebras, and let $\alpha$ and $\beta$ be 
outer actions of a finite abelian group $\Gamma$ on $A$ and $B$, respectively. 
Assume that the characteristic invariants of $\tilde{\alpha}$ and $\tilde{\beta}$ are trivial. 
Then \ \\
(i) $\alpha\otimes \mathrm{id}_{\mathcal{W}}$ on $A\otimes\mathcal{W}$ and 
$\beta\otimes \mathrm{id}_{\mathcal{W}}$ on $B\otimes\mathcal{W}$ are cocycle conjugate 
if and only if $N(\tilde{\alpha})=N(\tilde{\beta})$; \ \\
(ii) $\alpha\otimes \mathrm{id}_{\mathcal{W}}$ on $A\otimes\mathcal{W}$ and 
$\beta\otimes \mathrm{id}_{\mathcal{W}}$ on $B\otimes\mathcal{W}$ are conjugate if and only if 
$N(\tilde{\alpha})=N(\tilde{\beta})$ and $i(\tilde{\alpha})=i(\tilde{\beta})$.
\end{cor}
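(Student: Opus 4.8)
The plan is to deduce the corollary by combining the theorem preceding it with Ocneanu's and Jones' classification theorem (Theorem~\ref{thm:jones}). The one point to record before invoking Theorem~\ref{thm:jones} is that $\pi_{\tau_A}(A)^{''}$ and $\pi_{\tau_B}(B)^{''}$ are \emph{the} injective II$_1$ factor. Indeed, since $A$ and $B$ are nuclear, the von Neumann algebras $\pi_{\tau_A}(A)^{''}$ and $\pi_{\tau_B}(B)^{''}$ are injective; as noted in Section~\ref{sec:pre} they are II$_1$ factors because $A$ and $B$ carry outer actions and hence are not of type~I; and by Connes' uniqueness theorem there is, up to isomorphism, only one injective II$_1$ factor $R$. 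Fix isomorphisms $\rho_A\colon \pi_{\tau_A}(A)^{''}\to R$ and $\rho_B\colon \pi_{\tau_B}(B)^{''}\to R$ and transport $\tilde\alpha,\tilde\beta$ to actions $\rho_A\tilde\alpha\rho_A^{-1},\ \rho_B\tilde\beta\rho_B^{-1}$ of $\Gamma$ on $R$. Conjugation by an isomorphism changes neither the subgroup $N(\cdot)\subset\Gamma$ (inner-ness of $\tilde\alpha_g$ is an intrinsic property) nor the invariant $i(\cdot)$ (it is built from the unique trace and the group $N(\cdot)$), preserves triviality of the characteristic invariant, and clearly preserves (cocycle) conjugacy classes; thus $N(\rho_A\tilde\alpha\rho_A^{-1})=N(\tilde\alpha)$, $i(\rho_A\tilde\alpha\rho_A^{-1})=i(\tilde\alpha)$, and likewise for $\beta$.

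For (i): by the theorem preceding this corollary, $\alpha\otimes\mathrm{id}_{\mathcal W}$ and $\beta\otimes\mathrm{id}_{\mathcal W}$ are cocycle conjugate if and only if $\tilde\alpha$ and $\tilde\beta$ are cocycle conjugate, equivalently $\rho_A\tilde\alpha\rho_A^{-1}$ and $\rho_B\tilde\beta\rho_B^{-1}$ are cocycle conjugate actions on $R$. Since both characteristic invariants are trivial, Theorem~\ref{thm:jones}(i) says this holds if and only if $N(\rho_A\tilde\alpha\rho_A^{-1})=N(\rho_B\tilde\beta\rho_B^{-1})$, i.e.\ $N(\tilde\alpha)=N(\tilde\beta)$. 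Chaining the equivalences gives (i).

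For (ii): by the same theorem, $\alpha\otimes\mathrm{id}_{\mathcal W}$ and $\beta\otimes\mathrm{id}_{\mathcal W}$ are conjugate if and only if $\tilde\alpha$ and $\tilde\beta$ are conjugate, equivalently $\rho_A\tilde\alpha\rho_A^{-1}$ and $\rho_B\tilde\beta\rho_B^{-1}$ are conjugate on $R$; Theorem~\ref{thm:jones}(ii) rephrases the latter as $N(\tilde\alpha)=N(\tilde\beta)$ together with $i(\tilde\alpha)=i(\tilde\beta)$. This yields (ii).

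I expect essentially no obstacle here: all of the analytic content has already been absorbed into the preceding theorem (through Theorem~\ref{thm:main}, Lemma~\ref{lem:strict-comparison}, and the classification theorems of Elliott--Gong--Lin--Niu and Robert) and into Theorem~\ref{thm:jones}. The only thing requiring a little care is the bookkeeping of the invariants under the identifications $\rho_A,\rho_B$ with the model injective II$_1$ factor $R$ --- namely that $N(\cdot)$ and $i(\cdot)$ are genuine isomorphism invariants --- which is routine.
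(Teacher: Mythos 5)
Your proposal is correct and follows exactly the route the paper takes: the paper states the corollary as an immediate consequence of the preceding classification theorem combined with Theorem~\ref{thm:jones}. The extra bookkeeping you supply (identifying $\pi_{\tau_A}(A)^{''}$ and $\pi_{\tau_B}(B)^{''}$ with the injective II$_1$ factor via Connes' theorem and checking that $N(\cdot)$ and $i(\cdot)$ are preserved) is the implicit content of the paper's ``immediate consequence.''
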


If $\delta$ is an action of a finite cyclic group $\Gamma$ with prime order, 
then $N(\delta)=\Gamma$ or $N(\delta)=\{\iota\}$. 
Hence we have the following corollary. 

\begin{cor}
Let $A$ and $B$ be 
simple separable nuclear monotracial C$^*$-algebras, and let $\alpha$ and $\beta$ be 
outer actions of a finite cyclic group $\Gamma$ with prime order on $A$ and $B$, respectively. 
Then \ \\
(i) $\alpha\otimes \mathrm{id}_{\mathcal{W}}$ on $A\otimes\mathcal{W}$ and 
$\beta\otimes \mathrm{id}_{\mathcal{W}}$ on $B\otimes\mathcal{W}$ are cocycle conjugate 
if and only if $N(\tilde{\alpha})=N(\tilde{\beta})$; \ \\
(ii) $\alpha\otimes \mathrm{id}_{\mathcal{W}}$ on $A\otimes\mathcal{W}$ and 
$\beta\otimes \mathrm{id}_{\mathcal{W}}$ on $B\otimes\mathcal{W}$ are conjugate if and only if 
$N(\tilde{\alpha})=N(\tilde{\beta})$ and $i(\tilde{\alpha})=i(\tilde{\beta})$.
\end{cor}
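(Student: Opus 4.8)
The plan is to deduce this corollary purely formally from the preceding theorem (the one classifying $\alpha\otimes\mathrm{id}_{\mathcal{W}}$ via cocycle conjugacy / conjugacy of $\tilde\alpha$, $\tilde\beta$) together with Jones' classification Theorem \ref{thm:jones}, after first observing that the hypothesis on the characteristic invariant is automatically satisfied. So the work splits into two essentially independent pieces: (a) showing that an outer action of a finite cyclic group of prime order on the injective II$_1$ factor always has trivial characteristic invariant, and (b) bookkeeping to combine Theorem \ref{thm:jones} with the previous theorem.

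For piece (a): let $\Gamma=\mathbb{Z}/p\mathbb{Z}$ with $p$ prime, and let $\delta=\tilde\alpha$ be the induced (automatically outer, since $\alpha$ is outer and $A$ is simple with a unique trace so $\pi_{\tau_A}(A)''$ is a factor) action on $M:=\pi_{\tau_A}(A)''$. The subgroup $N(\delta)\le\Gamma$ is either trivial or all of $\Gamma$; but $\delta$ is outer, so by definition $N(\delta)=\{\iota\}$. When $N(\delta)=\{\iota\}$ the characteristic invariant $\lambda_\delta$ is a function on $\Gamma\times\{\iota\}$, and $\lambda_\delta(g,\iota)=1$ for all $g$ since $v_\iota=1$ and $\delta_g(1)=1$. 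Hence the characteristic invariant is trivial — in fact this part of the argument shows more generally that any outer action of any finite abelian group has trivial characteristic invariant, which is all that is needed here. (If one wanted the prime-order statement to really use primality, the point would be that for a general finite abelian $\Gamma$ one could have $N(\delta)=\Gamma$ with a nontrivial invariant, but $\mathbb{Z}/p\mathbb{Z}$ has no proper nontrivial subgroups, so outerness forces $N(\delta)=\{\iota\}$; either way the conclusion is the same.) I would phrase the remark in the excerpt — ``every action of a finite cyclic group with prime order has the trivial characteristic invariant'' — as the content of this step, noting the outerness hypothesis makes it immediate.

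For piece (b): since $\pi_{\tau_A}(A)''$ and $\pi_{\tau_B}(B)''$ are the injective II$_1$ factor (they are injective by nuclearity of $A$, $B$, and II$_1$ factors by simplicity plus monotraciality, and hyperfinite by Connes' theorem), I apply Corollary \ref{main:cor}: under the (now verified) triviality of the characteristic invariants, $\alpha\otimes\mathrm{id}_{\mathcal W}$ and $\beta\otimes\mathrm{id}_{\mathcal W}$ are cocycle conjugate iff $N(\tilde\alpha)=N(\tilde\beta)$, and conjugate iff additionally $i(\tilde\alpha)=i(\tilde\beta)$. Thus both statements (i) and (ii) of the present corollary follow at once by invoking Corollary \ref{main:cor}. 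The proof is therefore a one-line deduction once the characteristic-invariant observation is in place; I would write it as: ``It is easy to see that every action of a finite cyclic group of prime order has trivial characteristic invariant (indeed $N(\tilde\alpha)$ is a subgroup of the prime-order group $\Gamma$ and is trivial by outerness). Hence the conclusion follows from Corollary \ref{main:cor}.''

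The only genuine obstacle is a non-obstacle: one must be careful that ``outer'' for $\alpha$ on $A$ really does force $\tilde\alpha$ to be outer on $\pi_{\tau_A}(A)''$, and that this in turn forces $N(\tilde\alpha)=\{\iota\}$ — but this is exactly what ``outer'' means (combined with the fact, recorded in the excerpt, that $\pi_{\tau_A}(A)''$ is a II$_1$ factor), so there is nothing deep here; the corollary is a straightforward specialization. The expository point to get right is simply to state clearly \emph{why} the characteristic invariant is trivial, since that is the sole hypothesis of Corollary \ref{main:cor} that needs checking.
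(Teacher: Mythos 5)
There is a genuine gap in piece (a) of your argument: you conflate \emph{outer} with \emph{strongly outer}. In this paper, $\alpha$ being outer means $N(\alpha)=\{\iota\}$, i.e.\ no $\alpha_g$ ($g\neq\iota$) is inner on $A$ via a multiplier unitary; it does \emph{not} follow that the induced action $\tilde{\alpha}$ on the factor $\pi_{\tau_A}(A)''$ is outer. That stronger property is what the introduction calls strong outerness, and the entire point of the paper is to treat outer actions for which $N(\tilde{\alpha})$ is nontrivial (the model actions of Section 4 have $N(\alpha)=\{\iota\}$ but $N(\tilde{\alpha})=N$ for an arbitrary prescribed subgroup $N$). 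So your step ``$\delta$ is outer, so by definition $N(\delta)=\{\iota\}$'' is false, and your parenthetical claim that \emph{any} outer action of \emph{any} finite abelian group has trivial characteristic invariant would render the ``only if'' direction of Theorem \ref{thm:main} vacuous and collapse the invariants $N(\tilde{\alpha})$, $i(\tilde{\alpha})$ appearing in the very corollary you are proving. If your reasoning were correct, statement (i) would just say all such actions are cocycle conjugate, which is not what the corollary asserts.

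The conclusion you need (triviality of the characteristic invariant for prime order) is still true, but the correct argument is different and genuinely uses primality in the remaining case. Since $\Gamma=\mathbb{Z}_p$, either $N(\tilde{\alpha})=\{\iota\}$ (where triviality is immediate, as you note) or $N(\tilde{\alpha})=\Gamma$. In the latter case, fix $h\in\Gamma\setminus\{\iota\}$ and $v_h$ with $\tilde{\alpha}_h=\mathrm{Ad}(v_h)$. The relation $\tilde{\alpha}_{g_1g_2}(v_h)=\tilde{\alpha}_{g_1}(\tilde{\alpha}_{g_2}(v_h))$ shows that $g\mapsto\lambda_{\tilde{\alpha}}(g,h)$ is a character of $\Gamma$, and $\tilde{\alpha}_h(v_h)=v_hv_hv_h^*=v_h$ gives $\lambda_{\tilde{\alpha}}(h,h)=1$; since $h$ generates $\mathbb{Z}_p$, this character is trivial, so $\lambda_{\tilde{\alpha}}(g,h)=1$ for all $g$. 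With that repaired, piece (b) is exactly the paper's (one-line) deduction from Corollary \ref{main:cor}, and is fine.
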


\section{Model actions}

In this section, we shall construct simple separable nuclear monotracial C$^*$-algebras $A$ 
and outer actions $\alpha$ on $A$ with arbitrary invariants $(N(\tilde{\alpha}), i(\tilde{\alpha}))$ 
in Corollary \ref{main:cor} with the restrictions demanded by Proposition \ref{pro:realized-invariant}.
In particular, $A$ can be chosen to be approximately finite-dimensional (AF) algebras.

We shall give a summary of the construction.
First, we construct ``inner'' actions $\alpha$ on a simple monotracial AF algebra $A$ with arbitrary 
invariants in Lemma \ref{lem:jones}. 
Note that if $\beta$ is an action of $\Gamma$ on a simple monotracial C$^*$-algebra $B$ 
with $N(\beta)=\{\iota\}$ and $N(\tilde{\beta})=\Gamma$, then we have 
$N(\alpha \otimes \beta)=\{\iota\}$ and $N(\tilde{\alpha}\otimes\tilde{\beta})=N(\tilde{\alpha})$ 
for any  action of $\Gamma$ on a simple monotracial C$^*$-algebra $A$. 
Hence if we can construct a simple monotracial AF algebra $B$ and an action $\beta$ of 
$\Gamma$ on $B$ with $N(\beta)=\{\iota\}$ and $N(\tilde{\beta})=\Gamma$, then we obtain 
actions $\alpha\otimes\beta$ with arbitrary invariant $N(\tilde{\alpha}\otimes\tilde{\beta})$. 
Note that constructing such an action $\beta$ 
is equivalent to constructing a unitary representation $u$ of $\Gamma$ on 
$\pi_{\tau_B}(B)^{''}$ such that $u$ induces an action on $B$. We construct such representations in 
Lemma \ref{lem:outer}. (We construct unitary representations 
of cyclic groups for simplicity.) 
On the other hand, the inner invariant $i(\tilde{\alpha}\otimes\mathrm{Ad}(u))$ is not equal to 
$i(\tilde{\alpha})$ unless $|\tilde{\tau}_{B}(u_g)|=1$ for any $g\in \Gamma$. 
(Note that $|\tilde{\tau}_{B}(u_g)|=1$ for any $g\in \Gamma$ is equivalent to $\mathrm{Ad}(u_g)=
\mathrm{id}_{\pi_{\tau_B}(B)^{''}}$ for any $g\in \Gamma$.) 
But if $\tilde{\tau}_{B}(u_g)$ is ``near'' $1$ for any $g\in \Gamma$, then 
$i(\tilde{\alpha}\otimes\mathrm{Ad}(u))$ is ``near'' $i(\tilde{\alpha})$. 
Hence applying Lemma \ref{lem:jones} to a ``small'' perturbation of the desired inner 
invariant (we need to assume that $m$ has full support here) and considering the tensor 
product type action, we obtain the conclusion. 
Of course, we need to construct unitary representations $u$ in Lemma \ref{lem:outer} such that 
$\tilde{\tau}_{B}(u_g)$ is ``near'' $1$ for any $g\in \Gamma$. 

The following lemma is based on \cite[Proposition 1.5.8 and Theorem 1.5.11]{Jones}. 
Recall that $\Phi_{v}$ is the homomorphism from $\mathbb{C}N$ to $A$ defined by 
$\Phi_v(\sum_{h\in N}c_hh)=\sum_{h\in N}c_hv_h$ where $v$ is a unitary representation of $N$ 
on $A$. 

\begin{lem}\label{lem:jones}
Let $\Gamma$ be a finite abelian group, and let $N$ be a subgroup of $\Gamma$. 
Suppose that $m$ is a probability measure on the set $P_{N}$ of minimal projections in 
$\mathbb{C}N$. 
Then 
there exist a simple unital monotracial AF algebra $A$, an action $\alpha$ on $A$ of $\Gamma$ 
and a unitary representation $v$ of $N$ on $A$ such that $N(\alpha)=N(\tilde{\alpha})=N$, 
$\alpha_h=\mathrm{Ad}(v_h)$, $\alpha_g(v_h)=v_h$ for any $h\in N$ and $g\in \Gamma$ 
and $\tau_{A}(\Phi_{v}(p))=m(p)$ for any $p\in P_N$. 
\end{lem}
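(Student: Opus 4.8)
The plan is to realize $A$ as a tensor product $A'\otimes C$, where $A'$ is an AF algebra carrying a unitary representation of $\Gamma$ that records the measure $m$, and $C$ is a UHF algebra carrying a strongly outer action of the quotient $\Gamma/N$ which forces $N(\alpha)=N(\tilde\alpha)=N$. Throughout I identify $P_N$ with $\hat N$ via $p_\eta:=\frac1{|N|}\sum_{h\in N}\eta(h)h$, and I abbreviate $q:=|\Gamma/N|$.

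First I would build $A'$. The restriction map $r\colon\hat\Gamma\to\hat N$ is surjective with fibres of size $q$, so the numbers $\mu_\chi:=m(p_{r(\chi)})/q$ ($\chi\in\hat\Gamma$) form a probability vector on $\hat\Gamma$ with $\sum_{\chi\in r^{-1}(\eta)}\mu_\chi=m(p_\eta)$ for every $\eta\in\hat N$. By the Effros--Handelman--Shen theorem I take a simple unital monotracial AF algebra $A'$ with $(K_0(A'),K_0(A')_+,[1]_0)\cong(G,G\cap\mathbb{R}_+,1)$ for a countable dense subgroup $G\subseteq\mathbb{R}$ containing $1$ and all the $\mu_\chi$; then there are mutually orthogonal projections $\{Q_\chi\}_{\chi\in\hat\Gamma}$ in $A'$ with $\sum_\chi Q_\chi=1$ and $\tau_{A'}(Q_\chi)=\mu_\chi$ (split $1_{A'}$ successively, some $Q_\chi$ possibly $0$). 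Set $V_g:=\sum_{\chi\in\hat\Gamma}\overline{\chi(g)}Q_\chi$; since $\Gamma$ is abelian this is a unitary representation $V\colon\Gamma\to\mathcal U(A')$ with $\Phi_V(q_\chi)=Q_\chi$ for the minimal projections $q_\chi\in\mathbb{C}\Gamma$, and a Fourier inversion gives $\Phi_{V|_N}(p_\eta)=\sum_{\chi\in r^{-1}(\eta)}Q_\chi$, hence $\tau_{A'}(\Phi_{V|_N}(p_\eta))=m(p_\eta)$.

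Next I produce $C$. If $N=\Gamma$, take $C:=\mathbb{C}$ with the trivial action. Otherwise set $C:=M_{q^\infty}$ and, identifying $M_q(\mathbb{C})$ with $B(\ell^2(\Gamma/N))$ and $\lambda$ with the left regular representation of $\Gamma/N$, put $\bar\gamma_s:=\bigotimes_{n=1}^\infty\mathrm{Ad}\,\lambda(s)$. Using the rank-one position projections placed in arbitrarily far tensor legs as a Rohlin tower, $\bar\gamma$ has the Rohlin property, so by Proposition \ref{pro:rohlin-outer} the induced action $\widetilde{\bar\gamma}$ on $\pi_{\tau_C}(C)''\cong R$ (the hyperfinite $\mathrm{II}_1$ factor) is outer. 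Now define
$$
A:=A'\otimes C,\qquad v_h:=V_h\otimes 1_C\ (h\in N),\qquad \alpha_g:=\mathrm{Ad}(V_g)\otimes\bar\gamma_{gN}\ (g\in\Gamma).
$$
Then $A$ is a simple unital monotracial AF algebra, $\alpha$ is an action of $\Gamma$, $v$ is a unitary representation of $N$, and $\alpha_h=\mathrm{Ad}(V_h)\otimes\mathrm{id}_C=\mathrm{Ad}(v_h)$, $\alpha_g(v_h)=V_gV_hV_g^*\otimes 1_C=v_h$, while $\tau_A(\Phi_v(p))=\tau_{A'}(\Phi_{V|_N}(p))=m(p)$ by the previous paragraph.

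Finally I would compute the invariants. Since $\alpha_h=\mathrm{Ad}(v_h)$ with $v_h\in\mathcal U(A)$ for $h\in N$, we get $N\subseteq N(\alpha)\subseteq N(\tilde\alpha)$. For the reverse inclusion fix $g\in\Gamma\setminus N$; then on $\pi_{\tau_A}(A)''\cong\pi_{\tau_{A'}}(A')''\otimes R$ we have $\widetilde{\alpha_g}=\mathrm{Ad}(\pi_{\tau_{A'}}(V_g))\otimes\widetilde{\bar\gamma}_{gN}$, and if this were inner then so would be $\mathrm{id}\otimes\widetilde{\bar\gamma}_{gN}$; but $\pi_{\tau_{A'}}(A')''$ is a factor, so any implementing unitary lies in $1\otimes R$, forcing $\widetilde{\bar\gamma}_{gN}$ inner on $R$ and contradicting outerness of $\widetilde{\bar\gamma}$ together with $gN\neq\bar\iota$. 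Hence $g\notin N(\tilde\alpha)$, so $N(\alpha)=N(\tilde\alpha)=N$. The step I expect to require the most care is precisely this last one — transporting outerness of $\bar\gamma$ through the tensor product and thereby controlling the von Neumann invariant $N(\tilde\alpha)$ (and hence $N(\alpha)$); the rest is Fourier duality on $\Gamma$ together with routine AF-algebra constructions.
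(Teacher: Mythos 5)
Your proposal is correct, but it takes a genuinely different route from the paper's. The paper follows Jones' construction: it forms $B=M_{|\Gamma|^{\infty}}\rtimes_{\mu^{\Gamma}|_N}N$ for the Rohlin action $\mu^{\Gamma}$ restricted to $N$, extends $\mu^{\Gamma}$ to an action $\beta$ of $\Gamma$ on $B$ and quotes \cite[Proposition 1.5.8 and Theorem 1.5.11]{Jones} to get $N(\beta)=N(\tilde{\beta})=N$, takes the implementing unitaries $\lambda_h$ as the representation of $N$ (at which point every $\Phi_{\lambda}(p)$ has trace $1/|N|$), and then corrects the trace values by cutting $B\otimes C$ down by the projection $e=\sum_{p}\Phi_{\lambda}(p)\otimes q_p$, where $C$ is an auxiliary AF algebra containing projections $q_p$ with $\tau_C(q_p)=m(p)$. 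You instead encode $m$ directly into the spectral projections of a unitary representation $V$ of all of $\Gamma$ (spreading the measure uniformly over the fibres of the restriction map $\hat{\Gamma}\to\hat{N}$, which is what makes $\alpha_g(v_h)=v_h$ and the Fourier inversion $\tau_{A'}(\Phi_{V|_N}(p_\eta))=m(p_\eta)$ come out), and you enforce $N(\tilde{\alpha})=N$ by tensoring with the Rohlin action of the quotient $\Gamma/N$ and running a relative-commutant argument $(M_1\otimes 1)'\cap(M_1\,\overline{\otimes}\,R)=\mathbb{C}\otimes R$ in $\pi_{\tau_{A'}}(A')''\,\overline{\otimes}\,R$. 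Both arguments are sound; yours avoids the corner construction and the black-box appeal to Jones' memoir for the computation of $N(\tilde{\alpha})$, at the cost of the factoriality/relative-commutant step, which you correctly identify as the delicate point and which is standard. The constructions produce different algebras, but the lemma only asserts existence, so this is immaterial.
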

\begin{proof}
Define an action $\mu^{\Gamma}$ of $\Gamma$ on $M_{|\Gamma|^{\infty}}
\cong \bigotimes _{n=1}^{\infty} M_{|\Gamma|}(\mathbb{C})$ by 
$\mu^{\Gamma}:= \bigotimes_{n=1}^{\infty} \mathrm{Ad}(\lambda)$ where $\lambda$ is the left 
regular representation of $\Gamma$. 
Then $\mu^{\Gamma}$ is the Rohlin action (see, for example, \cite[Example 3.2]{I1} 
and \cite[1.5]{Jones}). Let $B:=M_{|\Gamma|^{\infty}}\rtimes_{\mu^{\Gamma}|_N}N$.
Then $B$ is a unital AF algebra because $\mu^{\Gamma}|_N$ is an action of product type. 
Since $N(\tilde{\mu}^{\Gamma}|_N)=\{\iota\}$, $B$ is simple and monotracial. 
Note that the unique tracial state on $B$ is given by 
$\tau_{M_{|\Gamma|^{\infty}}}\circ E_{\mu^{\Gamma}|_N}$. 
Define an action $\beta$ on 
$B$ of $\Gamma$ by $\beta_g(\sum_{h\in N}a_h\lambda_h)=
\sum_{h\in N}\mu^{\Gamma}_g(a_h)\lambda_h$ for any $g\in \Gamma$. 
By the same argument as in the proof 
of \cite[Proposition 1.5.8]{Jones}, we see that $N(\tilde{\beta})=N(\beta)=N$. 
In particular, the map $\lambda$ given by $N \ni h \mapsto \lambda_h\in B$ 
is a unitary representation of $N$ on $B$ 
such that $\beta_g(\lambda_h)=\lambda_h$ for any $g\in\Gamma$ and $h\in N$. 

The Effros-Handelman-Shen theorem \cite{EHS} (or \cite[Theorem 2.2]{Ell-order}) implies that 
there exists a simple unital  monotracial AF algebra $C$ such that 
$K_0(C)$ is the additive subgroup of $\mathbb{R}$ generated by $\mathbb{Q}$ and 
$\{m(p)\; |\; p\in P_{N}\}$, $K_0(C)_{+}=K_0(C)\cap \mathbb{R}_{+}$ and $[1]_0=1$. 
For any $p\in P_N$, there exists a projection $q_p$ in $C$ such that $\tau_{C}(q_p)=m(p)$, 
and put $e_{p}=\Phi_{\lambda}(p)\otimes q_p\in B\otimes C$. 
Then we have $e_{p}\in (B\otimes C)^{\beta\otimes\mathrm{id}_{C}}
\cap \{\lambda_h\otimes 1\; |\; h\in  N\}^{\prime}$ for any $p\in P_N$. 
Since there exists an element $\eta$ in $\hat{N}$ such that 
$\Phi_{\lambda}(p)=\frac{1}{|N|}\sum_{h\in N}\eta (h) \lambda_h$, 
$
\tau_{B} (\Phi_{\lambda}(p))= \frac{1}{|N|}
$
for any $p\in P_N$.  Hence we have $\tau_{B\otimes C}(e_p)= \frac{m(p)}{|N|}$ for any $p\in P_N$.

Put $e:= \sum_{p\in P_N}e_p$, and let $A:= e(B\otimes C)e$. 
Then $A$ is a simple unital monotracial 
AF algebra. Since we have $e\in  (B\otimes C)^{\beta\otimes\mathrm{id}_{C}}$, 
$\beta\otimes\mathrm{id}_{C}$ induces an action $\alpha$ on $A$ of $\Gamma$.
By the same reason as in the proof of \cite[Theorem 1.5.11]{Jones}, we have 
$N(\tilde{\alpha})=N(\tilde{\beta}\otimes \mathrm{id}_{C})=N$. Define a unitary representation $v$ of 
$N$ on $A$ by $v_h:= (\lambda_h\otimes 1)e$ for any $h\in N$. It is easy to see that 
$\alpha_h=\mathrm{Ad}(v_h)$ and $\alpha_g(v_h)=v_h$ for any $h\in N$ and $g\in \Gamma$. 
This also implies that $N(\alpha)=N$. Since we have $\Phi_{v}(p)=(\Phi_{\lambda}(p)\otimes 1)e
=e_{p}$, 
$$
\tau_{A}(\Phi_{v}(p))=\frac{\tau_{B\otimes C}(e_p)}{\tau_{B\otimes C}(e)}
=\frac{\frac{m(p)}{|N|}}{\frac{1}{|N|}}=m(p)
$$
for any $p\in P_N$. Therefore the proof is complete. 
\end{proof}

We recall properties of characters of finite abelian groups. If $N$ is 
a finite abelian group, then we have 
$$
\sum_{h\in N}\eta (h) 
=
\left\{\begin{array}{cl}
|N| & \text{if}\quad \eta=\iota \in \hat{N} \\
0 & \text{if}\quad \eta\in \hat{N}\setminus \{\iota\}
\end{array}
\right.
\quad\text{and}\quad
\sum_{\eta \in \hat{N}}\eta (h) 
=
\left\{\begin{array}{cl}
|N| & \text{if}\quad h=\iota \in N \\
0 & \text{if}\quad h\in N\setminus \{\iota \}.
\end{array}
\right. 
$$
We denote by $\mathbb{Z}_k$ the cyclic group of order $k$. For any natural number $k$, let 
$\zeta_{k}:=e^{\frac{2\pi i}{k}}$. Note that $\hat{\mathbb{Z}}_k$ can be identified with 
$\{\zeta_k^l\; | \; 1 \leq l \leq k\}$ by the pairing $\mathbb{Z}_k \times \hat{\mathbb{Z}_k}\to 
\mathbb{T}$ given by $([l], \zeta_k)\mapsto \zeta_k^{l}$.  
Also, if $\zeta$ is a root of unity and not equal to $1$, then we have $\sum_{j=1}^{n}\zeta^j=0$.

\begin{lem}\label{lem:outer}
Let $k$ be a natural number with $k\geq 2$ and $r$ a real number with $0<r<1$. 
Then there exist a simple unital monotracial  
AF algebra $A$ and a unitary element $V$ in 
$\pi_{\tau_{A}}(A)^{''}$ such that $\mathrm{Ad}(V)$ induces an 
outer action of $\mathbb{Z}_k$ on $A$, $V^{k}=1$ and 
$\tilde{\tau}_{A}(V^{l})=r$ for any $1\leq l \leq k-1$.  
\end{lem}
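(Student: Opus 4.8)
Let $k \geq 2$ be a natural number and $0 < r < 1$ a real number. Then there exist a simple unital monotracial AF algebra $A$ and a unitary $V \in \pi_{\tau_A}(A)''$ such that $\mathrm{Ad}(V)$ induces an outer action of $\mathbb{Z}_k$ on $A$, $V^k = 1$, and $\tilde{\tau}_A(V^l) = r$ for all $1 \le l \le k-1$.

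Let me think about how to construct this.

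The plan is to produce the pair $(A,V)$ by first fixing the unitary $V$ inside the injective II$_1$ factor and then building $A$ around it as a weakly dense, $\mathrm{Ad}(V)$-invariant AF subalgebra whose $K_0$-trace data is constrained so as to force outerness. The starting observation is an elementary Fourier computation: if $V^k=1$ and $q_0,\dots,q_{k-1}$ are its spectral projections, so that $V=\sum_{j=0}^{k-1}\zeta_k^{\,j}q_j$ with $\sum_j q_j=1$, then $V^l=\sum_{j}\zeta_k^{\,jl}q_j$ and hence $\tilde{\tau}_A(V^l)=\sum_{j=0}^{k-1}\zeta_k^{\,jl}\tilde{\tau}_A(q_j)$. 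Inverting the Fourier transform over $\mathbb{Z}_k$, the requirement that $\tilde{\tau}_A(V^l)=r$ for every $1\le l\le k-1$ is equivalent to $\tilde{\tau}_A(q_0)=\frac{1+(k-1)r}{k}$ and $\tilde{\tau}_A(q_j)=\frac{1-r}{k}$ for $1\le j\le k-1$; since $0<r<1$, these $k$ numbers lie in $(0,1)$ and sum to $1$. So it suffices to construct a simple separable unital monotracial AF algebra $A$ and a unitary $V\in\pi_{\tau_A}(A)^{''}$ with $V^k=1$, $\mathrm{Ad}(V)(A)=A$, such that $\mathrm{Ad}(V)$ induces an \emph{outer} action of $\mathbb{Z}_k$ on $A$ and the spectral projections of $V$ have the traces just listed.

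To this end, let $R$ be the injective II$_1$ factor with trace $\tau$, and fix a unitary $V\in R$ with $V^k=1$ whose spectral projections $q_0,\dots,q_{k-1}$ have traces $\frac{1+(k-1)r}{k},\frac{1-r}{k},\dots,\frac{1-r}{k}$ (such pairwise orthogonal projections exist because $R$ is of type II$_1$). Consider the finite set $F:=\bigl\{\tfrac{d+(k-d)r}{k}:d\mid k,\ 1\le d<k\bigr\}\subset(0,1)$; each member of $F$ lies strictly between $0$ and $1$, hence if rational has denominator $\ge 2$, so there is a prime $q$ dividing none of these finitely many denominators, and then $\mathbb{Z}[1/q]\cap F=\emptyset$. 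Next, by an inductive Bratteli-diagram construction I would produce an increasing sequence $A_1\subseteq A_2\subseteq\cdots$ of finite-dimensional unital $\ast$-subalgebras of $R$ with $1_R\in A_1$ such that: (i) $\mathrm{Ad}(V)(A_n)=A_n$ for all $n$; (ii) $\tau(p)\in\mathbb{Z}[1/q]$ for every projection $p\in A_n$; (iii) $\bigcup_n A_n$ is $\|\cdot\|_2$-dense in $R$; and (iv) the connecting inclusions are chosen so that $A:=\overline{\bigcup_n A_n}^{\,\|\cdot\|}$ is simple with a unique tracial state. At the inductive step one works inside the $\mathbb{Z}_k$-grading $R=\bigoplus_{i,j}q_iRq_j$ determined by $V$ (on which $\mathrm{Ad}(V)$ acts by the scalar $\zeta_k^{\,i-j}$ on $q_iRq_j$), and refines $A_n$ by a $q$-adic, $\mathrm{Ad}(V)$-invariant finite-dimensional algebra that approximates in $\|\cdot\|_2$ the next term of a fixed $\|\cdot\|_2$-dense sequence in $R$, choosing the new multiplicities $q$-adically and sufficiently spread out to secure (ii) and (iv).

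Then $A$ is a simple separable nuclear monotracial unital AF algebra, $V^k=1$ together with $\mathrm{Ad}(V)(A)=A$ gives a $\mathbb{Z}_k$-action on $A$, and by (iii) the unique trace of $A$ is $\tau|_A$ and $\pi_{\tau_A}(A)^{''}$ is canonically $R$, so $V\in\pi_{\tau_A}(A)^{''}$, $\tilde{\tau}_A=\tau$ and $\tilde{\tau}_A(V^l)=r$ for $1\le l\le k-1$ by the first paragraph. It remains to check outerness: if $\mathrm{Ad}(V^j)|_A=\mathrm{Ad}(w)$ for some unitary $w\in A$ and some $1\le j\le k-1$, then since both sides are normal automorphisms of $R$ agreeing on the $\|\cdot\|_2$-dense subalgebra $A$, they agree on $R$, so $V^{-j}w\in Z(R)=\mathbb{C}1$ and hence $V^j\in A$; but then the spectral projection $\mathbf{1}_{\{1\}}(V^j)$, a continuous function of $V^j$ since $\mathrm{spec}(V^j)$ is finite, lies in $A$ and has trace $\frac{d+(k-d)r}{k}$ with $d=\gcd(j,k)<k$, contradicting (ii) because every projection of $A$ has trace in $\mathbb{Z}[1/q]$. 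Hence $\mathrm{Ad}(V^j)|_A$ is outer for all $1\le j\le k-1$, that is, $\mathrm{Ad}(V)$ gives an outer action of $\mathbb{Z}_k$ on $A$.

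The main obstacle is the simultaneous realization of (i)--(iv) in a single induction: invariance under $\mathrm{Ad}(V)$ rigidifies part of each $A_n$ and $\|\cdot\|_2$-density forces $A_n$ to grow in every direction of $R$, while (ii) and (iv) demand that the $K_0$-trace data remain $q$-adic and that the resulting dimension group have a unique state. The technical heart is to show that the $q$-adic refinements used at each step can be chosen $\mathrm{Ad}(V)$-equivariantly without enlarging the trace subgroup; granted this (a routine, if tedious, Bratteli-diagram argument), everything else is formal.
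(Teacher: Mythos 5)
Your reduction is sound as far as it goes: the Fourier inversion over $\mathbb{Z}_k$ correctly translates the condition $\tilde{\tau}_A(V^l)=r$ for $1\le l\le k-1$ into prescribing the traces $\frac{1+(k-1)r}{k}$ and $\frac{1-r}{k}$ of the spectral projections of $V$, and your outerness mechanism is valid and genuinely different from the paper's: if $\mathrm{Ad}(V^j)|_A=\mathrm{Ad}(w)$ with $w\in A$, then $V^j\in\mathbb{C}w\subseteq A$, so the spectral projection $\mathbf{1}_{\{1\}}(V^j)=\frac{d}{k}\sum_{i=0}^{k/d-1}V^{ji}$ is a projection in $A$ of trace $\frac{d+(k-d)r}{k}$, which your condition (ii) forbids. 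The problem is that the entire content of the lemma is the existence of the tower $A_1\subseteq A_2\subseteq\cdots$ satisfying (i)--(iv) simultaneously, and you assert it rather than prove it. This is not ``routine'' as stated: the crux is an equivariant local approximation step --- given a finite-dimensional $\mathrm{Ad}(V)$-invariant subalgebra of $R$ with $q$-adic projection traces and an element $x\in R$, produce a larger such subalgebra approximating $x$ in $\|\cdot\|_2$, while also steering the Bratteli diagram toward a simple monotracial limit --- and nothing generic about hyperfiniteness hands you this; a concrete device is needed. The gap is fillable (for instance, realize the action as a product-type action $\bigotimes_n\mathrm{Ad}(u_n)$ on $\bigotimes_n M_{q^{m_n}}(\mathbb{C})$ with $u_n^k=1$ and eigenvalue multiplicities $(q^{m_n}-(k-1)b_n, b_n,\dots,b_n)$, so that $\tau(u_n^l)=1-kb_n/q^{m_n}$ and the $b_n,m_n$ are chosen with $\prod_n(1-kb_n/q^{m_n})=r$; then $w_n=u_1\otimes\cdots\otimes u_n$ is $\|\cdot\|_2$-Cauchy and its limit is the desired $V$), but that construction is precisely what a complete proof must supply.

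For comparison, the paper works in the opposite direction. It first uses the Effros--Handelman--Shen theorem to build a simple unital monotracial AF algebra $B$ whose $K_0$ contains $r^{1/2^n}$ for every $n$, chooses order-$k$ unitaries $u_n\in B$ with $\tau_B(u_n^l)=r^{1/2^n}$ for $1\le l\le k-1$, sets $A=\bigotimes_{n=1}^{\infty}B$, and obtains $V$ as the strong-$*$ limit of $u_1\otimes\cdots\otimes u_n$ (the partial products are $\|\cdot\|_2$-Cauchy because $\sum_n 2^{-n}<\infty$), giving $\tilde{\tau}_A(V^l)=\prod_n r^{1/2^n}=r$. Outerness is then proved not by a trace obstruction but by exhibiting a central sequence of partial isometries on which $\alpha_{[l]}$ acts by the scalar $\zeta_k^l\neq 1$, so that $\alpha_{[l]}$ induces a nontrivial automorphism of $F(A)$. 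Your trace-obstruction argument would be an attractive alternative once a concrete construction is in place, but as written the construction itself --- the heart of the lemma --- is missing.
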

\begin{proof}
By the Effros-Handelman-Shen theorem \cite{EHS} (or \cite[Theorem 2.2]{Ell-order}), 
there exists a simple unital  monotracial AF algebra $B$ such that 
$$
K_0(B)=\left\{a_0+\sum_{n=1}^{m}a_nr^{\frac{1}{2^{n}}}\in\mathbb{R}\; 
|\; m\in\mathbb{N}, a_0, a_1,..., a_m\in\mathbb{Q}\right\}, 
$$ 
$K_0(B)_{+}=K_0(B)\cap \mathbb{R}_{+}$ and $[1]_0=1$. 
For any $n\in\mathbb{N}$, there exist mutually orthogonal projections 
$p_{1,n}$,..., $p_{k, n}$ in $B$ such that $\sum_{j=1}^{k}p_{j, n}=1$, 
$\tau_{B}(p_{k, n})=\frac{1+(k-1)r^{\frac{1}{2^n}}}{k}$ and  
$\tau_{B}(p_{j, n})=\frac{1-r^{\frac{1}{2^n}}}{k}$ for any $1\leq j \leq k-1$ 
because we have 
$\frac{1+(k-1)r^{\frac{1}{2^n}}}{k}\in K_0(B)_{+}\cap (\frac{1}{k},1)$, 
$\frac{1-r^{\frac{1}{2^n}}}{k}\in K_0(B)_{+}\cap (0,\frac{1}{k})$ 
and 
$\frac{1+(k-1)r^{\frac{1}{2^n}}}{k}+(k-1)\times \frac{1-r^{\frac{1}{2^n}}}{k}=1$.
For any $n\in\mathbb{N}$, put 
$$
u_n:= \sum_{j=1}^{k}\zeta_{k}^{j}p_{j,n}\in B.
$$
Then $u_n$ is a unitary element such that $u_n^{k}=1$ and we have 
\begin{align*}
\tau_{B}(u_n^l)
&= \tau_{B}\left(\sum_{j=1}^{k}\zeta_{k}^{lj}p_{j,n}\right)
= \sum_{j=1}^{k}\zeta_k^{lj}\tau_{B}(p_{j,n}) 
= \sum_{j=1}^{k-1}\zeta_k^{lj} \times \frac{1-r^{\frac{1}{2^n}}}{k}+ \frac{1+(k-1)r^{\frac{1}{2^n}}}{k} \\ 
&=-\frac{1-r^{\frac{1}{2^n}}}{k} + \frac{1+(k-1)r^{\frac{1}{2^n}}}{k}=r^{\frac{1}{2^n}}
\end{align*}
for any $1\leq l \leq k-1$. Note that $\zeta_k^{l}$ is a root of unity and not equal to $1$. 
Let $A:=\bigotimes_{n=1}^{\infty} B$. 
Then $A$ is a simple unital  monotracial AF algebra. Note that the unique tracial state $\tau_{A}$ 
is a product of traces $\tau_{B}$ in each component. 
For any $n\in\mathbb{N}$, put 
$$
w_n:= u_1\otimes u_2\otimes \cdots \otimes u_n\otimes 1 \otimes \cdots \in  
\bigotimes_{n=1}^{\infty} B=A.
$$
Then $w_n$ is a unitary element such that $w_n^k=1$. 
We shall show that $\{w_n\}_{n\in\mathbb{N}}$ is a Cauchy sequence with respect to the 2-norm. 
Let $\varepsilon>0$. Take a natural number $N$ such that 
$\sum_{j=n+1}^{m}\frac{1}{2^{j}}<\log_{r}{(1-\varepsilon)}$ for any $m> n\geq N$. 
Then we have 
\begin{align*}
\tau_{A}((w_{m}-w_{n})^*(w_{m}-w_{n}))
&=2-2\mathrm{Re} \tau_{A}(1\otimes \cdots \otimes 1\otimes u_{n+1}\otimes \cdots \otimes u_{m}
\otimes 1\cdots) \\
&=2- 2\prod_{j=n+1}^{m}r^{\frac{1}{2^j}}
=2- 2r^{\sum_{j=n+1}^{m}\frac{1}{2^j}}< 2\varepsilon
\end{align*}
for any $m> n\geq N$. 
Hence there exists a unitary element $V$ in 
$\pi_{\tau_{A}}(A)^{''}$ such that $\{w_n\}_{n\in\mathbb{N}}$ converges to $V$ in the strong-$^*$ 
topology. 
We have $V^{k}=1$ and 
\begin{align*}
\tilde{\tau}_{A}(V^l)= \lim_{n\to\infty}\tau_{A}(w_n^l)=\lim_{n\to \infty}\tau_A(u_1^l)\tau_{A}(u_2^l)\cdots
\tau_{A}(u_n^l)=\prod_{n=1}^{\infty}r^{\frac{1}{2^n}}=r
\end{align*}
for any $1\leq l\leq k-1$.
It is easy to see that $\mathrm{Ad}(V)$ induces an 
action $\alpha$ of $\mathbb{Z}_k$ on $A$. 
Note that we have 
$$
\alpha_{[l]} (x_1\otimes x_2\otimes \cdots \otimes x_n\otimes 1 \otimes \cdots)
=u_1^{l}x_1u_1^{*l}\otimes u_2^{l}x_2u_2^{*l}\otimes \cdots \otimes u_n^{l}x_nu_n^{*l}\otimes 1 
\otimes \cdots
$$
for any $1\leq l \leq k$. 
We shall show that $\alpha$ is outer. Let $l$ be a natural number with $1\leq l \leq k-1$. 
Since we have $\tau_{B}(p_{k,n})>\tau_{B}(p_{1,n})$ for any $n\in\mathbb{N}$, 
there exists a partial isometry 
$s_n$ in $B$ such that $s_ns_n^*=p_{1,n}$ and $s_n^*s_n \leq p_{k,n}$. 
Put 
$$
x_n:= 
\overbrace{1\otimes \cdots \otimes 1}^{n-1}\otimes s_n \otimes 1\cdots 
$$
Then $(x_n)_n$ is a central sequence in $A$ of norm one. 
Since we have $u_n^{l}s_nu_n^{*l}=\zeta_k^{l}s_n$, 
$\alpha_{[l]}(x_n)=\zeta_k^{l}x_n$
for any $n\in\mathbb{N}$.
Therefore $\alpha_{[l]}$ induces a non-trivial automorphism of $F(A)$. 
This implies $\alpha_{[l]}$ is not an inner automorphism of $A$. 
Consequently, $\alpha$ is outer. 
\end{proof}

The following theorem is the main result in this section. 

\begin{thm}
Let $\Gamma$ be a finite abelian group, and let $N$ be a subgroup of $\Gamma$. 
Suppose that $m$ is a  probability measure with full support on the set $P_{N}$ 
of minimal projections in $\mathbb{C}N$. 
Then there exist a simple unital monotracial 
AF algebra $A_{(\Gamma, N, m)}$ and an outer action 
$\alpha^{(\Gamma, N, m)}$ on $A_{(\Gamma, N, m)}$ such that the characteristic invariant of 
$\tilde{\alpha}^{(\Gamma, N, m)}$ is trivial, $N(\tilde{\alpha}^{(\Gamma, N, m)})=N$ and 
$i(\tilde{\alpha}^{(\Gamma, N, m)})=[m]$. 
\end{thm}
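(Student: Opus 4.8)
The plan is to combine Lemma \ref{lem:outer}, Lemma \ref{lem:jones}, and a tensor-product construction, and then read off the invariants. First I would use Lemma \ref{lem:jones} applied to the pair $(\Gamma, N)$ and the measure $m$ to obtain a simple unital monotracial AF algebra $A_0$, an action $\alpha^{(0)}$ of $\Gamma$ on $A_0$, and a unitary representation $v$ of $N$ on $A_0$ with $N(\alpha^{(0)})=N(\widetilde{\alpha^{(0)}})=N$, $\alpha^{(0)}_h=\mathrm{Ad}(v_h)$ and $\alpha^{(0)}_g(v_h)=v_h$ for $h\in N$, $g\in\Gamma$, and $\tau_{A_0}(\Phi_v(p))=m(p)$ for $p\in P_N$. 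The only defect of $\alpha^{(0)}$ is that its characteristic invariant might be nontrivial; but since it is realized by a genuine unitary representation $v$ of $N$ (not merely a projective one), $\lambda_{\widetilde{\alpha^{(0)}}}(g,h)$ is determined by $\widetilde{\alpha^{(0)}}_g(v_h)=v_h$, which forces $\lambda_{\widetilde{\alpha^{(0)}}}(g,h)=1$. So in fact the characteristic invariant of $\widetilde{\alpha^{(0)}}$ is already trivial. One must still check that $\alpha^{(0)}$ is outer in the C$^*$-sense, i.e. $N(\alpha^{(0)})=\{\iota\}$; but for $g\notin N$ this holds because $N(\widetilde{\alpha^{(0)}})=N$, while for $g\in N\setminus\{\iota\}$ we have $\alpha^{(0)}_g=\mathrm{Ad}(v_g)$, which is genuinely inner — so $\alpha^{(0)}$ itself need not be outer, only "outer relative to $N$."

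To fix outerness I would tensor with the building block of Lemma \ref{lem:outer}. More precisely, set $A_{(\Gamma,N,m)}:=A_0\otimes D$ where $D$ is a suitable simple unital monotracial AF algebra carrying an outer action $\mu$ of $\Gamma/N$ coming from the Rohlin action on a UHF algebra (as in the proof of Lemma \ref{lem:jones}), so that $\alpha^{(\Gamma,N,m)}:=\alpha^{(0)}\otimes(\mu\circ q)$, with $q\colon\Gamma\to\Gamma/N$ the quotient map, is an action of $\Gamma$ on $A_0\otimes D$. Then for $g\in N$, $\alpha^{(\Gamma,N,m)}_g=\mathrm{Ad}(v_g\otimes 1)$ is inner, while for $g\notin N$ the second tensor leg $\mu_{q(g)}$ acts outerly on $D$, so a standard central-sequence argument (as used in Lemma \ref{lem:outer}) shows $\alpha^{(\Gamma,N,m)}_g$ is not inner. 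Hence $N(\alpha^{(\Gamma,N,m)})=N(\widetilde{\alpha^{(\Gamma,N,m)}})=N$. Extending $v$ to $\hat v_h:=v_h\otimes 1$ gives a unitary representation of $N$ on $A_0\otimes D$ implementing $\alpha^{(\Gamma,N,m)}_h$, fixed by all of $\Gamma$, so the characteristic invariant of $\widetilde{\alpha^{(\Gamma,N,m)}}$ is trivial.

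It remains to compute $i(\widetilde{\alpha^{(\Gamma,N,m)}})$. Since $\tau_{A_0\otimes D}=\tau_{A_0}\otimes\tau_D$ and $\Phi_{\hat v}(p)=\Phi_v(p)\otimes 1$, we get $\tau_{A_0\otimes D}(\Phi_{\hat v}(p))=\tau_{A_0}(\Phi_v(p))=m(p)$ for every $p\in P_N$, i.e. $m_{\hat v}(\widetilde{\alpha^{(\Gamma,N,m)}})=m$, so $i(\widetilde{\alpha^{(\Gamma,N,m)}})=[m]$. Finally $A_0\otimes D$ is again a simple unital monotracial AF algebra (tensor products of simple unital AF algebras are simple unital AF, and the tensor of the unique traces is the unique trace). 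The main obstacle I anticipate is verifying the outerness claim for $g\notin N$ cleanly: one needs the second leg's action to see this through $F(A_0\otimes D)$, using that $F(A_0\otimes D)\supseteq F(D)$ and that $\mu_{q(g)}$ induces a nontrivial automorphism of $F(D)$ — the same mechanism as the end of the proof of Lemma \ref{lem:outer}, adapted to the quotient $\Gamma/N$ rather than $\mathbb{Z}_k$. (Alternatively, one could bypass the separate $D$ and instead arrange, already inside the construction of Lemma \ref{lem:jones}, that $\beta\otimes\mathrm{id}_C$ be outer off $N$ by replacing the Rohlin action $\mu^\Gamma|_N$ bookkeeping with a Rohlin action of $\Gamma$ whose restriction to $N$ is still of product type; but the tensor-with-$D$ route keeps the two concerns — realizing $[m]$ and realizing outerness — modular and is the cleaner write-up.)
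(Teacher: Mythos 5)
There is a genuine gap, and it is fatal to the construction as you describe it: the action $\alpha^{(\Gamma,N,m)}=\alpha^{(0)}\otimes(\mu\circ q)$ you propose is not outer. You yourself observe that for $h\in N\setminus\{\iota\}$ one has $\alpha^{(\Gamma,N,m)}_h=\mathrm{Ad}(v_h\otimes 1)$ with $v_h\otimes 1$ a unitary in the algebra, and you conclude $N(\alpha^{(\Gamma,N,m)})=N$. But the theorem asks for an \emph{outer} action, which in this paper means $N(\alpha^{(\Gamma,N,m)})=\{\iota\}$; what must equal $N$ is only $N(\tilde{\alpha}^{(\Gamma,N,m)})$, computed in the II$_1$ factor $\pi_{\tau}(A_{(\Gamma,N,m)})''$. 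The whole difficulty of the construction is to make $\alpha_h$, $h\in N\setminus\{\iota\}$, outer on the C$^*$-algebra while inner on its weak closure, and tensoring the $N$-part with the identity cannot achieve this. This is exactly what Lemma \ref{lem:outer} is for: it produces a unitary $V$ lying in $\pi_{\tau_B}(B)''$ but not in $B$, with $V^{k}=1$ and $\tilde{\tau}_B(V^{l})=r$, such that $\mathrm{Ad}(V)$ restricts to an \emph{outer} action of $\mathbb{Z}_k$ on $B$. The paper's proof writes $\Gamma=\bigoplus_{j=1}^{n}\mathbb{Z}_{k_j}$ and tensors the output of Lemma \ref{lem:jones} with one such block $(B_j,V_j)$ for each cyclic summand, so that $\alpha_g$ is outer on the C$^*$-level for every $g\neq\iota$, while for $h=([l_j])_j\in N$ the unitary $u_h\otimes\bigotimes_j V_j^{l_j}$ still implements $\tilde{\alpha}_h$ in the factor.

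Once that repair is made, your trace computation also breaks: the implementing unitaries are no longer of the form $v_h\otimes 1$ but $u_h\otimes\bigotimes_j V_j^{l_j}$ with $\tilde{\tau}_{B_j}(V_j^{l})=r<1$, so $\tilde{\tau}(\Phi_v(p_\eta))$ becomes a Fourier transform of $h\mapsto\tau_{B_0}(u_h)$ weighted by $r^{i}$ according to the number $i$ of nonzero coordinates of $h$. This is why the paper does not feed $m$ directly into Lemma \ref{lem:jones} but a pre-distorted measure $m'$ carrying compensating weights $1/r^{j}$; that $m'$ is a genuine probability measure only for $r$ sufficiently close to $1$, and this is precisely where the full-support hypothesis on $m$ is used. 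Your proposal never invokes full support, which is a further sign that a step is missing.
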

\begin{proof}
Since $\Gamma$ is a finite abelian group, we may assume that there exist a natural number $n$ and 
prime powers $k_1$,..., $k_{n}$ such that $\Gamma=\bigoplus_{j=1}^{n} \mathbb{Z}_{k_j}$. 
For any $0\leq i \leq n$, let 
$$
N_{i}:=\left\{([l_j])_{j=1}^n\in N\subseteq \bigoplus_{j=1}^{n} \mathbb{Z}_{k_j}\; |\; 
i=|\{j\in\{1,...,n\}\; |\; [l_j]\neq [0]\}|
\right\}.
$$
Then we have $N=\bigsqcup_{i=0}^n N_{i}$ (which is a disjoint union) and $N_i^{-1}=N_{i}$ for any 
$0\leq i \leq n$. We identify $P_{N}$ with 
$\left\{p_{\eta}=\frac{1}{|N|}\sum_{h\in N} \eta (h)h\; |\; \eta\in \hat{N}\right\}$. 
Since $m$ has full support, there exists a real number $r$ with $0<r<1$ such that 
$$
\frac{1}{|N|}\sum_{j=0}^{n}\frac{1}{r^j}\sum_{h\in N_j}\sum_{\eta^{\prime}\in \hat{N}}\eta(h)
\overline{\eta^{\prime}(h)}m(p_{\eta^{\prime}})\geq 0
$$
for any $\eta\in \hat{N}$. Indeed, we have 
\begin{align*}
\lim_{r\to 1-0}\frac{1}{|N|}\sum_{j=0}^{n}\frac{1}{r^j}\sum_{h\in N_j}\sum_{\eta^{\prime}\in 
\hat{N}}\eta(h)\overline{\eta^{\prime}(h)}m(p_{\eta^{\prime}})
&= \frac{1}{|N|}\sum_{h\in N}\sum_{\eta^{\prime}\in 
\hat{N}}\eta(h)\overline{\eta^{\prime}(h)}m(p_{\eta^{\prime}}) \\
&=\frac{1}{|N|}\sum_{\eta^{\prime}\in 
\hat{N}}m(p_{\eta^{\prime}})\sum_{h\in N}\eta\eta^{\prime-1}(h) \\
&= \frac{1}{|N|}\times m(p_{\eta})\times |N| 
=m(p_{\eta})>0
\end{align*}
and
\begin{align*}
\overline{\frac{1}{|N|}\sum_{j=0}^{n}\frac{1}{r^j}\sum_{h\in N_j}\sum_{\eta^{\prime}\in \hat{N}}\eta(h)
\overline{\eta^{\prime}(h)}m(p_{\eta^{\prime}})}
&=\frac{1}{|N|}\sum_{j=0}^{n}\frac{1}{r^j}\sum_{h\in N_j}\sum_{\eta^{\prime}\in \hat{N}}
\overline{\eta(h)}\eta^{\prime}(h)m(p_{\eta^{\prime}}) \\
&= \frac{1}{|N|}\sum_{j=0}^{n}\frac{1}{r^j}\sum_{h\in N_j}\sum_{\eta^{\prime}\in \hat{N}}
\eta(h^{-1})\overline{\eta^{\prime}(h^{-1})}m(p_{\eta^{\prime}}) \\
&= \frac{1}{|N|}\sum_{j=0}^{n}\frac{1}{r^j}\sum_{h\in N_j}\sum_{\eta^{\prime}\in \hat{N}}\eta(h)
\overline{\eta^{\prime}(h)}m(p_{\eta^{\prime}})
\end{align*}
for any $\eta\in \hat{N}$.
Hence a sufficiently large $r<1$ satisfies the property above. 
Define a map $m^{\prime}$ from $P_{N}$ to $\mathbb{R}_{+}$ by 
$$
m^{\prime}(p_{\eta})
=\frac{1}{|N|}\sum_{j=0}^{n}\frac{1}{r^j}\sum_{h\in N_j}\sum_{\eta^{\prime}\in \hat{N}}\eta(h)
\overline{\eta^{\prime}(h)}m(p_{\eta^{\prime}})
$$
for any $\eta\in \hat{N}$. 
Since we have 
\begin{align*}
\sum_{\eta\in \hat{N}} m^{\prime}(p_{\eta})
&=\frac{1}{|N|}\sum_{\eta\in \hat{N}}\sum_{j=0}^{n}\frac{1}{r^j}
\sum_{h\in N_j}\sum_{\eta^{\prime}\in \hat{N}}\eta(h)
\overline{\eta^{\prime}(h)}m(p_{\eta^{\prime}}) \\
&=\frac{1}{|N|}\sum_{j=0}^{n}\frac{1}{r^j}
\sum_{h\in N_j}\sum_{\eta^{\prime}\in \hat{N}}
\overline{\eta^{\prime}(h)}m(p_{\eta^{\prime}})\sum_{\eta\in \hat{N}}\eta(h) \\
&= \frac{1}{|N|}\sum_{\eta^{\prime}\in \hat{N}}
\overline{\eta^{\prime}(\iota)}m(p_{\eta^{\prime}})\times |N|=
\sum_{\eta^{\prime}\in \hat{N}}m(p_{\eta^{\prime}})
=1,
\end{align*}
$m^{\prime}$ is a probability measure on $P_{N}$. 

Lemma \ref{lem:jones} implies that there exist a simple unital monotracial AF algebra $B_0$, 
an action $\beta^{(0)}$ on $B_0$ of $\Gamma$ 
and a unitary representation $u$ of $N$ on $B_0$ such that $N(\beta)=N(\tilde{\beta})=N$, 
$\beta^{(0)}_h=\mathrm{Ad}(u_h)$, $\beta^{(0)}_g(u_h)=u_h$ for any $h\in N$ and $g\in \Gamma$ 
and $\tau_{B_0}(\Phi_{u}(p_{\eta}))=m^{\prime}(p_{\eta})$ for any $\eta\in \hat{N}$. 
Note that we have 
\begin{align*}
\sum_{\eta\in\hat{N}}\overline{\eta(h)}\Phi_{u}\left(p_{\eta}\right)
= \frac{1}{|N|}\sum_{\eta\in\hat{N}}\sum_{h^{\prime}\in N} \overline{\eta(h)}
\eta (h^{\prime})u_{h^{\prime}}
= \frac{1}{|N|}\sum_{h^{\prime}\in N}u_{h^{\prime}}\sum_{\eta\in\hat{N}}\eta(h^{-1}h^{\prime})=u_{h}
\end{align*}
for any $h\in N$. 
Hence if $h$ is an element in $N_{i}$ for some $0\leq i \leq n$, then 
\begin{align*}
\tau_{B_0}(u_{h})&=
\tau_{B_0}\left(\sum_{\eta\in\hat{N}}\overline{\eta(h)}\Phi_{u}\left(p_{\eta}\right)\right)
=\sum_{\eta\in \hat{N}}\overline{\eta(h)}m^{\prime}(p_{\eta}) \\
&= \frac{1}{|N|}\sum_{\eta\in\hat{N}}
\sum_{j=0}^{n}\frac{1}{r^j}\sum_{h^{\prime}\in N_j}\sum_{\eta^{\prime}\in \hat{N}}
\overline{\eta(h)}\eta(h^{\prime})\overline{\eta^{\prime}(h^{\prime})}m(p_{\eta^{\prime}}) \\
&= \frac{1}{|N|}\sum_{j=0}^{n}\frac{1}{r^j}\sum_{h^{\prime}\in N_j}\sum_{\eta^{\prime}\in \hat{N}}
\overline{\eta^{\prime}(h^{\prime})}m(p_{\eta^{\prime}})
\sum_{\eta\in\hat{N}}\eta(h^{-1}h^{\prime}) \\
&= \frac{1}{|N|} \times \frac{1}{r^i}\sum_{\eta^{\prime}\in \hat{N}}
\overline{\eta^{\prime}(h)}m(p_{\eta^{\prime}})\times |N|
=\frac{1}{r^i}\sum_{\eta^{\prime}\in \hat{N}}\overline{\eta^{\prime}(h)}m(p_{\eta^{\prime}}).
\end{align*}
For any natural number $j$ with $1\leq j\leq n$, 
there exist a simple unital monotracial 
AF algebra $B_j$ and a unitary element $V_j$ in 
$\pi_{\tau_{B_j}}(B_j)^{''}$ such that $\mathrm{Ad}(V_j)$ induces an 
outer action of $\mathbb{Z}_{k_j}$ on $B_j$, $V_{j}^{k_j}=1$ 
and $\tilde{\tau}_{B_j}(V_j^{l})=r$ for any $1\leq l \leq k_j-1$ 
by Lemma \ref{lem:outer}.  
Let $\beta^{(j)}$ be the induced action by $\mathrm{Ad}(V_j)$ on $B_j$ of $\mathbb{Z}_{k_j}$. 
Put 
$$
A_{(\Gamma, N, m)}:= \bigotimes_{j=0}^n B_j 
$$
and define an action $\alpha^{(\Gamma, N, m)}$ on $A_{(\Gamma, N, m)}$ of $\Gamma$ by 
$$
\alpha^{(\Gamma, N, m)}_{g}:=\beta^{(0)}_{g}\otimes 
\bigotimes_{j=1}^{n} \beta^{(j)}_{[l_j]}
$$
for any $g=([l_j])_{j=1}^{n}\in \Gamma=\bigoplus_{j=1}^{n} \mathbb{Z}_{k_j}$. 
Then $A_{(\Gamma, N, m)}$ is a simple unital monotracial AF algebra. 
We denote by $\tau$ the unique tracial state on $A_{(\Gamma, N, m)}$. 
It can be easily checked that $N(\alpha^{(\Gamma, N, m)})=\{\iota\}$ and 
$N(\tilde{\alpha}^{(\Gamma, N, m)})=N$ since we have $N(\beta^{(j)})=\{\iota \}$ for any 
$1\leq j\leq n$ and $N(\tilde{\beta}^{(0)})=N$. 

Define a unitary representation $v$ of $N$ on 
$\pi_{\tau}( A_{(\Gamma, N, m)})^{''}$ by 
$$
v_{h}:=\pi_{\tau_{B_0}}(u_{h})\otimes \bigotimes_{j=1}^nV_j^{l_j}
\in \pi_{\tau_{B_0}}(B_0)^{''}\otimes \bigotimes_{j=1}^n 
\pi_{\tau_{B_j}}(B_j)^{''} \cong \pi_{\tau}( A_{(\Gamma, N, m)})^{''}
$$ 
for any $h=([l_j])_{j=1}^n\in N$. 
Then $\tilde{\alpha}^{(\Gamma, N, m)}_{h}=\mathrm{Ad}(v_{h})$ and 
$\tilde{\alpha}^{(\Gamma, N, m)}_{g}(v_h)=v_h$ for any $h\in N$ and $g\in \Gamma$. 
This implies that the characteristic invariant of $\tilde{\alpha}^{(\Gamma, N, m)}$ is trivial. 
Since we have 
\begin{align*}
\tilde{\tau} (\Phi_{v}(p_{\eta}))
&=\frac{1}{|N|}\sum_{h\in N}\eta(h) \tilde{\tau} (v_h) \\
&
=\frac{1}{|N|}\sum_{h=([l_j])_{j=1}^n\in N}\eta(h) \tau_{B_0} (u_{h})
\times \prod_{j=1}^{n}\tilde{\tau}_{B_j}(V_j^{l_j}) \\
&= \frac{1}{|N|}\sum_{j=0}^n\sum_{h\in N_{j}}\eta(h)\times 
\frac{1}{r^j}\sum_{\eta^{\prime}\in \hat{N}}\overline{\eta^{\prime}(h)}m(p_{\eta^{\prime}})\times r^{j} \\
&= \frac{1}{|N|}\sum_{h\in N}\sum_{\eta^{\prime}\in \hat{N}}\eta\eta^{\prime-1}(h)m(p_{\eta^{\prime}}) \\
&= \frac{1}{|N|}\sum_{\eta^{\prime}\in \hat{N}}m(p_{\eta^{\prime}})\sum_{h\in N}\eta\eta^{\prime-1}(h)\\
&=\frac{1}{|N|}\times m(p_{\eta})\times |N| =m(p_{\eta}),
\end{align*}
$i(\tilde{\alpha}^{(\Gamma, N, m)})=[m]$. Consequently, the proof is complete. 
\end{proof}

The following corollary is an immediate consequence of
Proposition \ref{pro:realized-invariant}, Corollary \ref{main:cor} and the theorem above. 

\begin{cor}
Let $A$ be a simple separable nuclear monotracial C$^*$-algebra, and let $\alpha$ be an 
outer action of a finite abelian group $\Gamma$ on $A$. 
Assume that the characteristic invariant of $\tilde{\alpha}$ is trivial. Then there exists a 
probability measure $m$ with full support on $P_{N(\tilde{\alpha})}$ such that 
$\alpha\otimes\mathrm{id}_{\mathcal{W}}$ on $A\otimes \mathcal{W}$ is conjugate to 
$\alpha^{(\Gamma, N(\tilde{\alpha}), m)}\otimes \mathrm{id}_{\mathcal{W}}$ on 
$A_{(\Gamma, N(\tilde{\alpha}), m)}\otimes \mathcal{W}$. 
\end{cor}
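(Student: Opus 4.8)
The plan is simply to assemble the three cited results in the right order; there is no real content to add beyond bookkeeping. First I would record that the conjugacy invariant $i(\tilde{\alpha})$ by definition lives in $M(P_{N(\tilde{\alpha})})/\!\sim$, so we may choose a probability measure $m$ on $P_{N(\tilde{\alpha})}$ that represents it, i.e.\ $i(\tilde{\alpha})=[m]$. Since $\alpha$ is an outer action of a finite abelian group on a simple C$^*$-algebra and the characteristic invariant of $\tilde{\alpha}$ is trivial by hypothesis, Proposition \ref{pro:realized-invariant} applies to $(A,\alpha)$ and shows that any such representative $m$ automatically has full support. This is the only point at which simplicity and outerness of $\alpha$ enter: full support of $m$ is exactly the input the model construction requires, and here it is forced rather than assumed.

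Next I would feed the triple $(\Gamma, N(\tilde{\alpha}), m)$ into the model-action theorem of Section~4. Because $m$ has full support, that theorem produces a simple unital monotracial AF algebra $A_{(\Gamma, N(\tilde{\alpha}), m)}$ together with an outer action $\alpha^{(\Gamma, N(\tilde{\alpha}), m)}$ such that the characteristic invariant of $\tilde{\alpha}^{(\Gamma, N(\tilde{\alpha}), m)}$ is trivial, $N(\tilde{\alpha}^{(\Gamma, N(\tilde{\alpha}), m)})=N(\tilde{\alpha})$, and $i(\tilde{\alpha}^{(\Gamma, N(\tilde{\alpha}), m)})=[m]=i(\tilde{\alpha})$. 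Being AF, $A_{(\Gamma, N(\tilde{\alpha}), m)}$ is in particular simple separable nuclear monotracial, hence an admissible algebra for Corollary \ref{main:cor}.

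Finally I would apply Corollary \ref{main:cor}(ii) to the two pairs $(A,\alpha)$ and $(A_{(\Gamma, N(\tilde{\alpha}), m)}, \alpha^{(\Gamma, N(\tilde{\alpha}), m)})$. Both actions have trivial characteristic invariant on the associated injective II$_1$ factor, their normal subgroups $N(\tilde{\cdot})$ coincide, and their inner invariants $i(\tilde{\cdot})$ coincide; therefore Corollary \ref{main:cor}(ii) yields that $\alpha\otimes\mathrm{id}_{\mathcal{W}}$ on $A\otimes\mathcal{W}$ and $\alpha^{(\Gamma, N(\tilde{\alpha}), m)}\otimes\mathrm{id}_{\mathcal{W}}$ on $A_{(\Gamma, N(\tilde{\alpha}), m)}\otimes\mathcal{W}$ are conjugate, which is the assertion.

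The only thing to watch is the logical ordering: one must extract $m$ from $i(\tilde{\alpha})$ \emph{before} invoking the model-action theorem, and one must know $m$ has full support in order to do so; Proposition \ref{pro:realized-invariant} supplies exactly this, so no genuine obstacle remains. I do not expect any hard step.
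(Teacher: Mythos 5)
Your proposal is correct and is exactly the argument the paper intends: the paper states this corollary as an immediate consequence of Proposition \ref{pro:realized-invariant}, the model-action theorem, and Corollary \ref{main:cor}(ii), assembled in precisely the order you describe (choose a representative $m$ of $i(\tilde{\alpha})$, note it has full support, build the model action, then invoke the conjugacy classification). No gaps.
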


\section*{Acknowledgments}
The author would like to thank Eusebio Gardella for pointing out a misleading terminology.

\end{document}